\def\refer#1{~\ref{#1}}
\def\refeq#1{~(\ref{#1})}
\def\ccite#1{~\cite{#1}}
\def\inte#1{
\displaystyle\mathop{#1\kern0pt}^\circ }
\def\sumetage#1#2{\sum_{\substack{{#1}\\{#2}}}}
\let\b=\beta
\let\lam=\lambda
\let\wt=\widetilde
\def\cB{{\mathcal B}}
\def\cG{{\mathcal G}}
\def\cI{{\mathcal I}}
\def\cJ{{\mathcal J}}
\def\cL{{\mathcal L}}
\def\cR{{\mathcal R}}
\def\cS{{\mathcal S}}
\def\cU{{\mathcal U}}
\def\cW{{\mathcal W}}
\def\virgp{\raise 2pt\hbox{,}}
\def\cdotpv{\raise 2pt\hbox{;}}
\def\eqdefa{\buildrel\hbox{\footnotesize def}\over =}
\def\im {\mathop{\rm Im}\nolimits}
\def\C{\mathop{\mathbb C\kern 0pt}\nolimits}
\def\DD{\mathop{\mathbb D\kern 0pt}\nolimits}
\def\EE{\mathop{{\mathbb E \kern 0pt}}\nolimits}
\def\K{\mathop{\mathbb K\kern 0pt}\nolimits}
\def\N{\mathop{\mathbb N\kern 0pt}\nolimits}
\def\Q{\mathop{\mathbb Q\kern 0pt}\nolimits}
\def\R{{\mathop{\mathbb R\kern 0pt}\nolimits}}
\def\SS{\mathop{\mathbb S\kern 0pt}\nolimits}
\def\ZZ{\mathop{\mathbb Z\kern 0pt}\nolimits}
\def\TT{\mathop{\mathbb T\kern 0pt}\nolimits}
\def\P{\mathop{\mathbb P\kern 0pt}\nolimits}
\def \H{{\mathop {\mathbb H\kern 0pt}\nolimits}}
\newcommand{\ds}{\displaystyle}
\DeclareMathOperator{\tr}{Tr}
\DeclareMathOperator{\re}{Re}
\newcommand{\beq}{\begin{equation}}
\newcommand{\eeq}{\end{equation}}
\newcommand{\ben}{\begin{eqnarray}}
\newcommand{\een}{\end{eqnarray}}
\newcommand{\beno}{\begin{eqnarray*}}
\newcommand{\eeno}{\end{eqnarray*}}
\newcommand{\bqs}{\begin{equation*}}
\newcommand{\eqs}{\end{equation*}}
\newcommand{\andf}{\quad\hbox{and}\quad}
\newcommand{\with}{\quad\hbox{with}\quad}
\def\equivH#1 {\buildrel\hbox{\tiny {$#1$}}\over \equiv}
\def\simH#1 {\buildrel\hbox{\footnotesize {$#1$}}\over \sim}
\newtheorem{definition}{Definition}[section]
\newtheorem{theorem}{Theorem} 
\newtheorem{lemma}{Lemma}[section]
\newtheorem{remark}{Remark}[section]
\newtheorem{cor}{Corollary}[section]
\newtheorem{proposition}{Proposition}[section]
\numberwithin{equation}{section}
\begin{document}
\title[The derivative nonlinear Schr\"odinger equation]
{Global well-posedness for the derivative nonlinear Schr\"odinger equation}

\author[H. Bahouri]{Hajer Bahouri}
\address[H. Bahouri]
{CNRS  \&  Sorbonne Universit\'e  \\
 Laboratoire Jacques-Louis Lions (LJLL) UMR  7598 \\
4, Place Jussieu\\
75005 Paris, France.}
\email{hajer.bahouri@ljll.math.upmc.fr}
\author[G. Perelman]{Galina Perelman}
\address[G. Perelman]%
{Laboratoire d'Analyse et de Math{\'e}matiques Appliqu{\'e}es UMR 8050 \\
Universit\'e Paris-Est  Cr{\'e}teil\\
61, avenue du G{\'e}n{\'e}ral de Gaulle\\
94010 Cr{\'e}teil Cedex, France}
\email{galina.perelman@u-pec.fr}

\date{\today}

\begin{abstract} This paper is dedicated to the study of the derivative nonlinear Schr\"odinger equation on the real line. The local well-posedness of this equation in the Sobolev spaces $H^s(\R)$  is well understood since a couple of decades, while the global well-posedness is not completely settled.  For the latter issue, the best known results up-to-date concern either Cauchy data   in $H^{\frac 1 2}(\R)$ with mass strictly less than $4\pi$ or general initial conditions in the weighted   Sobolev space  $H^{2, 2}(\R)$.  In this article, we prove that the derivative nonlinear Schr\"odinger equation is globally well-posed for general Cauchy data in $H^{\frac 1 2}(\R)$ and that furthermore the $H^{\frac12}$ norm of the solutions remains globally bounded in time. 
One should recall that for $H^{s}(\R)$, with $s < 1 / 2 $,  the associated Cauchy problem is ill-posed in the  
sense that uniform continuity with respect to the initial data fails. Thus, our result closes the discussion in the setting of the Sobolev spaces $H^s$. The proof is achieved by combining the profile decomposition techniques with the integrability structure of the equation.

\end{abstract}

\maketitle

\tableofcontents

\noindent {\sl Keywords:}  Derivative nonlinear Schr\"odinger equation, global well-posedness, integrable systems,  inverse scattering transform, B\"acklund transformation,    profile decompositions.

\vskip 0.2cm

\noindent {\sl AMS Subject Classification (2000):} 43A30, 43A80.

\section{Introduction}\label {intro}
\setcounter{equation}{0}
 This paper aims to investigating global well-posedness  for the derivative nonlinear Schr\"odinger equation (DNLS) on the real line: 
\begin{equation}\label {eq:DNLS} 
iu_t +u_{xx} =  \pm  i \partial_x(|u|^2 u), \quad x\in \R,
\end{equation}
with initial conditions
\begin{equation}\label{id}u|_{t=0} = u_0 \in H^{s}(\R), \,\,\, s\geq 1/2.
\end{equation}

The transformation $u(t,x)\rightarrow u(t,-x)$ maps solutions of\refeq{eq:DNLS}
 with sign $-$ to solutions of\refeq{eq:DNLS}
 with sign $+\cdot$   In what follows,    we shall   fix  the sign $-$ in \refeq{eq:DNLS}.

The   DNLS equation was derived by Mio-Ogino-Minami-Takeda and Mjolhus in\ccite{MOMT, mjohus} for studying the one-dimensional compressible magneto-hydrodynamic  equation in the presence of the Hall effect and the propagation of circular polarized nonlinear Alfv\'en waves in  magnetized plasmas\footnote{The  DNLS equation also appears   as a model for ultrashort
optical pulses\ccite{Moses}. For an outline on physical applications of this  equation, one can  consult\ccite{Champeaux, Sulem} and the references therein.}.

The equation \eqref{eq:DNLS} is known to be completely integrable, and to admit an infinite number of conservation laws including
the conservation of mass, momentum and  energy:
\begin{equation}
\label{mass} M(u)\eqdefa\int_{\R}|u|^2  dx \, ,
\end{equation}
\begin{equation}
\label{momentum}  P(u)\eqdefa  {\rm Im} \int_{\R}\overline {u} u_x  dx+ \frac 1 2 \int_{\R}|u|^4 dx \virgp
\end{equation}
\begin{equation}
\label{energy} E(u)\eqdefa \int_{\R} \Big(|u_x |^2 -\frac 3 2 {\rm Im} (|u|^2 u \overline {u}_x) + \frac 1 2  |u|^6\Big)dx \cdotp \end{equation}

The problem of local and eventually global  well-posedness for the DNLS equation has received a lot of attention over the last twenty years.
The local well-posedness   is  fully understood in the scale of the Sobolev spaces $H^s(\R)$:
combining a gauge
transformation with the Fourier restriction method,  Takaoka
 proved in\ccite{HT}  that  the corresponding  Cauchy problem is locally well-posed 
in~$H^s(\R)$ 
for~$s \geq   1 / 2$,  improving   the earlier $H^1(\R)$-result of Hayashi and Ozawa \ccite{Ozawa0}.
The Takaoka's result  is optimal accordingly to the works\ccite{Biagioni, HT2} where it was shown that  the Cauchy problem\refeq{eq:DNLS}-\eqref{id}  
 is ill-posed in~$H^s(\R)$ for $s <    1 / 2$, in the sense that uniform continuity with respect to the initial conditions fails.   
 Note that DNLS
 is $L^2$-critical being  invariant under the scaling: 
\begin{equation}
\label{scaling}
   u(t,x)\longrightarrow u_\mu(t,x) \eqdefa \sqrt{\mu} u(\mu^2 t, \mu x), \quad \mu >0 \, .
\end{equation}
The 1/2 derivative gap in the local well-posedness can be closed by leaving the $H^s(\R)$-scale and considering 
more general functional spaces, see for instance\ccite{AG}  and the references therein.

Concerning the question of global well-posedness,  Hayashi and Ozawa \ccite{HO} proved the global existence for $H^1$ solutions with initial data satisfying $\|u_0\|_{L^2(\R)}< \sqrt{2\pi}.$ By the sharp Gagliardo-Nirenberg inequality  \ccite{Weinstein}
\begin{equation}
\label{sharpin0}\|f\|^6_{L^6(\R)} \leq \frac{4}{\pi^2}\|f\|^{4}_{L^2(\R)} \|f_x\|^{2}_{L^2(\R)} \, , \, \, \forall f \in  H^1(\R)\, ,\end{equation}
this smallness assumption allows to control the
$H^1$-norm of the solution by its energy and  the mass\footnote{Note  that, under the gauge transformation 
$\ds u=ve^{-\frac{3i}{4}\int_{-\infty}^x|v(y)|^2}$, the energy \eqref{energy} reduces to the energy of the focusing quintic nonlinear Schr\"odinger equation
$iv_t +v_{xx} = -\frac3{16}|v|^4v$.}. This result was extended to $H^s$ data with $s>1/2$ by Colliander-Keel-Staffilani-Takaoka-Tao\ccite{CKSTT}. More recently, Wu\ccite{W} and Guo-Wu \ccite{Guo} increased the upper bound $\sqrt{2\pi}$
to $\sqrt{4\pi}$ respectively for $H^1$ and $H^{\frac 1 2}$ solutions. In the  $H^1$ setting, the result follows from the mass, momentum and energy conservation combined with the following Gagliardo-Nirenberg inequality\ccite{G} \begin{equation}
\label{sharpin}\|f\|_{L^6(\R)} \leq C_{GN} \|f\|^{\frac 8 9}_{L^4(\R)} \|f_x\|^{\frac 1 9}_{L^2(\R)} \, , \, \, \forall f \in L^4(\R)\cap \dot H^1(\R)\, ,\end{equation}
where the optimal constant $C_{GN}$ is given by
$C_{GN} 
= 3^{\frac 1 6} (2\pi)^{- \frac 1 9}$. 
The extension to $H^{\frac 1 2}$-solutions was achieved 
by using  the I-method.

The both bounds $\sqrt{2\pi}$ and $\sqrt{4\pi}$ are related  to the $L^2$-norm of solitary wave solutions of the DNLS equation.
These solutions can be written in the explicit form:
\begin{equation}
\label{particularbright} \ds u_{E,c}(t,x) = e^{ i \omega t+ i\frac c 2 x - \frac 3 4 i  \int^{x-ct}_{-\infty} |\varphi_{E, c}(s)|^2 ds } \varphi_{E, c}(x-ct ) \, ,\end{equation}
where  $E>0$, $c \in  \R$, $\omega=E-\frac  {c^2} 4 $ and
$$  \varphi_{E, c}(y)= \frac {2 \sqrt{2 E} } {(c^2+4E)^ {\frac 1 4}}\, \frac 1 {\sqrt{\cosh (2 \sqrt{E} y)- \frac c {\sqrt{c^2+4 E}}}} \virgp$$
is the unique positive even exponentially decaying solution of
\begin{equation}
\label{eqsolitons} - \varphi_{yy} + E \varphi+ \frac {c} 2 \varphi^3- \frac 3 {16}   \varphi^5= 0\, \cdotp\end{equation}
These solutions are usually referred to  as  the bright solitons\footnote{Their  orbital stability   was studied in\ccite{Colin, Guo1, KW}.}.
In the limiting case $E=0$, $c>0$, the profile $\varphi_{E,c}$ reduces to
$$\varphi_{0, c}(y)=\frac{2\sqrt{c}}{\sqrt{1+c^2y^2}}\virgp$$
solving
\begin{equation}
\label{eqsolitons} - \varphi_{yy} +  \frac {c} 2 \varphi^3- \frac 3 {16}   \varphi^5= 0\, \cdotp\end{equation}
The corresponding solution $u_{0, c}$ is  called the algebraic soliton. 
One can easily check that \begin{equation}
\label{massbright}  M(u_{E,c})= 8 \arctan \sqrt{\frac {\sqrt{c^2+4 E}+c} {\sqrt{c^2+4 E}-c}} \leq 4\pi\,  \cdotp\end{equation}
The values $2\pi$ and $4\pi$ correspond  to the mass of the static bright solitons $u_{E,0}$ and the algebraic solitons 
$u_{0,c}$, their profiles $ \varphi_{E,0}$ and $\varphi_{0,c}$ being the extremals\footnote{the only extremals up to the symmetries of the DNLS equation.} of the Gagiardo-Nirenberg inequalities \eqref{sharpin0}
and \eqref{sharpin} respectively.

\smallskip There is also a number of works \ccite{Sulem1, Sulem, Sulem0,PSS, PSS2} where the global well-posedness of the DNLS equation was studied by means of the inverse scattering 
techniques.
The corresponding results get rid of the smallness assumption on the mass, but require more regularity and decay on the initial data.
The most definite result is due to 
Jenkins, Liu, Perry and Sulem who proved in \ccite{Sulem1} that the Cauchy problem for the DNLS equation is globally well-posed for any 
initial  data $u_0$ in~$H^{2, 2}(\R)=\big\{f  \in H^2(\R)  : x^2 f  \in L^2(\R) \big\}\cdotp$ Let us also mention the work of Pelinovsky-Saalmann-Shimabukuro\ccite{PSS} that gives the global well-posedness for generic initial data in~$H^2(\R)\cap H^{1,1}(\R)$.

\medskip
The purpose of this paper is to prove 
the global well-posedness  of the  DNLS equation  for general   initial data in $H^{s}$, $s\geq 1/2$. Our main result    is the following:
 \begin{theorem}
\label{Mainth}
{\sl For any initial data $u_0\in H^{\frac 1 2}(\R)$, the Cauchy problem \eqref{eq:DNLS}-\eqref{id} is globally well-posed,
and the corresponding solution $u$ satisfies
\begin{equation}\label{b}
\sup\limits_{t\in \R}\|u(t)\|_{H^{\frac 1 2}(\R)}<+\infty.\end{equation}}
\end{theorem}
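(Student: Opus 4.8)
\medskip
\noindent\textbf{Proof proposal.}
The plan is to reduce Theorem~\ref{Mainth} to an a priori estimate on the $H^{\frac12}$-norm and then to lift the known small-mass global results to arbitrary mass using the integrable structure of~\eqref{eq:DNLS}. By Takaoka's local theory, any $u_0\in H^{\frac12}(\R)$ generates a maximal solution on an interval $(T_-,T_+)$ obeying the blow-up alternative: if $T_+<+\infty$ then $\|u(t)\|_{H^{\frac12}(\R)}\to+\infty$ as $t\uparrow T_+$, and likewise at $T_-$. Therefore the global well-posedness together with the uniform bound~\eqref{b} will follow from an estimate
\begin{equation*}
\|u(t)\|_{H^{\frac12}(\R)}\le\Phi\big(M(u_0),\|u_0\|_{\dot H^{\frac12}(\R)}\big),\qquad t\in(T_-,T_+),
\end{equation*}
for some nondecreasing $\Phi$, after which uniqueness and continuous dependence are obtained by patching the local theory along this bound. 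Since the mass $M$ is conserved and this estimate is already known, through the works of Wu and Guo--Wu, for $M(u_0)<4\pi$, everything comes down to the regime $M(u_0)\ge4\pi$.

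To handle it I would bring in a B\"acklund transformation $\mathcal B$ associated with the Kaup--Newell Lax pair of~\eqref{eq:DNLS}: a map built by solving an auxiliary linear ODE with a well-chosen spectral parameter, which conjugates the DNLS flow with itself, shifts the mass by a fixed amount --- $M(\mathcal B u)=M(u)-4\pi$ in the relevant normalization --- and is well defined and bounded on mass-bounded subsets of $H^{\frac12}(\R)$. Together with a regularized conserved functional at the $H^{\frac12}$ level coming from the integrable hierarchy (which secures the uniform-in-time, rather than merely finite-on-compact-intervals, character of~\eqref{b}), this is the integrability input of the proof: iterating $\mathcal B$ finitely many times, depending only on $M(u_0)$, turns $u$ into a DNLS solution of mass $<4\pi$ controlled by the Wu--Guo--Wu bound, and applying $\mathcal B^{-1}$ as many times recovers $u$; as all intermediate masses stay $\le M(u_0)$, this produces $\Phi$ once the $H^{\frac12}$-boundedness of $\mathcal B^{\pm1}$ on mass-bounded sets is in hand.

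It remains to establish that boundedness --- along with the global solvability of the ODE defining $\mathcal B$ for \emph{arbitrary} $H^{\frac12}$ data, not just data whose scattering problem carries the relevant eigenvalue, and the exact $4\pi$ mass shift --- and this is where the profile decomposition enters. I would argue by a minimal-mass contradiction: let $M^\star$ be the infimum of the masses for which $\Phi$ fails to exist; since it exists below $4\pi$, one would have $M^\star\ge4\pi$. From a near-optimal blow-up sequence at mass $M^\star$, rescaling by the $L^2$-critical scaling~\eqref{scaling} to normalize the $\dot H^{\frac12}$-norm, extract a linear profile decomposition in $H^{\frac12}(\R)$ adapted to translations and to~\eqref{scaling}; the $L^2$-orthogonality of the profiles forces the strict sub-additivity of their masses, so either the decomposition has several profiles or a nontrivial remainder --- in which case every piece has mass $<M^\star$, hence a bounded DNLS evolution by definition of $M^\star$, and a nonlinear-superposition/stability argument for DNLS in $H^{\frac12}$ propagates this to the whole sequence, a contradiction --- or it reduces to a single concentrating bubble, a compact (minimal-mass) object of mass $M^\star\ge4\pi$, to which one application of $\mathcal B$ assigns mass $M^\star-4\pi<M^\star$, whose evolution is therefore bounded, and conjugating back through the boundedness of $\mathcal B^{-1}$ again yields a contradiction; hence $M^\star=+\infty$. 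The same profile-decomposition input disposes of the loss of compactness in the continuation argument, and the statement for $H^s$, $s>\frac12$, then follows by persistence of regularity. \textbf{The main obstacle} is the rigorous construction of $\mathcal B$ at the critical regularity $H^{\frac12}$ --- its global definition, its $H^{\frac12}$-continuity uniformly on mass-bounded sets, and the precise $4\pi$ mass shift --- realized so as to mesh with the profile-decomposition induction without the bound under construction reappearing on the right-hand side.
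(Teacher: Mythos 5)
Your proposal does not follow the paper's route, and its central mechanism fails. You treat the B\"acklund transformation $\mathcal B$ as a universal mass-reduction operator satisfying $M(\mathcal B u)=M(u)-4\pi$ on arbitrary $H^{\frac12}$ data, so that finitely many iterations push any solution into the small-mass regime of Wu and Guo--Wu. This is not how the transformation works. The B\"acklund transformation of the Kaup--Newell problem is attached to an \emph{eigenvalue} $\lambda_1\in\C_{++}$ of the spectral problem $L_u(\lambda)\eta=0$ together with its $L^2$ eigenfunction $\eta$; the resulting mass shift is $8\arg\lambda_1$ (equivalently $4\arg\zeta_1$ with $\zeta_1=\lambda_1^2$), which equals $4\pi$ only in the degenerate limit $\arg\zeta_1\to\pi$. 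For data whose transmission coefficient $a_u$ has no zero in $\Omega_+$ --- which is the generic situation, and in particular the situation you would need to handle for arbitrary large-mass data --- there is simply no eigenvalue to remove and no mass to shed: $\mathcal B$ is not defined, and no regularization will produce a $4\pi$ drop. You flag "global solvability of the ODE defining $\mathcal B$ for arbitrary data, not just data whose scattering problem carries the relevant eigenvalue" as a technical obstacle, but it is a structural obstruction that sinks the reduction to the small-mass case. A secondary gap is the "nonlinear-superposition/stability argument for DNLS in $H^{\frac12}$" invoked in your minimal-mass induction: at critical-type regularity this long-time perturbation theory is itself a major unproven ingredient, and nothing in the integrable structure you set up supplies it.

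The paper's actual argument avoids both difficulties by never evolving profiles and never reducing to small mass. It argues by contradiction: if $\|u(t_n)\|_{H^{1/2}}\to\infty$, the rescaled sequence $U_n$ admits a profile decomposition whose profiles are shown --- using the conservation of $a_u(\lambda)$, realized as a regularized Fredholm determinant well defined for $L^2$ potentials, together with a positivity/rigidity lemma for $\varphi_u=\im\ln\tilde a_u$ on $i\R_+$ --- to lie in the exceptional set $\mathcal A=\{\tilde a_V\equiv 1\}$, with at most $\|u_0\|_{L^2}^2/(4\pi)$ of them. The B\"acklund transformation enters only at the very end, and only to remove the finitely many zeros of $\tilde a_{U_n^{(k)}}$ in a fixed angular sector $\{\theta_0\le\arg\zeta<\pi\}$ (for a regularized approximation $U_n^{(k)}$ where those zeros genuinely exist); a counting argument via the argument principle then forces such a zero with $\re\zeta$ bounded away from $0$ from below by a negative constant, and the scaling identity $\tilde a_{u_\mu}(\zeta)=\tilde a_u(\zeta/\mu)$ transports it to a zero of $\tilde a_{u_0}$ with real part $\to-\infty$, contradicting the lower bound on real parts of zeros valid for $H^{\frac12}$ potentials. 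If you want to salvage your write-up, the piece to keep is the profile decomposition of the rescaled sequence; the piece to discard is the mass-induction scaffold built on the fictitious $4\pi$ shift.
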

 \begin{remark} 
{\sl \label{commentth1}
Combining the conservation laws with the $H^{\frac 1 2}$ bound \eqref{b}, it is possible to show that  if the initial datum is in $H^s(\R)$
for some $s>1/2$, 
then the  
$H^s$-norm of the solution remains globally bounded in time as well.
We are planning to address this issue in a subsequent paper.}
\end{remark}

 The proof of  Theorem \ref{Mainth}  relies heavily on the complete integrability  of the DNLS equation, but avoids a direct use of the inverse scattering transform that requires a localization of initial data and breaks down for the solutions we are considering in this article. Instead, we exploit  as much as possible the conservation quantities, namely 
 the conservation of the transmission coefficient of the corresponding spectral problem that remains well defined for $L^2$ data, as soon as we stay away from the spectrum,
the property that has been already extensively used  in the works of  Killip-Visan, Killip-Visan-Zhang  and Koch-Tataru \ccite{killip0, killip, KT}  on the low regularity solutions of the cubic NLS and KdV equations
on the real line\footnote{See also the recent paper of Klaus-Schippa\ccite{ks},  where this property  has been used to 
obtain   low regularity a priori estimates for small mass solutions of the DNLS equation. }.

\medskip The structure of the paper  is as follows. Section~\ref{preliminarystatement2} is devoted to the preliminary  results related to the integrable structure of the DNLS equation, that will be needed in the proof of Theorem\refer{Mainth}. In the first subsection, we describe the zero curvature formulation of the DNLS equation and introduce the main elements of the
inverse scattering analysis following the founding paper of Kaup-Newell\ccite{KN}. In the second subsection,   we study
 the properties
of the corresponding spectral problem in the case of $H^{\frac 1 2}$ potentials.  The last subsection is devoted to the B\"acklund transformation and its basic properties. 
In Section\refer{proofmainth}, we establish Theorem\refer{Mainth}. We argue by contradiction.
In the subsection \refer{preliminarystatement1}, combining the profile decomposition techniques with the integrability structure of the equation,  we show that if the theorem fails, it  would imply the existence of solutions of a very special structure. We then use the B\"acklund trasformation to show that in fact, such solutions cannot exist. This is done in the subsection\refer{End}.
There are  also two  appendices:  the  first one contains  a short introduction to the regularized determinants that play an important role in Section~\ref{preliminarystatement2}, and   the second  one is dedicated to the proof of a technical estimate related to the B\"acklund transformation. 
\smallskip

Throughout this article, we shall use the following convention
for the Fourier transform: 
\begin{equation*}
\hat f(\xi)= \frac 1 {\sqrt {2 \pi}} \int_\R e^ {-i x \xi}f(x) dx.
\end{equation*}
We shall  designate by~$\mathscr{C}_n$ the set of bounded operators~$A$ on~$L^2(\R, \C^2)$ such that $|A|^n$ is of trace-class, endowed with the norm $\|A\|_n \eqdefa \big[{ \rm Tr}  \big(|A|^n\big)\big] ^{\frac 1 n}$.

\smallskip  Finally, we mention that the letter $C$ will be used to denote  universal constants
which may vary from line to line. If we need the implied constant to depend on parameters, we shall indicate this by subscripts.
We  also use the notation $A\lesssim B$ to
denote the bound of the form $A\leq C B$,   and $A \lesssim_\alpha B$ 
for $A\leq C_\alpha B$, where $C_\alpha$ depends only  on $\alpha$.
For simplicity, we shall  still denote by~$(u_n)$ any
subsequence of~$(u_n)$.

\section{Preliminary results in connexion with the integrability structure of  the DNLS equation}\label {preliminarystatement2} 
\subsection{An overview  of the scattering transform}
In this subsection we recall briefly some basic facts about the inverse scattering transform for the DNLS equation, limiting ourselves to the 
case of Schwartz class solutions. The details can be found in\ccite{Ab, Sulem1, Sulem, Sulem0, KN, Lee,  PSS2, T2}.

As was shown by Kaup-Newell\ccite{KN}, the DNLS equation arises as compatibility condition of the following linear system
 \begin{equation}
\label{system}\begin{array}{c}
\partial_x \psi= \cU(\lam)  \psi,\\
\partial_t \psi= \Upsilon(\lam)  \psi\,,
\end{array}
 \end{equation} 
with
\begin{eqnarray*}\cU(\lam) &= &-i \sigma_3(\lam^2 + i\lam U), \quad U=\left(
\begin{array}{ccccccccc}
0 &u \\
\overline {u} &0
\end{array}
\right), \\ \Upsilon(\lam) &= &-i (2\lam^4- \lam^2\, |u|^2) \sigma_3 + \left(
\begin{array}{ccccccccc}
0 & 2 \lam^3u - \lam |u|^2 u + i \lam u_x\\ 

-2 \lam^3\overline {u}+  \lam |u|^2 \overline {u} + i \lam \overline {u_x}&0
\end{array}
\right) \, ,\end{eqnarray*}
where  $\lam\in\C$  is a $(t,x)$-independent spectral parameter, $\psi$  is a $\C^2$-valued function of $(t, x, \lam)$, and~$\sigma_3$   the Pauli matrix given by 
$\sigma_3= 
\left(
\begin{array}{ccccccccc}
1 &0 \\
0 &-1
\end{array}
\right)\cdot$  
Namely, $u$ satisfies the DNLS equation if and only if 
$$\frac{\partial \cU}{\partial t}-\frac{\partial \Upsilon}{\partial x}+ [\cU, \Upsilon]=0,$$
which is referred to  in the literature as the zero curvature representation of DNLS.

\smallskip
The scattering transform associated with the DNLS  equation is defined via the first equation of \eqref{system} that we rewrite in the form
\begin{equation}\label{sp}
L_u(\lambda)\psi=0,
\end{equation}
with $L_u(\lambda)=i\sigma_3\partial_x-\lambda^2-i\lambda U$. 
Given $u\in \mathcal{S}(\R)$ (for the moment we ignore the time dependence), for any $\lambda \in \C$ with $\im \lambda^2\geq0$, 
there are unique solutions  $\psi_1^-(x, \lambda)$, $\psi_2^+(x, \lambda)$ to \eqref{sp}, the so-called Jost solutions,  satisfying
\begin{eqnarray*}
\psi_1^-(x, \lambda)&=& e^{ -i \lambda^2 x} \left[ \left(
\begin{array}{ccccccccc}
1  \\
0 
\end{array}
\right) + \circ (1)\right], \quad \mbox{as} \quad x \to - \infty \, ,\\
\psi_2^+(x, \lambda) &= & e^{ i \lambda^2 x} \, \, \, \left[ \left(
\begin{array}{ccccccccc}
0  \\
1 
\end{array}
\right) + \circ (1)\right], \quad \mbox{as} \quad x \to + \infty\,.
\end{eqnarray*} 
The solutions $\psi_1^-$, $\psi_2^+$ are holomorphic functions of $\lambda$ on $\Omega_+=\{\lambda\in \C: \,\ \im \lambda^2>0\}$,
$C^\infty$ up to the boundary.
Similarly, for $\lambda \in \C$ with $\im \lambda^2\leq 0$, there are 
unique solutions  $\psi_2^-(x, \lambda)$, $\psi_1^+(x, \lambda)$ to \eqref{sp} satisfying
\begin{eqnarray*}
\psi_2^-(x, \lambda)&=& e^{ i \lambda^2 x} \, \, \,\left[ \left(
\begin{array}{ccccccccc}
0  \\
1
\end{array}
\right) + \circ (1)\right], \quad \mbox{as} \quad x \to - \infty \, ,\\
\psi_1^+(x, \lambda) &= & e^{- i \lambda^2 x} \left[ \left(
\begin{array}{ccccccccc}
1  \\
0
\end{array}
\right) + \circ (1)\right], \quad \mbox{as} \quad x \to + \infty\,.
\end{eqnarray*} 
For $\lambda\in \R\cup i\R$,  this gives two pairs of linearly independent solutions: $\psi_1^-, \psi_2^-$ and $\psi_1^+, \psi_2^+$.
We denote the corresponding transfer matrix by $t_u(\lambda)=\begin{pmatrix} a_u(\lambda) & c_u(\lambda)\\b_u(\lambda) & d_u(\lambda)\end{pmatrix} $:
$$\begin{pmatrix} \psi_1^-(x,\lambda) &\psi_2^-(x,\lambda)\end{pmatrix}=\begin{pmatrix} \psi_1^+(x,\lambda) &\psi_2^+(x,\lambda)\end{pmatrix}t_u(\lambda).$$
The functions $\frac{1}{a_u}$, $\frac1{d_u}$ and $\frac{b_u}{a_u}$, $\frac{c_u}{d_u}$ are called transmission and reflection coefficients respectively.
Thanks to the symmetry relations
\begin{equation}\label{sym}\begin{split}
&\psi_1^-(x,\lambda)= \, \, \, \sigma_3\psi_1^-(x,-\lambda), \quad  \,  \psi_2^+(x, \lambda)=-\sigma_3\psi_2^+(x,-\lambda),\\
&\psi_1^-(x, \lambda)=-\sigma_1\sigma_3\overline{\psi_2^-(x, \bar \lambda)}, \quad \psi_2^+(x,\lambda)=\sigma_1\sigma_3\overline{\psi_1^+(x, \bar \lambda)},
\end{split}
\end{equation}
where 
$\sigma_1= 
\left(
\begin{array}{ccccccccc}
0 &1 \\
1 &0
\end{array}
\right)$,
one has, for all $\lambda\in \R\cup i \R$, 
\begin{eqnarray*} a_u(\lambda) &= & a_u(-\lambda), \,\,\, a_u(\lambda)=\overline{d_u(\bar\lambda)},  \\ b_u(-\lambda) &= &-b_u(\lambda), \,\,\,  c_u(\lambda)=-\overline{b_u(\bar \lambda)}.\end{eqnarray*} 
Since $\det t_u(\lambda)=1$, the above relations imply that
\begin{eqnarray} \label{identab}|a_u(\lambda)|^2+|b_u(\lambda)|^2 &=&1, \quad \forall\,\, \lambda\in \R, \\
\label{identabim} |a_u(\lambda)|^2-|b_u(\lambda)|^2 &=&1, \quad \forall\,\, \lambda\in i\R.\end{eqnarray} 
Observe also that $a_u(0)=1$. 

\smallskip The function  $a_u$ extends analytically to $\Omega_+$ since it can be expressed through the Wronskian of $\psi_1^-$ and $\psi_2^+$:
\begin{equation}\label{w}
a_u(\lambda)=\det(\psi_1^-(x,\lambda), \psi_2^+(x,\lambda)).
\end{equation}
The relation \eqref{w} also shows that the zeros of $a_u$ in $\Omega_+$ coincide with the values of $\lambda$ for which 
the system\refeq{sp} has a non trivial $L^2$ solution. In this case, we say that $\lambda$ is an eigenvalue of the spectral problem
\eqref{sp} (or of the operator pencil $L_u(\lambda)$).  These eigenvalues give rise to the bright solitons. For one-soliton solutions\refeq{particularbright},  one has
$$ \ds a_{u_{E, c}}(\lam)= e^{ -\frac {i}  {2}   \|u_{E,c}\|^2_{L^2(\R)}}\frac {\lam^2 -\zeta_{E, c}}  {\lam^2 - \overline \zeta_{E, c}}
\quad {\rm with}\quad \zeta_{E, c}= - \frac c 4 + i \frac {\sqrt E} 2\cdotp$$

To determine the behavior of $a_u$ at infinity, it is convenient to transform the Kaup-Newell
spectral problem \eqref{sp} into a more "familiar" Zakharov-Shabat spectral problem (the spectral problem associated with the cubic NLS)
which is linear with respect to the spectral parameter. This can be done by means of the following transformation \ccite{KN, PSS}:
\begin{equation}
\label{gauge} \wt\psi(x) =
 \exp\Big(\frac 1  {2i} \sigma_3 \int^\infty_x dy |u(y)|^2\Big)
 \left(
\begin{array}{ccccccccc}
1 &0 \\
- \overline {u}(x) &  2 i\lam
\end{array}
\right)\psi (x) .\end{equation}
One can easily check that  $\psi$ is a solution of \eqref{sp} if and only if $\tilde \psi$ satisfies 
\begin{equation}\label{sp1}
i\sigma_3\partial_x\tilde \psi-Q\tilde\psi=\zeta\tilde \psi, \quad Q=   \left(
\begin{array}{ccccccccc}
0 &q  \\
r &0
\end{array}
\right), 
\quad \zeta=\lambda^2,
\end{equation}
with
\begin{equation*}
 q(x) = \frac12 u(x)\exp\Big(- i  \int^\infty_x dy |u(y)|^2\Big) , \quad r(x) =  (i  \overline {u}_x+\frac12\overline {u}|u|^2) (x)\exp\Big( i  \int^\infty_x dy |u(y)|^2\Big).\end{equation*}
The equivalence between the systems \eqref{sp} and  \eqref{sp1} shows that 
\begin{equation}\label{lim}
\lim\limits_{|\lambda|\rightarrow\infty, \, \lambda\in \overline{\Omega}_+}a_u(\lambda)=e^{-\frac{i}2\|u\|^2_{L^2(\R)}}.
\end{equation}
Furthermore, denoting $\tilde a_u(\zeta)=e^{\frac{i}2\|u\|^2_{L^2(\R)}}a_u(\sqrt{\zeta})$, one has the following asymptotic expansion as~$|\zeta|\rightarrow +\infty$, $\im \zeta\geq 0$:
\begin{equation}\label{as}
\ln \tilde a_u(\zeta)=\sum\limits_{k\geq 1} E_k(u)\zeta^{-k}.\end{equation}
The coefficients $E_k$ are polynomial with respect to  $u$ and its derivatives and can be determined  recursively.
They are homogeneous with respect to the scaling~$u(x)\longrightarrow u_\mu(x) \eqdefa \sqrt{\mu} u(\mu x), \,  \mu >0$,
since
\begin{equation}
\label{sca}
\tilde a_{u_\mu}(\zeta)=\tilde a_u\left(\frac\zeta{{\mu}}\right)\cdotp
\end{equation}
The first two among them coincide, up to a constant,  with the  momentum and energy previously introduced by\refeq{momentum}-\eqref{energy}:
 $$E_1(u)=\frac{i}4P(u), \quad E_2(u)=-\frac{i}8E(u).$$
Note that $\tilde a_u(\zeta)$ is holomorphic in the open upper half plane $\C_+$, $C^\infty$ up to the boundary and verifies, in view of\refeq{identab}-\eqref{identabim},
\begin{equation}
\label{size}|\tilde a_u(\zeta)|\geq 1 \, \,  \mbox{for} \,  \zeta <0, \, \, |\tilde a_u(\zeta)|\leq 1 \, \,  \mbox{for} \,  \zeta >0 \andf   \tilde a_u(0)=e^{\frac{i}2\|u\|^2_{L^2(\R)}}. \end{equation}    Furthermore, one can show that $|\tilde a_u(\zeta)|^2\in 1+\cS(\R)$. The analyticity of $\tilde a_u$ allows to express the functionals $E_k$ in terms of the zeros of $\tilde a_u$ in $\C_+$  and of its trace on $\R$ (by the so-called trace formulas). In the simplest case where (i) $\tilde a_u$ does not vanish\footnote{It follows from Formula \eqref{lim} that in that case, $\tilde a_u$ has 
only a finite number of zeros in $\C_+$, all of them being of finite multiplicity.} on~$\R_+$
and (ii) $\tilde a_u$ has only simple zeros~$\zeta_1, \dots, \zeta_N$ in $\C_+$, one has the formulae
\begin{equation}
\label{asympek}E_k(u)=-\frac{2i}k\sum\limits_{j=1}^N \im\zeta_j^k+\frac{i}{2\pi}\int^\infty_{-\infty}  {d\xi}  \, \xi^{k-1} \, \ln |\tilde a_u(\xi)|^2, \quad \forall k\in \N^*,\end{equation} 
that follow immediately from the representation
\begin{equation}
\label{devol} \wt a_u(\zeta) =  \prod^N_{j=1}  \Big( \frac {\zeta -\zeta_j}  {\zeta - \overline \zeta_j} \Big) \exp\Big(\frac 1 {2i\pi} \int^\infty_{-\infty} \frac {d\xi} {\xi-\zeta} \, \ln |\tilde a_u(\xi)|^2 \Big)\, ,  \, \, \forall \zeta \in \C_+    \,  \cdotp\end{equation} 

Taking into account \eqref{size} and the continuity of $\tilde a_u$ with respect to $u$, one can also deduce from\refeq{devol} that
\begin{equation}\label{mass1}
M(u)  =  4 \sum^{N}_{j=1} {\rm arg} (\zeta_j)   - \frac 1 { \pi} \int^\infty_{-\infty} \frac  {d\xi} {\xi} \, \ln |\tilde a_u(\xi)|^2, 
\end{equation}
with, all along this paper, ${\rm arg} (\zeta)\in  [0, 2 \pi[$. 

\medskip 
It is known that  the properties (i), (ii) hold generically:
the subset of Schwartz functions~$u$ verifying the hypothesis (i) and (ii), that we shall denote all along this article by~$\cS_{reg}(\R)$,  is dense in~$\cS(\R) $ \ccite{bealscoifman,  Sulem1,  Sulem,  Sulem0,  Lee}.

\medskip 
In the above discussion,  we have suppressed  the time dependence. If we now restore it, assuming that $u(t)$ is a solution of the DNLS equation, then the time dependence
 of the scattering coefficients~$a_{u(t)}(\lambda)$ and~$b_{u(t)}(\lambda)$ can be deduced from the second equation of\refeq{system}.
 By straightforward computations, one finds a particular simple linear evolution system:
 \begin{equation}\label{time}
 \partial_ta_{u(t)}(\lambda)=0, \quad \partial_tb_{u(t)}(\lambda)=-4i\lambda^4b_{u(t)}(\lambda).
 \end{equation}
 This provides a way of solving the DNLS equation as soon as the potential $u$ can be recovered from the scattering coefficients
 $a_u$, $b_u$, which can be done if $a_u$ has no zeros in $\overline{\Omega}_+$. 
 In this case, 
 one can  reconstruct the potential $u$ from the 
 reflection coefficient $\frac{b_u}{a_u}\virgp$ by solving a suitable Riemann-Hilbert problem.  This procedure can be also adapted to the  general case but 
 the set of scattering data needed
 to reconstruct the potential becomes more intricate,  see\ccite{Sulem1, Sulem, Sulem0} for the details.
 Note also that since $a_u$ is time-independent, the expansion \eqref{as} produces an infinite number of polynomial conservation laws. 
  
\smallskip  
 The use of the inverse scattering transform is restricted to the localized data:
 although the assumption  $u\in \mathcal{S}(\R)$ can be weakened (see \ccite{Sulem1, Sulem, Sulem0, PSS, PSS2}),
 even to define the scattering data~$a_u(\lambda)$,~$b_u(\lambda)$ , $\lambda\in \R\cup i\R$, one needs at least $u\in L^1(\R)$.
 A way to overcome this difficulty and to keep a trace of the complete integrability for $H^s$ solutions, is to exploit the conservation of~$a_u(\lambda)$, for $\lambda\in \Omega_+$,  that remains well defined via \eqref{w} for $u\in L^2(\R)$. As we have already  mentioned above, this idea goes back to the works
 of Killip-Visan-Zang, Killip-Visan and  Koch-Tataru \ccite{killip0, killip, KT}  on the NLS and KdV equations, and will play a crucial role in the proof of Theorem \ref{Mainth}.


\subsection{Study of the function $a_u$ for $H^{\frac12}$ potentials} \label {preliminarystatementscat} 

In this section, we perform a detailed analysis of the function $a_u$ in $\Omega_+$ for $u$ in $H^{\frac 1 2}(\R)$.
A convenient way to do it is to realize the Wronskian~\eqref{w} as a regularized Fredholm determinant.
\subsubsection{Regularized determinant realization of $a_u$}
  Consider
\begin{equation}\label{T}T_u(\lam)\eqdefa   i \lam (\cL_0-   \lam^2)^{-1} U \, ,\end{equation}
where $\cL_0= i \sigma_3 \partial_x$ and $U=\left(
\begin{array}{ccccccccc}
0 &u \\
\overline {u} &0
\end{array}
\right)$. For any $u\in L^2(\R)$, $T_u$ is an holomorphic function of $\lambda$ in~$\Omega_+$
with values in~$\mathscr{C}_2$, and 
\begin{equation}
\label{HSuseful} \|T_u(\lam)\|^2_2   =  \frac { |\lam|^2 } {{ \rm Im} (\lam^2)} \|u\|^2_{L^2(\R)}.\end{equation} 
Indeed, 
\begin{equation}
\label{formulaTu}T_u(\lam)=  i \lam  \left(
\begin{array}{ccccccccc}
0 &- (D+ \lam^2)^{-1}u  \\
(D- \lam^2)^{-1} \overline {u} & 0\end{array}
\right)  \, , \end{equation}with $\ds D= -i  \partial_x$,  and therefore
$$\|T_u(\lam)\|^2_{2}=  \frac  { |\lam|^2}  { 2\pi} \biggl[  \int_{\R^2} d p  dp' \frac {|\hat u (p')|^2  } {|p- {\lam }^2|^2} + \int_{\R^2} d p  dp' \frac {|\hat u (p')|^2  } {|p+ {\lam }^2|^2}\biggr] \virgp $$
which readily leads to\refeq{HSuseful}, by virtue of the  Cauchy's residue theorem.

\smallskip As well,  we find that the trace of $T^2_u(\lam)$ can be written explicitly as follows:
\begin{equation}
\label{trace2useful}  \begin{aligned}   \tr T^2_u(\lam)  =  2i \lam^2\int_{\R} d p \, \frac { |\hat u (p)|^2 }   { p+ 2 \lam^2}  \, \cdotp  \end{aligned}  \end{equation}  
Using the explicit kernel of the free resolvent $(\cL_0-\lambda^2)^{-1}$:
\begin{equation}\label{kernel0}
(\cL_0-\lambda^2)^{-1}(x,y)=\left\{
\begin{array}{c}
 \left(
\begin{array}{ccccccccc}
ie^{-i \lam^2 (x-y)}  &0 \\
0 & 0\end{array}
\right)    \, \, \mbox {for}  \, \,  x < y\\  \\
 \left(
\begin{array}{ccccccccc}
0 & 0 \\
0 & ie^{i \lam^2 (x-y)}\end{array}
\right)     \, \, \mbox {for}  \, \,  x > y\, ,
\end{array}
\right. 
\end{equation}
one can also easily check  that 
there exists a positive constant $C$  such that, for any $p\geq 2$, there holds
\begin{equation}
\label{eq:controlnorml0}  \|T_u(\lam)\| \eqdefa \|T_u(\lam)\|_{\mathscr{L}(L^2, L^2)} \leq C\frac {  |\lam|\,\|u\|_{L^p(\R)}} {( { \rm Im} (\lam^2))^{1- \frac 1 p}}\virgp   \quad \forall \lambda\in\Omega_+, \,\, u\in L^p(\R). \end{equation}

The key point   will be the fact that the function $a_u$ given by \eqref{w} can be expressed in terms 
of~$T_u$ as follows\footnote{See Appendix\refer{basicdeterminants} for the definition of the regularized determinants ${\rm det}_n$ and their basic properties.}:
\begin{equation}
\label{awithdet}  a_u(\lam)= {\rm det}_2 ({\rm I}-T_u(\lam)), \quad \forall \,\lambda\in \Omega_+, \,\, u\in L^2(\R).
\end{equation}
 We also define
 \begin{equation}
\label{awithdet4}  a^{(4)}_u(\lam) \eqdefa  {\rm det}_4 ({\rm I}-T_u(\lam)) \, .\end{equation}
Similarly to $a_u$, the function $a^{(4)}_u $ is holomorphic  on $\Omega_+$, for any $u\in L^2(\R)$.
Since the matrix~$U$ is  anti-diagonal,  the  identity\refeq{dety1} implies that the two functions $a_u$ and $a^{(4)}_u$ are connected by the following 
relation:
\begin{equation}
\label{link24}  a^{(4)}_u(\lam)=  a_u(\lam) \exp \Big(  \frac {\tr T^2_u(\lam)} {2}\Big)\,   \cdotp\end{equation}

Below we collect some general bounds on the functions $a_u$ and $a_u^{(4)}$ that follow directly from
 the corresponding properties of the regularized determinants.  The first bounds can be stated as follows:
\begin{proposition} 
\label{stabilityestnew}
{\sl There exists  a positive  constant~$C$ such that the following estimates hold
\begin{equation} \label{eq1}  |a_u(\lam)| \leq e^{C \frac {|\lam|^2} {{ \rm Im}  (\lam^2) }\|u\|^2_{L^2(\R)}  }, \quad  |a^{(4)}_u(\lam)| \leq e^{C \frac {|\lam|^2} {{ \rm Im} (\lam^2) }\|u\|^2_{L^2(\R)}  } ,\quad \forall \lambda\in \Omega_+, \, u\in L^2(\R),\end{equation}  
\begin{equation}  \begin{aligned}
\label{eqstab} |a_{u_1} (\lam)-a_{u_2} (\lam)| & \leq C  e^{ C  \frac {|\lam|^2 } {{ \rm Im} (\lam^2)} \big(\|u_1\|^2_{L^2(\R)}+\|u_2\|^2_{L^2(\R)}\big)} \frac { |\lam| } {\sqrt { { \rm Im} (\lam^2) }  }  \|u_1- u_2\|_{L^2(\R)}, \\ & \qquad \qquad \qquad \qquad \qquad \qquad \qquad \quad \forall \lambda\in \Omega_+, \, u_1, \, u_2\in L^2(\R), \end{aligned}\end{equation} and
  \begin{equation} \begin{aligned}
\label{studyau}  &  \qquad  \bigl|a_{u} (\lam)-1\bigr| +  \bigl|a_{u}^{(4)} (\lam)-1\bigr| \leq Ce^{C \frac {|\lam|^4} {({ \rm Im}  (\lam^2))^2 }\|u\|^4_{L^2(\R)}  } \\
 & \times \frac { |\lam|^2} {\sqrt { { \rm Im} (\lam^2)} }\int_{\R} d p \, |\hat u (p)|^2  \biggl(\frac {1 }   { |p+ 2 \lam^2|^{ \frac 1 2}} + \frac {1 }   { |p- 2 \lam^2|^{ \frac 1 2}}\biggr), \,   \forall \lambda\in \Omega_+, \,  u\in L^2(\R). \end{aligned}
\end{equation}}
 \end{proposition}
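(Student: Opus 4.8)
\textbf{Proof strategy for Proposition \ref{stabilityestnew}.} The plan is to reduce everything to three standard facts about regularized determinants collected in Appendix \ref{basicdeterminants}: (a) the Hilbert--Schmidt bound $|{\rm det}_2({\rm I}-A)|\leq e^{C\|A\|_2^2}$, together with the analogous bound $|{\rm det}_4({\rm I}-A)|\leq e^{C\|A\|_4^4}$; (b) the Lipschitz estimate $|{\rm det}_2({\rm I}-A)-{\rm det}_2({\rm I}-B)|\leq \|A-B\|_2\, e^{C(\|A\|_2+\|B\|_2)^2}$; and (c) a first-order bound of the type $|{\rm det}_2({\rm I}-A)-1|\leq \|A\|_2^2\, e^{C\|A\|_2^2}$ and its ${\rm det}_4$ counterpart $|{\rm det}_4({\rm I}-A)-1|\lesssim(\|A\|_2^2+\|A\|_4^4)e^{C(\cdots)}$ — more precisely the sharper statement that ${\rm det}_2({\rm I}-A)-1$ is controlled by $\|A^2\|_1$ up to the exponential, which is what will produce the gain in \eqref{studyau}. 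All of these are applied with $A=T_u(\lam)$, $B=T_{u_1}(\lam)$ etc., using the already-established identities \eqref{HSuseful}, \eqref{trace2useful} and the linearity $T_{u_1}-T_{u_2}=T_{u_1-u_2}$.

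First I would prove \eqref{eq1}: feeding \eqref{HSuseful} into fact (a) for ${\rm det}_2$ gives immediately $|a_u(\lam)|\le e^{C\frac{|\lam|^2}{{\rm Im}(\lam^2)}\|u\|_{L^2}^2}$. For $a_u^{(4)}$ one has two options — either use fact (a) for ${\rm det}_4$ after estimating $\|T_u(\lam)\|_4\le\|T_u(\lam)\|_2$, or use the relation \eqref{link24} together with $|\tr T_u^2(\lam)|\le \|T_u(\lam)\|_2^2$ and the bound just obtained for $a_u$; either way the same exponential appears. Next, \eqref{eqstab} is a direct application of fact (b) with $A=T_{u_1}(\lam)$, $B=T_{u_2}(\lam)$: the difference is $\|T_{u_1-u_2}(\lam)\|_2=\frac{|\lam|}{\sqrt{{\rm Im}(\lam^2)}}\|u_1-u_2\|_{L^2}$ by \eqref{HSuseful}, and the $\|A\|_2+\|B\|_2$ in the exponent is $\le\frac{C|\lam|}{\sqrt{{\rm Im}(\lam^2)}}(\|u_1\|_{L^2}+\|u_2\|_{L^2})$, whose square matches the claimed exponential.

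The main work — and the step I expect to be the real obstacle — is \eqref{studyau}, because the right-hand side is \emph{not} simply $\|T_u(\lam)\|_2^2$ (that would only reproduce \eqref{eq1}); it is a genuinely finer quantity, essentially $\|T_u^2(\lam)\|_1$ or $|\tr T_u^2(\lam)|$ written out spectrally. The point is that for the \emph{anti-diagonal} $U$, $T_u(\lam)$ is off-diagonal, so $\|T_u(\lam)\|_2^2$ alone is too crude; instead, ${\rm det}_2({\rm I}-T_u)-1$ and ${\rm det}_4({\rm I}-T_u)-1$ vanish to second order and are controlled by the trace-class norm of $T_u^2$. I would therefore: (i) invoke the sharp first-order determinant estimate $|{\rm det}_2({\rm I}-A)-1|\le \|A^2\|_1\,e^{C\|A\|_2^2}$ (and its ${\rm det}_4$ analogue, also reducing to $\|A^2\|_1$ and $\|A\|_4^4\le\|A^2\|_1\|A\|^2$ via the factorization $A^4=(A^2)^2$); (ii) compute $\|T_u^2(\lam)\|_1$ from the explicit block structure \eqref{formulaTu}, writing $T_u^2(\lam)=-\lam^2\,{\rm diag}\big((D+\lam^2)^{-1}u(D-\lam^2)^{-1}\bar u,\ (D-\lam^2)^{-1}\bar u(D+\lam^2)^{-1}u\big)$ and bounding the trace norm of each block by a product of two Hilbert--Schmidt norms, which produces exactly the factor $|\lam|^2\int dp\,|\hat u(p)|^2\big(|p+2\lam^2|^{-1/2}+|p-2\lam^2|^{-1/2}\big)$ after the Cauchy residue computation already used for \eqref{HSuseful}–\eqref{trace2useful}; and (iii) absorb the extra factor $\frac{1}{\sqrt{{\rm Im}(\lam^2)}}$ and the quartic exponential $e^{C\frac{|\lam|^4}{({\rm Im}(\lam^2))^2}\|u\|_{L^2}^4}=e^{C\|T_u(\lam)\|_2^4}$ from step (i). The only delicate bookkeeping is making sure the Fourier-side split into $|p\pm 2\lam^2|^{-1/2}$ is done so that each resolvent contributes one half-power; this is where I would be most careful, but it is a routine (if slightly tedious) Hilbert--Schmidt estimate of the kind performed just above \eqref{HSuseful}.
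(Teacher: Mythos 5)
Your handling of \eqref{eq1} and \eqref{eqstab} coincides with the paper's: both are immediate from \eqref{boundedest}, \eqref{detyest4} together with \eqref{HSuseful} and \eqref{link24}, and either of your two routes to the $a_u^{(4)}$ bound works. The problem is step (ii) of your argument for \eqref{studyau}. You propose to control $a_u(\lam)-1$ and $a_u^{(4)}(\lam)-1$ through $\|T_u^2(\lam)\|_1$, ``bounding the trace norm of each block by a product of two Hilbert--Schmidt norms.'' The two Hilbert--Schmidt factors actually available in each block of $T_u^2(\lam)$ are $(D+\lam^2)^{-1}u$ and $(D-\lam^2)^{-1}\bar u$, whose $\mathscr{C}_2$ norms are each comparable to $(\im \lam^2)^{-1/2}\|u\|_{L^2(\R)}$; their product therefore gives only $\|T_u^2(\lam)\|_1\lesssim \frac{|\lam|^2}{\im \lam^2}\|u\|^2_{L^2(\R)}$, i.e.\ nothing finer than $\|T_u(\lam)\|_2^2$, and in particular \emph{not} the frequency-localized quantity $\int |\hat u(p)|^2\bigl(|p+2\lam^2|^{-1/2}+|p-2\lam^2|^{-1/2}\bigr)dp$ appearing on the right of \eqref{studyau}. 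That quantity is genuinely smaller --- it tends to $0$ when $\hat u$ concentrates away from $\mp 2\lam^2$, while $\|u\|_{L^2(\R)}^4$ does not --- and this gain is what the rest of the paper uses (e.g.\ in Corollary~\ref{coruse} and Lemma~\ref{lem1}); a proof delivering only $\frac{|\lam|^4}{(\im\lam^2)^2}\|u\|_{L^2(\R)}^4$ has not proved \eqref{studyau}. Redistributing fractional powers of the middle resolvent between the two factors does not rescue this, since $(D-\lam^2)^{-s}\bar u$ is Hilbert--Schmidt only for $s<\frac12$ and the resulting bound does not reorganize into the stated one.

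What the paper actually does is different at exactly this point. For the $\det_2$ part it uses \eqref{link24} to write $|a_u-1|\lesssim e^{\cdots}\bigl(|a_u^{(4)}-1|+|\tr T_u^2(\lam)|\bigr)$ and bounds the trace term \emph{exactly} from the explicit formula \eqref{trace2useful}, extracting one half-power via $|p+2\lam^2|\geq 2\im\lam^2$. For $a_u^{(4)}-1$ it applies \eqref{detyest3} with $n=4$ and $\|T_u^4(\lam)\|_1\leq\|T_u^2(\lam)\|_2^2$ --- the Hilbert--Schmidt norm of the square, not its trace norm --- and then estimates each block by the mixed bound $\|(D+\lam^2)^{-1}u(D-\lam^2)^{-1}\bar u\|_2^2\leq(\im\lam^2)^{-1}\|u\|^2_{L^2(\R)}\|(D+2\lam^2)^{-1}u\|^2_{L^\infty(\R)}$, followed by Cauchy--Schwarz on the Fourier side with the split $|p+2\lam^2|^{-1}=|p+2\lam^2|^{-1/4}\cdot|p+2\lam^2|^{-3/4}$, the $-3/4$ power being square-integrable with norm $\lesssim(\im\lam^2)^{-1/4}$; see \eqref{compl1}--\eqref{compl2}. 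This $L^\infty$ step is the missing idea: it is what converts one resolvent into the weight $|p\pm2\lam^2|^{-1/2}$ while the other resolvent and $\|u\|_{L^2(\R)}$ supply the remaining powers of $\im\lam^2$ (which are then absorbed into the quartic exponential). If you replace your step (ii) by this argument, the rest of your outline goes through.
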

     \begin{proof}    The two first estimates \eqref{eq1} and \eqref{eqstab} readily follow   from the relations\refeq{boundedest},  \refeq{detyest4} and\refeq{HSuseful},  \refeq{link24}.
  In order    to establish  the last estimate,   we start by observing that by virtue of\refeq{HSuseful},  \refeq{trace2useful} and \refeq{link24}, we have 
  \begin{equation} \begin{aligned}\label{*0}
    |a_u(\lam)-1| & \leq  e^{C \frac {|\lam|^2} {{ \rm Im}  (\lam^2) }\|u\|^2_{L^2(\R)}  }(|a_{u}^{(4)} (\lam)-1| +|\tr T^2_u(\lam)|)  \\ & \lesssim  e^{C \frac {|\lam|^2} {{ \rm Im}  (\lam^2) }\|u\|^2_{L^2(\R)}  }\Big(|a_{u}^{(4)} (\lam)-1| +\frac { |\lam|^2} {\sqrt { { \rm Im} (\lam^2)} } \int_{\R} d p \,   \frac {|\hat u (p)|^2  }   { |p+ 2 \lam^2|^{ \frac 1 2}} \Big) \cdotp  \end{aligned} \end{equation} 
  It remains to  control $a_u^{(4)}-1$. To this end, we apply\refeq{detyest3}, which, in view of   \eqref{HSuseful} and\refeq{eq:controlnorml0},  leads to the following inequality 
\begin{equation}\label{*}
\bigl|a_{u}^{(4)} (\lam)-1\bigr|
\leq Ce^{C \frac {|\lam|^4} {({ \rm Im}  (\lam^2))^2 }\|u\|^4_{L^2(\R)}  }
\|T^2_u(\lam)\|^2_2.
\end{equation}
According to\refeq{formulaTu}, the operator $T^2_u(\lambda)$ has the form
$$ T^2_u(\lam)=  \lam^2  \left(
\begin{array}{ccccccccc}
(D+ {\lam }^2)^{-1} u  (D -\lam^2)^{-1} \overline {u}&0 \\  \\
0  &(D - {\lam }^2)^{-1} \bar u (D+  {\lam }^2)^{-1} u\end{array}
\right)  .$$
 Then, using the explicit kernel of $(D\pm\lam^2)^{-1}$ (see \eqref{kernel0}), we get by straightforward computations 
 $$  \|(D+ {\lam }^2)^{-1} u  (D -\lam^2)^{-1} \overline {u}\|_2^2\leq \frac{1}{\im \lam^2}\|u\|_{L^2(\R)}^2\|(D+2\lam^2)^{-1}u\|_{L^\infty(\R)}^2\, \cdotp$$
Since 
$$
\begin{aligned}
  \|(D+2\lam^2)^{-1}u\|^2_{L^\infty(\R)} & \leq \|(p+2\lam^2)^{-1/4}\hat u\|^2_{L^2(\R)} \int_\R \frac{dp}{|p+2\lam^2|^{\frac 3 2}} \\ & \lesssim\frac{1}{(\im \lam^2)^{{\frac 1 2}}}\|(p+2\lam^2)^{-1/4}\hat u\|_{L^2(\R)}^2\virgp\end{aligned}$$ 
we infer that
\begin{equation}
\label{compl1}
\|(D+ {\lam }^2)^{-1} u  (D -\lam^2)^{-1} \overline {u}\|_2^2 \lesssim\frac{1}{(\im \lam^2)^{\frac 3 2}}\|u\|_{L^2(\R)}^2\|(p+2\lam^2)^{-1/4}\hat u\|_{L^2(\R)}^2\,  \cdotp
\end{equation}
Similarly, we have 
\begin{equation}
\label{compl2}
 \|(D- {\lam }^2)^{-1} \bar u  (D +\lam^2)^{-1}  {u}\|_2^2\lesssim \frac{1}{(\im \lam^2)^{\frac 3 2}}\|u\|_{L^2(\R)}^2\|(p-2\lam^2)^{-1/4}\hat u\|_{L^2(\R)}^2\,  \cdotp
 \end{equation}
 Combining the two latter inequalities together with \eqref{*0}-\eqref{*},  we get the desired bound \eqref{studyau}.
 \end{proof}
 
 \medbreak

 Invoking   the asymptotic formula\refeq{lim} together with the  stability estimate \eqref{eqstab}, we obtain the following corollary:
  \begin{cor} 
    \label{coruse}
    {\sl Let $u$ be a function in $L^{2}(\R)$. Then, for any $0<\delta<\frac\pi 2$,  there holds
  \begin{equation}
\label{studyaubeh0}    
\lim\limits_{\lam\rightarrow 0, \lam \in \Gamma_\delta}a_{u} (\lam)= 1 \, , \end{equation} 
and
\begin{equation}
\label{lim1}   \lim\limits_{|\lam|\rightarrow \infty, \lam \in \Gamma_\delta} a_{u} (\lam)=e^{-\frac{i}2\|u\|^2_{L^2(\R)}} ,
\end{equation} 
where we denote $\Gamma_\delta\eqdefa\big\{ \lam \in \Omega_{+}  :  \delta <  {\rm arg} (\lam^2)  < \pi -\delta \big\}\cdot$
  }
    \end{cor}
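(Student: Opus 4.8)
The plan is to deduce Corollary \ref{coruse} directly from the stability estimate \eqref{eqstab} of Proposition \ref{stabilityestnew} together with the limiting formulas \eqref{studyaubeh0} (for $\lambda\to 0$, this is just \eqref{studyaubeh0}, which we actually want to prove) and \eqref{lim} (the asymptotic formula for $a_u$ at infinity, valid for Schwartz $u$), via a density argument. The key point is that on the sector $\Gamma_\delta$ one has the uniform comparison $\frac{|\lambda|^2}{\mathrm{Im}(\lambda^2)}\leq \frac{1}{\sin\delta}$ and $\frac{|\lambda|}{\sqrt{\mathrm{Im}(\lambda^2)}}\leq \frac{1}{\sqrt{\sin\delta}}$, so that \eqref{eqstab} reads, for $\lambda\in\Gamma_\delta$,
\begin{equation}\label{planstab}
|a_{u_1}(\lambda)-a_{u_2}(\lambda)|\leq C\, e^{\frac{C}{\sin\delta}\left(\|u_1\|_{L^2}^2+\|u_2\|_{L^2}^2\right)}\frac{1}{\sqrt{\sin\delta}}\,\|u_1-u_2\|_{L^2},
\end{equation}
a bound uniform in $\lambda$ on the whole sector. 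So the family $\{a_v\}$ indexed by $v$ in a fixed $L^2$-ball is uniformly Lipschitz on $\Gamma_\delta$ with respect to the $L^2$ topology.

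First I would fix $\delta\in(0,\pi/2)$ and $u\in L^2(\R)$, and pick a sequence $u_n\in\mathcal S(\R)$ with $u_n\to u$ in $L^2(\R)$; in particular $\|u_n\|_{L^2}$ stays bounded, say by $R$. For each $n$, the Schwartz-class theory recalled in the previous subsection gives $a_{u_n}(\lambda)\to 1$ as $\lambda\to 0$ in $\overline\Omega_+$ (recall $a_{u_n}(0)=1$ and $a_{u_n}$ is $C^\infty$ up to the boundary, cf. the discussion around \eqref{w} and the remark $a_u(0)=1$), and $a_{u_n}(\lambda)\to e^{-\frac{i}{2}\|u_n\|_{L^2}^2}$ as $|\lambda|\to\infty$ in $\overline\Omega_+$ by \eqref{lim}. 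Now apply \eqref{planstab} with $u_1=u$, $u_2=u_n$: for every $\lambda\in\Gamma_\delta$,
\begin{equation}\label{planunif}
|a_u(\lambda)-a_{u_n}(\lambda)|\leq C\, e^{\frac{2CR^2}{\sin\delta}}\frac{1}{\sqrt{\sin\delta}}\,\|u-u_n\|_{L^2}\xrightarrow[n\to\infty]{}0,
\end{equation}
uniformly on $\Gamma_\delta$. Thus $a_u$ is a uniform limit on $\Gamma_\delta$ of functions each of which has limit $1$ at the origin (along $\Gamma_\delta$) and limit $e^{-\frac{i}{2}\|u_n\|_{L^2}^2}$ at infinity.

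To finish, I would run the standard $\varepsilon/3$ argument. Given $\varepsilon>0$, choose $n$ so that the right-hand side of \eqref{planunif} is $<\varepsilon/3$; then for $\lambda,\lambda'\in\Gamma_\delta$ near $0$ we get $|a_u(\lambda)-1|\leq |a_u(\lambda)-a_{u_n}(\lambda)|+|a_{u_n}(\lambda)-1|<\varepsilon/3+\varepsilon/3<\varepsilon$ once $\lambda$ is close enough to $0$, which proves \eqref{studyaubeh0}. For \eqref{lim1}, note in addition that $e^{-\frac{i}{2}\|u_n\|_{L^2}^2}\to e^{-\frac{i}{2}\|u\|_{L^2}^2}$ since $\|u_n\|_{L^2}\to\|u\|_{L^2}$; so for $|\lambda|$ large, $\lambda\in\Gamma_\delta$, one has $|a_u(\lambda)-e^{-\frac{i}{2}\|u\|_{L^2}^2}|\leq |a_u(\lambda)-a_{u_n}(\lambda)|+|a_{u_n}(\lambda)-e^{-\frac{i}{2}\|u_n\|_{L^2}^2}|+|e^{-\frac{i}{2}\|u_n\|_{L^2}^2}-e^{-\frac{i}{2}\|u\|_{L^2}^2}|<\varepsilon$, giving \eqref{lim1}. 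There is essentially no serious obstacle here: the only thing to be slightly careful about is that \eqref{lim} and the vanishing of $a_{u_n}-1$ at the origin are only invoked for the Schwartz-class approximants $u_n$ (where the full scattering theory applies), and that the constants in \eqref{eqstab} degenerate only as $\delta\to 0$, which is harmless since $\delta$ is fixed throughout.
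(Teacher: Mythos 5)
Your proof is correct and follows essentially the same route the paper intends: the stability estimate \eqref{eqstab} becomes uniform in $\lam$ on $\Gamma_\delta$ since $\im(\lam^2)\geq|\lam|^2\sin\delta$ there, and a density argument using Schwartz approximants (for which $a_{u_n}(0)=1$ with continuity up to the boundary, and \eqref{lim} at infinity) transfers both limits to general $L^2$ data. Nothing further to add.
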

 \medbreak    
    
 \begin{remark} 
{\sl Observe also that the stability estimate \eqref{eqstab} combined with the $H^{\frac12}$ continuity of the DNLS flow gives 
 the conservation of $a_{u(t)}(\lambda)$ for $H^{\frac12}(\R)$-solutions of DNLS.  }
 \end{remark}
  \medbreak
 
 Assuming that the potential $u$ is in $H^{\frac 1 2}(\R)$, or more generally in   $L^2(\R)\cap L^4(\R)$, one gets:
   \begin{lemma} 
\label{lema4first}
{\sl There exists a positive  constant~$C$ such that:
\begin{equation} \label{eq5}     |a^{(4)}_u(\lam)- 1| \leq C e^{C \frac {|\lam|^4} {({ \rm Im}  (\lam^2) )^2}\|u\|^4_{L^2(\R)}  }   \frac {|\lam|^4} {({ \rm Im}  (\lam^2) )^3}\|u\|^4_{L^4(\R)} , \quad \forall \lam\in \Omega_+, \, \forall u\in L^2(\R)\cap L^4(\R),  \end{equation}  
\begin{equation}\label{eq6}
 |a_u(\lam)e^{\frac{i}2\|u\|^2_{L^2(\R)}}- 1| \leq Ce^{C\frac { |\lam|^4 }
  {({ \rm Im}  (\lam^2) )^2}  \|u\|^4_{L^2(\R)}}    \frac {|\lam|^2} {({ \rm Im}  (\lam^2) )^2}
  \|u\|^2_{\dot H^{ \frac 1 2}(\R)}  , \quad \forall \lam\in \Omega_+, \, \forall u\in H^{\frac12}(\R), \end{equation} 
and   \begin{equation}
\label{eq7} |a^{(4)}_{u_1} (\lam)-a^4_{u_2} (\lam)| \leq  C e^{C  \frac {|\lam|^4 } {({ \rm Im} (\lam^2))^2}  \big(\|u_1\|^4_{L^2(\R)}+\|u_2\|^4_{L^2(\R)} \big) }   \frac { |\lam|^{ \frac 1 2  } } { ({ \rm Im} (\lam^2))^{ \frac 3 8 } }  \|u_1- u_2\|^{ \frac 1 2  } _{L^4(\R)},\end{equation}
for all
$u_1, u_2$ in~$L^2(\R) \cap L^4(\R)$ and all $\lam$ in $\Omega_{+} $. 
}
\end{lemma}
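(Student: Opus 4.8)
\textbf{Proof plan for Lemma \ref{lema4first}.}

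The plan is to deduce all three estimates from the regularized-determinant bounds collected in Appendix~\ref{basicdeterminants}, combined with the Hilbert--Schmidt and trace computations \eqref{HSuseful}, \eqref{trace2useful} and the operator-norm bound \eqref{eq:controlnorml0}, exactly in the spirit of the proof of Proposition~\ref{stabilityestnew}. For \eqref{eq5}: starting from \eqref{awithdet4} and the basic estimate \eqref{detyest3} one has $|a^{(4)}_u(\lam)-1|\leq C\exp\!\big(C\|T_u(\lam)\|_4^4\big)\|T_u(\lam)\|_4^4$; since $\mathscr{C}_2\hookrightarrow\mathscr{C}_4$ with $\|T_u\|_4\leq\|T_u\|_2$, the exponential is controlled by \eqref{HSuseful}, and it remains to bound $\|T_u(\lam)\|_4^4$. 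Here I would use H\"older for Schatten norms, $\|T_u(\lam)\|_4^4=\|T_u(\lam)^2\|_2^2$, and then the kernel computation already carried out in the proof of Proposition~\ref{stabilityestnew}: from \eqref{compl1}--\eqref{compl2} one gets $\|T_u^2(\lam)\|_2^2\lesssim \frac{|\lam|^4}{(\im\lam^2)^{3/2}}\|u\|_{L^2}^2\big(\|(p+2\lam^2)^{-1/4}\hat u\|_{L^2}^2+\|(p-2\lam^2)^{-1/4}\hat u\|_{L^2}^2\big)$. The point is then to further estimate $\|(p\pm 2\lam^2)^{-1/4}\hat u\|_{L^2}^2\lesssim (\im\lam^2)^{-1/2}\|\hat u\|_{L^\infty}^2\int\!\frac{dp}{|p\pm2\lam^2|^{1/2}}$ is the wrong direction; instead, since $|p\pm2\lam^2|\geq \im\lam^2\cdot$const is false, I would simply bound $|p\pm 2\lam^2|^{-1/2}\leq (\im\lam^2)^{-1/2}$ pointwise when $|\cdot|$ is comparable, but more cleanly use $\|(p\pm2\lam^2)^{-1/4}\hat u\|_{L^2}^2\leq(\im\lam^2)^{-1/2}\|u\|_{L^2}^2$ — wait, that loses the $L^4$ gain. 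The correct route, which I would follow, is to go back to $T_u^2$ and apply the $L^p$ resolvent bound \eqref{eq:controlnorml0} with $p=4$ together with \eqref{HSuseful}: write $\|T_u^2\|_2\leq\|T_u\|\,\|T_u\|_2$ is still not enough; rather $\|T_u(\lam)^2\|_2\leq \|T_u(\lam)\|_{\mathscr L(L^2,L^2)}\,\|T_u(\lam)\|_2$ only gives $\|u\|_{L^4}\|u\|_{L^2}$. To reach $\|u\|_{L^4}^4$ one should instead interpolate: $\|T_u^2\|_2\leq\|T_u\|^2_{\mathscr L}\cdot(\text{rank-type factor})$ — the clean statement is $\|T_u(\lam)^2\|_2\le \|T_u(\lam)\|_{\mathscr L(L^2)}^{1}\|T_u(\lam)\|_4^{1}\|T_u(\lam)\|_4$, so ultimately the estimate \eqref{eq5} follows by writing $\|T_u\|_4^4=\|T_u^2\|_2^2$ and bounding one factor of $T_u^2$ in $\mathscr C_2$ via the kernel estimate that produces $\|u\|_{L^4}^2$ after applying Young/Hausdorff--Young, as in \eqref{compl1}.

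For \eqref{eq6}: by \eqref{link24} we have $a_u(\lam)e^{\frac i2\|u\|_{L^2}^2}=a^{(4)}_u(\lam)e^{\frac i2\|u\|_{L^2}^2}\exp\!\big(-\tfrac12\tr T_u^2(\lam)\big)$, so using \eqref{trace2useful} to identify $\tfrac12\tr T_u^2(\lam)=i\lam^2\int\frac{|\hat u(p)|^2}{p+2\lam^2}\,dp$ and the identity $\tfrac i2\|u\|_{L^2}^2=\tfrac i2\int|\hat u(p)|^2\,dp$, the main term in $-\tfrac12\tr T_u^2+\tfrac i2\|u\|_{L^2}^2$ telescopes to $\tfrac i2\int|\hat u(p)|^2\big(1-\tfrac{2\lam^2}{p+2\lam^2}\big)dp=\tfrac i2\int\frac{p\,|\hat u(p)|^2}{p+2\lam^2}\,dp$, and then $\big|\int\frac{p\,|\hat u(p)|^2}{p+2\lam^2}\,dp\big|\lesssim \frac{|\lam|}{(\im\lam^2)^{1/2}}$ — no: one bounds $\big|\frac{p}{p+2\lam^2}\big|\leq 1+\frac{2|\lam|^2}{|p+2\lam^2|}$ and uses $|p+2\lam^2|\geq \im\lam^2$ together with $\int\frac{|p|^{1/2}|\hat u(p)|^2}{\cdot}$ to extract $\|u\|_{\dot H^{1/2}}^2$ and the factor $|\lam|^2(\im\lam^2)^{-2}$. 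I would then combine this with \eqref{eq5} and $\|u\|_{L^4}^4\lesssim\|u\|_{L^2}^2\|u\|_{\dot H^{1/2}}^2$ (Gagliardo--Nirenberg) to absorb the $a^{(4)}_u-1$ contribution into the same right-hand side, exponentiating via $|e^z-1|\leq|z|e^{|z|}$.

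For \eqref{eq7}: this is the Lipschitz-type (indeed H\"older-$\tfrac12$) bound, and I would derive it from the stability estimate \eqref{detyest4} for $\det_4$, namely $|a^{(4)}_{u_1}(\lam)-a^{(4)}_{u_2}(\lam)|\leq \|T_{u_1}(\lam)-T_{u_2}(\lam)\|_4\,\exp\!\big(C(\|T_{u_1}\|_4^4+\|T_{u_2}\|_4^4)\big)$, where $T_{u_1}-T_{u_2}=T_{u_1-u_2}$ by linearity of $U\mapsto T_u$. The exponential is handled by \eqref{HSuseful} as before. The remaining task is to bound $\|T_{u_1-u_2}(\lam)\|_4$ by $\|u_1-u_2\|_{L^4}^{1/2}$ times the claimed power of $|\lam|$ and $\im\lam^2$; I would do this by interpolating the $\mathscr C_2$ bound \eqref{HSuseful} (which gives $\|u_1-u_2\|_{L^2}$, the wrong norm) against the operator-norm bound \eqref{eq:controlnorml0} with $p=4$ (which gives $\|u_1-u_2\|_{L^4}$): $\|T_{u_1-u_2}\|_4\leq\|T_{u_1-u_2}\|_{\mathscr L(L^2)}^{1/2}\,\|T_{u_1-u_2}\|_2^{1/2}$ — this yields $\|u_1-u_2\|_{L^4}^{1/2}\|u_1-u_2\|_{L^2}^{1/2}$, so to get a pure $\|u_1-u_2\|_{L^4}^{1/2}$ one should instead interpolate $\mathscr C_4$ between $\mathscr L(L^2)=\mathscr C_\infty$ and $\mathscr C_2$ as $\|\cdot\|_4\leq\|\cdot\|_\infty^{1/2}\|\cdot\|_2^{1/2}$ and then re-estimate the $\mathscr C_2$-factor also by an $L^4$ norm via the kernel bound underlying \eqref{compl1}. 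I expect this interpolation bookkeeping — getting exactly the exponents $\tfrac12$ on $\|u_1-u_2\|_{L^4}$ and $-\tfrac38$ on $\im\lam^2$ to match — to be the one genuinely fiddly point; everything else is a direct transcription of the determinant inequalities from the appendix together with the already-established kernel computations, so the main obstacle is purely the careful tracking of Schatten-norm interpolation and the powers of $|\lam|$ and $\im\lam^2$ rather than any new idea.
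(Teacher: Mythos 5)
Your plan for \eqref{eq5} stalls exactly where the real work lies. Every route you attempt for bounding $\|T_u(\lam)\|_4^4$ by $\frac{|\lam|^4}{(\im (\lam^2))^3}\|u\|_{L^4(\R)}^4$ is abandoned mid-sentence, and the closing appeal to ``the kernel estimate that produces $\|u\|_{L^4}^2$, as in \eqref{compl1}'' does not deliver: \eqref{compl1} bounds $\|T_u^2(\lam)\|_2^2$ by $(\im (\lam^2))^{-3/2}\|u\|_{L^2(\R)}^2\|(p+2\lam^2)^{-1/4}\hat u\|_{L^2(\R)}^2$, an $L^2$-type quantity which cannot be converted into $\|u\|_{L^4(\R)}^4$. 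The paper circumvents this entirely: using \eqref{dety1} twice (the odd traces vanish because $U$ is anti-diagonal) it writes $a^{(4)}_u(\lam)={\rm det}_6({\rm I}-T_u(\lam))\exp\big(-\tfrac14\tr T_u^4(\lam)\big)$, estimates the ${\rm det}_6$ part through $\|T_u^6(\lam)\|_1\leq\|T_u(\lam)\|^4\|T_u(\lam)\|_2^2$ so that the operator-norm bound \eqref{eq:controlnorml0} with $p=4$ supplies the $L^4$ norms, and then computes $\tr T_u^4(\lam)$ \emph{in closed form} (formula \eqref{tr4}) so that the explicit exponential kernels of $(D\pm2\lam^2)^{-1}$ give $|\tr T_u^4(\lam)|\lesssim\frac{|\lam|^4}{(\im (\lam^2))^3}\|u\|_{L^4(\R)}^4$ by Young's inequality. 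This explicit residue computation of $\tr T_u^4$ is the missing ingredient; without it (or an equivalent Kato--Seiler--Simon type bound $\|f(x)g(D)\|_4\lesssim\|f\|_{L^4}\|g\|_{L^4}$, which you never actually state) the main term of \eqref{eq5} is not established.

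For \eqref{eq7} you correctly arrive at $\|T_{u_1-u_2}(\lam)\|_4\leq\|T_{u_1-u_2}(\lam)\|^{1/2}\|T_{u_1-u_2}(\lam)\|_2^{1/2}$ --- exactly the interpolation the paper uses --- but then you declare the resulting factor $\|u_1-u_2\|_{L^2(\R)}^{1/2}$ fatal and propose to re-estimate the $\mathscr{C}_2$ factor by an $L^4$ norm. That cannot work: \eqref{HSuseful} is an \emph{identity}, $\|T_v(\lam)\|_2^2=\frac{|\lam|^2}{\im(\lam^2)}\|v\|_{L^2(\R)}^2$, and a Hilbert--Schmidt norm is not controlled by $\|v\|_{L^4(\R)}$. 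The intended resolution is much simpler: bound $\|u_1-u_2\|_{L^2(\R)}^{1/2}\leq(\|u_1\|_{L^2(\R)}+\|u_2\|_{L^2(\R)})^{1/2}$ and absorb this, together with its power of $\frac{|\lam|}{\sqrt{\im(\lam^2)}}$, into the exponential prefactor of \eqref{eq7} via $x^{1/2}\leq Ce^{Cx^4}$; the right-hand side of \eqref{eq7} is not a pure $L^4$ modulus of continuity but carries the $L^2$ data inside the exponential. Your treatment of \eqref{eq6} is essentially the paper's (telescoping $\tr T_u^2(\lam)-i\|u\|_{L^2(\R)}^2$ into $-i\int\frac{p|\hat u(p)|^2}{p+2\lam^2}\,dp$, using $|p+2\lam^2|\geq 2\im(\lam^2)$, and interpolating $\|u\|_{L^4(\R)}^4\lesssim\|u\|_{L^2(\R)}^2\|u\|_{\dot H^{1/2}(\R)}^2$ to absorb the contribution of $a^{(4)}_u-1$), but it rests on \eqref{eq5}, so the gap above propagates to it.
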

\begin{proof} 
To prove the first inequality,  we use
\refeq{dety1} which according to  the fact that the matrix $U$ is anti-diagonal gives
$$ a^{(4)}_u(\lam)={\rm det}_6 ({\rm I}-T_u(\lam))  \exp \Big( - \frac {\tr T^4_u(\lam)} {4}\Big) \,  \cdotp$$
Invoking\refeq{detyest3},  we deduce  that there is a   positive constant  C such that $$ |a^{(4)}_u(\lam)- 1| \leq C e^{C \|T_u(\lam)\|^4_4} \big( \|T_u(\lam)\|^4 \|T_u(\lam)\|^2_2 + | \tr T^4_u(\lam) |   \big)  \,  \cdotp$$
Then, taking advantage of\refeq{HSuseful} and \eqref{eq:controlnorml0}, we infer that, for all $u$ in $L^2(\R) \cap L^4(\R)$, there holds
\begin{equation} \label{eq4}     |a^4_u(\lam)- 1| \leq C e^{C\frac {|\lam|^4} {({ \rm Im}  (\lam^2) )^2}\|u\|^4_{L^2(\R)}  }  \Big(\frac {|\lam|^4} {({ \rm Im}  (\lam^2) )^3}\|u\|^4_{L^4(\R)}  +     | \tr T^4_u(\lam)   |\Big) \cdotp \end{equation}  
We next compute $\tr T_u^4(\lam)$. In view of\refeq{formulaTu}, we have 
$$ \begin{aligned}  &T^4_u(\lam) =    \lam^4      \left(
\begin{array}{ccccccccc}
A(\lam)&0  \\  
0 & B(\lam)\end{array}
\right)  \, ,  \end{aligned}  $$
with 
\begin{eqnarray*}A(\lam)&=&(D + \lam^2)^{-1}u \, (D-  \lam^2)^{-1} \overline {u}(D + \lam^2)^{-1}u \, (D- \lam^2)^{-1} \overline {u} \\
B(\lam)&=&(D- \lam^2)^{-1} \overline {u}(D+ \lam^2)^{-1}u(D- \lam^2)^{-1} \overline {u}(D + \lam^2)^{-1}u \, .\end{eqnarray*} 
One can easily check  that  
$$ \begin{aligned}   \tr B(\lam)=\tr A(\lam)=  &  = \frac 1 {(2\pi)^2}  \int_{\R^4} d p dp_1dp_2 dp_3 \frac  { \hat u (p-p_1) \hat {\overline u} (p_1-p_2) \hat u (p_2-p_3) \hat {\overline u} (p_3-p) }
 {(p+\lam^2)(p_1-\lam^2)(p_2+\lam^2)(p_3-\lam^2)}  
\\ & =  \frac 1 {(2\pi)^2}  \int_{\R^3} d p dp_1dp_2   \hat u (p-p_1) \overline {\hat  u (p_2-p_1) }\hat u (p_2) \overline {\hat u (p)  }
 \, \cI(p, p_1, p_2, \lam^2)  \,   \virgp  \end{aligned} $$
where $$\cI(p, p_1, p_2, \lam^2) =  \int_{\R}  \frac  {d p_3 }
 {(p+p_3 +\lam^2)(p_1+p_3-\lam^2)(p_2+p_3 +\lam^2)(p_3-\lam^2)} \,  \cdotp$$
 Applying  the  Cauchy's residue theorem, we get
$$\cI(p, p_1, p_2, \lam^2) =    - 2i\pi \Big[    \frac  { 1}
 {(p_2+2\lam^2)(p- p_1+2 \lam^2)(p_2-p_1+2\lam^2)}  + \frac  { 1}
 {(p_2+2\lam^2)(p- p_1+2 \lam^2)(p+ 2 \lam^2)}\Big] 
   \cdotp  $$
This implies  that   
 $$
 \begin{aligned}
&  \tr T^4_u(\lam)  = - \frac {i\lam^4} {\pi }    \int_{\R^3} d p dp_1dp_2   \hat u (p-p_1) \overline {\hat  u (p_2-p_1)} \hat u (p_2) \overline {\hat u (p)  }
\\ &  \,  \quad \times  \Big[    \frac  { 1}
 {(p_2+2\lam^2)(p- p_1+2 \lam^2)(p_2-p_1+2\lam^2)}  + \frac  { 1}
 {(p+ 2 \lam^2)(p_2+2\lam^2)(p- p_1+2 \lam^2)}\Big]  \\
 & \qquad \qquad \qquad \qquad =-\frac{2i\lambda^4}{\pi}  \int_{\R^3} d p dp_1dp_2  \frac{ \hat u (p-p_1) \overline {\hat  u (p_2-p_1)} \hat u (p_2) \overline {\hat u (p)  }}
 {(p+ 2 \lam^2)(p_2+2\lam^2)(p- p_1+2 \lam^2)}\, \cdotp \end{aligned}
$$
Invoking Fourier-Plancherel formula, we deduce  that  \begin{equation}\label{tr4}
\tr T^4_u(\lam)=4i\lambda^4\int_\R dx\, {\bar u(x)}\left((D+2\lambda^2)^{-1}u(x)\right)^2(D-2\lam^2)^{-1}\bar u(x),\end{equation}
which, thanks to\refeq{kernel0}, shows that
$$
| \tr T^4_u(\lam)| \lesssim  \frac {|\lam|^4} {({ \rm Im}  (\lam^2) )^3}\|u\|^4_{L^4(\R)} .$$
According to\refeq{eq4},  this concludes the proof of \eqref{eq5}.

\medskip Let us now go to the proof of\refeq{eq6}.  For that purpose, we start by combining \eqref{HSuseful} together with\refeq{link24}, which implies that
\begin{equation}\label{l4.11}
\bigl| a_u(\lam)e^{\frac{i}2\|u\|^2_{L^2(\R)}} - 1\bigr| \leq   Ce^{C \frac {|\lam|^2} {{ \rm Im}  (\lam^2) }\|u\|^2_{L^2(\R)}  }\big(|a_{u}^{(4)} (\lam)-1| +|\tr T^2_u(\lam)-i\|u\|_{L^2(\R)}^2|\big).
\end{equation}
Since by
\eqref{trace2useful}, we have
\begin{equation}\label{l4.12}
\tr T^2_u(\lam)-i\|u\|_{L^2(\R)}^2=-i\int_{\R} d p \, \frac {p |\hat u (p)|^2 }   { p+ 2 \lam^2} \, \virgp \end{equation}  
we get \begin{equation}\label{l4.14}
\left|\tr T^2_u(\lam)-i\|u\|_{L^2(\R)}^2\right|\leq\frac{1}{2 \im (\lam^2)}\|u\|^2_{\dot H^{ \frac 1 2}(\R)} \, \cdotp \end{equation}  
Then invoking \eqref{eq5}, \eqref{l4.11}, \eqref{l4.14}
together with  the interpolation inequality
$$\|u\|^4_{L^{4}(\R)} \lesssim  \|u\|^2_{L^{2}(\R)} \|u\|^2_{\dot H^{ \frac 1 2}(\R)} \, ,$$
 we readily achieve the proof of the estimate \eqref{eq6}.
 
 \medskip
Finally, to establish \eqref{eq7}, we apply Estimate\refeq{detyest4} with $n=4$, which gives
\begin{equation*}\begin{split}
|a^{(4)}_{u_1} (\lam)-a^{(4)}_{u_2} (\lam)| &\leq C e^{ C(\|T_{u_1} (\lam)\|^4_4+ \|T_{u_2} (\lam)\|^4_4)}\|T_{(u_1-u_2)} (\lam)\|_4 \\
&\leq Ce^{ C(\|T_{u_1} (\lam)\|^4_4+ \|T_{u_2} (\lam)\|^4_4)}\|T_{(u_1-u_2)} (\lam)\|^{1/2}\|T_{(u_1-u_2)} (\lam)\|_2^{1/2}
 \, ,\end{split}\end{equation*}
which completes the proof of the estimate, thanks to\refeq{HSuseful} and\refeq{eq:controlnorml0}.
\end{proof} 

 \medbreak
 
 We will also need  the following  refinement  of the above estimates, that we formulate in terms of~$\ln\tilde a_u(\zeta)$ where, as above, $\tilde a_u(\zeta)=e^{\frac{i}2\|u\|^2_{L^2(\R)}}a_u(\sqrt{\zeta})$. 
 First note that by virtue of\refeq{lim1},~$\ln \tilde a_u(\zeta)$ is an holomorphic function of $\zeta$ in $\C_+$ with $\im \zeta$ sufficiently large
(depending on $\arg \zeta$), uniquely defined by the condition $\log \tilde a_u(\zeta)=o(1)$ as $|\zeta|\rightarrow \infty$.
In addition,  as soon as  $\|T_u(\sqrt{\zeta})\|<1$, it can be written as  a convergent series:
 \begin{equation} 
\label{loga}  \ln  \tilde a_u(\zeta) = \frac{i}2\|u\|^2_{L^2(\R)}- \sum^\infty_{k=2} \frac {\tr T_u^k(\sqrt{\zeta})} k \cdotp
\end{equation}
Denoting 
$$\Phi_{0, u}(\zeta)\eqdefa \frac{i}2\|u\|^2_{L^2(\R)}-\frac12 \tr T_u^2(\sqrt{\zeta})=\frac i 2
\int_{\R} d p \, \frac {p |\hat u (p)|^2 }   { p+ 2 \zeta}\, \virgp $$ we deduce that
\begin{equation}\label{gentr41}\Big|\ln  \tilde a_u(\zeta)-\Phi_{0, u}(\zeta)+\frac {\tr T_u^4(\sqrt{\zeta})} 4 \Big| \leq C \|T_u(\sqrt{\zeta})\|_2^2\|T_u(\sqrt{\zeta})\|^4,\end{equation}
provided that $\ds \|T_u(\sqrt{\zeta})\|\leq \frac12 \cdotp$

\medskip Note also that by \eqref{tr4},   for any $0\leq s\leq 1$,  there holds 
\begin{equation}\label{tr41}
\Big|\tr T_u^4(\sqrt{\zeta})+\frac{i}{2\zeta}\|u\|^4_{L^4(\R)}\Big|\leq C \frac{|\zeta|^2}{(\im \zeta)^{3+s}}\|u\|^3_{L^4(\R)}\|u\|_{\dot H^{\frac14+s}(\R)},
\,\, \forall \, u\in H^{\frac14+s}(\R),\,\,\zeta\in \C_+.
\end{equation}
Therefore, gathering the two latter estimates and 
taking into account  \refeq{HSuseful} and \eqref{eq:controlnorml0},  we obtain:

 \begin{lemma} 
\label{lema4first}
{\sl There exists  a positive constant $\kappa$ such that, for
any $\ds 0\leq s< \frac14\virgp$ one has: 
 $$\Big|\ln  \tilde a_u(\zeta)-\Phi_{0, u}(\zeta)- \frac{i}{8\zeta}\|u\|^4_{L^4(\R)} \Big|\leq C_s \left(1+\frac{|\zeta|}{\im \zeta}\|u\|^2_{L^2(\R)}\right)
\frac{|\zeta|^2}{(\im \zeta)^{3+s}}
\|u\|^3_{L^4(\R)}\|u\|_{\dot H^{\frac14+s}(\R)},$$
for all
$u\in H^{\frac14 +s}(\R)$ and all $\zeta\in \C_+$ satisfying $\ds \frac{|\zeta|}{(\im \zeta)^{3/2}}\|u\|^2_{L^4(\R)}\leq \kappa\virgp$ with some positive constant~$C_s$.}
\end{lemma}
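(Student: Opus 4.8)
The plan is to combine the two estimates \eqref{gentr41} and \eqref{tr41} already at hand, controlling the operator norms that appear in them by means of \eqref{HSuseful} and \eqref{eq:controlnorml0}.

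First I would fix the constant $\kappa$. Applying \eqref{eq:controlnorml0} with $p=4$ and $\lambda=\sqrt{\zeta}$ gives
\[
\|T_u(\sqrt{\zeta})\|\leq C\,\frac{|\zeta|^{1/2}\|u\|_{L^4(\R)}}{(\im \zeta)^{3/4}}=C\Big(\frac{|\zeta|}{(\im \zeta)^{3/2}}\|u\|_{L^4(\R)}^2\Big)^{1/2},
\]
so that, choosing $\kappa$ small enough, the hypothesis $\frac{|\zeta|}{(\im \zeta)^{3/2}}\|u\|_{L^4(\R)}^2\leq\kappa$ forces $\|T_u(\sqrt{\zeta})\|\leq\frac12$; in particular $\ln\tilde a_u(\zeta)$ is given by the convergent series \eqref{loga} and \eqref{gentr41} applies, yielding
\[
\Big|\ln\tilde a_u(\zeta)-\Phi_{0,u}(\zeta)+\tfrac14\tr T_u^4(\sqrt{\zeta})\Big|\leq C\,\|T_u(\sqrt{\zeta})\|_2^2\,\|T_u(\sqrt{\zeta})\|^4.
\]
On the other hand, \eqref{tr41} with the very same $s<\frac14$ (hence in particular $0\leq s\leq 1$) gives, after dividing by $4$,
\[
\Big|\tfrac14\tr T_u^4(\sqrt{\zeta})+\frac{i}{8\zeta}\|u\|_{L^4(\R)}^4\Big|\leq \frac{C}{4}\,\frac{|\zeta|^2}{(\im \zeta)^{3+s}}\|u\|_{L^4(\R)}^3\|u\|_{\dot H^{\frac14+s}(\R)}.
\]
Adding these two inequalities, the matter reduces to estimating the remainder $\|T_u(\sqrt{\zeta})\|_2^2\,\|T_u(\sqrt{\zeta})\|^4$ by the right-hand side of the claimed bound.

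To that end I would use \eqref{HSuseful} to write $\|T_u(\sqrt{\zeta})\|_2^2=\frac{|\zeta|}{\im \zeta}\|u\|_{L^2(\R)}^2$, and split $\|T_u(\sqrt{\zeta})\|^4=\|T_u(\sqrt{\zeta})\|^3\cdot\|T_u(\sqrt{\zeta})\|$. For the cubic factor I apply \eqref{eq:controlnorml0} with $p=4$ once more; for the remaining linear factor I apply \eqref{eq:controlnorml0} with $p=q:=\frac{4}{1-4s}$ — which is finite precisely because $s<\frac14$ — together with the one-dimensional Sobolev embedding $\dot H^{\frac14+s}(\R)\hookrightarrow L^{q}(\R)$, so that $\|T_u(\sqrt{\zeta})\|\lesssim_s\frac{|\zeta|^{1/2}}{(\im \zeta)^{(3+4s)/4}}\|u\|_{\dot H^{\frac14+s}(\R)}$. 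Multiplying the three bounds, the powers of $|\zeta|$ add up to $1+\frac32+\frac12=3$ and those of $\im \zeta$ to $1+\frac94+\frac{3+4s}{4}=4+s$, whence
\[
\|T_u(\sqrt{\zeta})\|_2^2\,\|T_u(\sqrt{\zeta})\|^4\lesssim_s\frac{|\zeta|}{\im \zeta}\|u\|_{L^2(\R)}^2\cdot\frac{|\zeta|^2}{(\im \zeta)^{3+s}}\|u\|_{L^4(\R)}^3\|u\|_{\dot H^{\frac14+s}(\R)}.
\]
Together with the two previous displays and the trivial inequality $\frac{|\zeta|}{\im \zeta}\|u\|_{L^2(\R)}^2\leq 1+\frac{|\zeta|}{\im \zeta}\|u\|_{L^2(\R)}^2$, this yields exactly the asserted estimate, with $C_s$ an appropriate multiple of the constants involved.

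The only genuinely delicate point is the power counting in the last step: the linear factor $\|T_u(\sqrt{\zeta})\|$ must be estimated in terms of $\|u\|_{\dot H^{\frac14+s}(\R)}$ rather than $\|u\|_{L^4(\R)}$ — the latter choice would cost an extra power of $\im \zeta$ and destroy the balance — and this is what forces the use of the $L^q$-Sobolev embedding, available only for $s<\frac14$ (at the endpoint $s=\frac14$ the target $\dot H^{1/2}(\R)$ embeds into $\mathrm{BMO}$ but not into $L^\infty$). Everything else is a routine combination of the cited estimates.
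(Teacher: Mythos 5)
Your proposal is correct and follows exactly the route the paper indicates: it combines \eqref{gentr41} and \eqref{tr41} and controls the remainder $\|T_u(\sqrt{\zeta})\|_2^2\|T_u(\sqrt{\zeta})\|^4$ via \eqref{HSuseful} and \eqref{eq:controlnorml0}, which is all the paper itself says before stating the lemma. Your power counting (in particular the use of $p=\frac{4}{1-4s}$ and the Sobolev embedding $\dot H^{\frac14+s}(\R)\hookrightarrow L^{\frac{4}{1-4s}}(\R)$ for one of the four operator-norm factors) checks out and supplies the detail the paper leaves implicit.
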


\medbreak 

\subsubsection{Resolvent estimates} Consider the resolvent $L_u^{-1}(\lam)$.  For any $u$ in $L^2(\R)$, $L_u^{-1}(\lam)$
is a meromorphic function of $\lambda$ in $\Omega_+$ with values in the space of bounded operators on~$L^2(\R, \C^2)$, whose poles coincide with the zeros of $a_u$. In addition, it admits the following estimate:\begin{proposition} 
\label{resest}
{\sl There exists a positive  constant~$C$ such that, 
for all $u$ in $L^2(\R)$ and all $\lam$ in~$\Omega_{+}$, we have 
\begin{equation}
\label{inverseop}  \|L_u^{-1}(\lam)\|\leq \exp\Big( C \frac {|\lam|^2 } {{ \rm Im} (\lam^2)} \|u\|^2_{L^2(\R)} \Big) \frac {C } {|a_u(\lam)|\,  { \rm Im} (\lam^2)} \, ,\end{equation} provided that $a_u(\lam)\neq 0$.}
\end{proposition}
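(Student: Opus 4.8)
The plan is to deduce \eqref{inverseop} from the operator factorization
$$L_u(\lam)=\cL_0-\lam^2-i\lam U=(\cL_0-\lam^2)\big({\rm I}-T_u(\lam)\big),\qquad\lam\in\Omega_+,$$
which is immediate from the definition $L_u(\lam)=i\sigma_3\partial_x-\lam^2-i\lam U$ and from \eqref{T}, i.e. $T_u(\lam)=i\lam(\cL_0-\lam^2)^{-1}U$. Since $\cL_0=i\sigma_3\partial_x$ is self-adjoint on $L^2(\R,\C^2)$ with spectrum $\R$ (or, equivalently, by the explicit kernel \eqref{kernel0}), the operator $\cL_0-\lam^2$ is boundedly invertible on $L^2(\R,\C^2)$ for every $\lam\in\Omega_+$, with $\big\|(\cL_0-\lam^2)^{-1}\big\|=({\rm Im}(\lam^2))^{-1}$. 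Hence $L_u(\lam)$ is invertible precisely when ${\rm I}-T_u(\lam)$ is, in which case $L_u^{-1}(\lam)=({\rm I}-T_u(\lam))^{-1}(\cL_0-\lam^2)^{-1}$ and therefore
$$\|L_u^{-1}(\lam)\|\le\big\|({\rm I}-T_u(\lam))^{-1}\big\|\;\frac1{{\rm Im}(\lam^2)}\cdotp$$
(This factorization, together with $a_u={\rm det}_2({\rm I}-T_u)$ from \eqref{awithdet} and the analyticity of $\lam\mapsto T_u(\lam)$ on $\Omega_+$ with values in $\mathscr{C}_2$, also underlies the meromorphy of $L_u^{-1}$ with poles at the zeros of $a_u$ asserted before \eqref{inverseop}.)

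It then only remains to estimate $\big\|({\rm I}-T_u(\lam))^{-1}\big\|$. Here I would invoke the Carleman-type resolvent bound for Hilbert--Schmidt operators recorded in Appendix~\ref{basicdeterminants}: for $A\in\mathscr{C}_2$ with ${\rm det}_2({\rm I}-A)\ne0$, the operator ${\rm I}-A$ is invertible and $\big\|({\rm I}-A)^{-1}\big\|\le C\,|{\rm det}_2({\rm I}-A)|^{-1}e^{C\|A\|_2^2}$ for a universal constant $C$. Applying this with $A=T_u(\lam)$, using $a_u(\lam)={\rm det}_2({\rm I}-T_u(\lam))$ and the identity $\|T_u(\lam)\|_2^2=\frac{|\lam|^2}{{\rm Im}(\lam^2)}\|u\|_{L^2(\R)}^2$ from \eqref{HSuseful}, yields
$$\big\|({\rm I}-T_u(\lam))^{-1}\big\|\le\frac{C}{|a_u(\lam)|}\exp\Big(C\,\frac{|\lam|^2}{{\rm Im}(\lam^2)}\|u\|_{L^2(\R)}^2\Big),\qquad a_u(\lam)\ne0,$$
and combining this with the previous display gives exactly \eqref{inverseop}.

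I do not expect a genuine obstacle here: the argument is nothing but the factorization plus the two cited inputs. The one point that really requires care --- and the reason the whole analysis is built on ${\rm det}_2$ rather than on an ordinary Fredholm determinant --- is that for a generic $L^2(\R)$ potential $u$ the operator $T_u(\lam)$ is only Hilbert--Schmidt and not trace class; one must therefore work with the $2$-regularized determinant ${\rm det}_2({\rm I}-T_u)$ and its associated regularized adjugate. This is precisely why it is the square of the Hilbert--Schmidt norm, $\|T_u(\lam)\|_2^2=\frac{|\lam|^2}{{\rm Im}(\lam^2)}\|u\|_{L^2(\R)}^2$, that enters the exponent, which matches the weight $\frac{|\lam|^2}{{\rm Im}(\lam^2)}$ appearing in \eqref{inverseop}.
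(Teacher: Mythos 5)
Your proposal is correct and follows essentially the same route as the paper: the factorization $L_u^{-1}(\lam)=({\rm I}-T_u(\lam))^{-1}(\cL_0-\lam^2)^{-1}$ combined with Proposition~\ref{defdetyrk}~(3) (the regularized-determinant resolvent bound, with $n=2$) and the Hilbert--Schmidt identity \eqref{HSuseful}. The paper's proof is exactly this two-line argument, so nothing is missing.
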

\begin{proof}
 Taking into account that
$${L}^{-1}_u(\lam)= ({\rm I}-T_u(\lam))^{-1} (\cL_0 - \lam^2)^{-1} \,, $$ 
the result follows immediately from Proposition \ref{defdetyrk} (3) and Identity \eqref{HSuseful}.  
\end{proof}

\medbreak 

\subsubsection{Bounds on the number of the eigenvalues of $L_u(\lam)$}
We start by observing that due to\refeq{size} and\refeq{mass1}, for any $u\in \mathcal{S}_{reg}$ and any $\theta \in ]0, \pi[$,
we have:
\begin{equation}
\label{controbound} \sharp \big\{ \zeta\in \C_+: \, \tilde  a_{u}(\zeta)= 0,  \, \,  \theta < \arg \zeta< \pi \big\}  \leq \frac{\|u\|^2_{L^2(\R)}}{4\theta} \cdotp \end{equation}
Furthermore, 
combining the density of $\mathcal{S}_{reg}$ in $L^2$ together  with the stability estimate 
\eqref{eqstab} and Corollary\refer{coruse}, 
we infer  that this inequality remains valid for $u\in L^2$. From\refeq{mass1} we also deduce:

 \begin{lemma} 
\label{intrel}
{\sl Let~$u$ be a function of   $\cS_{\rm reg} (\R)$  and $\theta \in ]0, \pi[$ such that $\tilde a_u(\zeta) \neq 0$, for all $\zeta$ in $\C_+$ with
$\arg \zeta = \theta$. Then  
\begin{equation}
\label{controlpert} 
\sharp \big\{ \zeta\in \C_+: \, \tilde  a_{u}(\zeta)= 0,  \, \,  \theta < \arg \zeta< \pi \big\}=
\frac{1}{2i\pi }\int\limits^{+ \infty \, e^{i\theta}}_0 \frac {\tilde a'_{u}(s)}  {\tilde  a_{u}(s)} ds+\frac1{4\pi}{\|u\|^ 2_{L^2(\R)}} \virgp\end{equation}
 where, all along this paper,    $ \ds \int\limits^{+ \infty \, e^{i\theta}}_0 ds $ denotes the integral along the path 
$\gamma \eqdefa \big\{z= \rho \, e^{i \theta},  \, \,  \rho \in \R_+ \big\}\cdot$}
\end{lemma}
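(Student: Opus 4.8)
The statement is essentially an application of the argument principle combined with the trace formula \eqref{mass1}. The plan is to compute the number of zeros of $\tilde a_u$ in the angular sector $\{\theta < \arg\zeta < \pi\}$ by a contour integral of $\tilde a'_u/\tilde a_u$ and then identify the contributions of the various pieces of the contour with the two terms on the right-hand side of \eqref{controlpert}. Since $u\in\cS_{\rm reg}(\R)$, the function $\tilde a_u$ is holomorphic in $\C_+$, smooth up to $\R$, has only finitely many zeros in $\C_+$ (all simple, by definition of $\cS_{\rm reg}$), and by hypothesis has no zero on the ray $\gamma=\{\rho e^{i\theta}:\rho\ge 0\}$. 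Moreover $\tilde a_u$ does not vanish on $\R_+$ (again by $\cS_{\rm reg}$), and $|\tilde a_u|\ge 1$ on $\R_-$ by \eqref{size}, so $\tilde a_u$ is zero-free on all of $\R$; combined with \eqref{lim1} (i.e. $\tilde a_u\to 1$ at infinity, uniformly away from $\R_+$), the zeros in the sector are confined to a compact region.

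First I would fix a large radius $R$ and a small radius $\varepsilon$ and integrate $\frac{1}{2i\pi}\,\frac{\tilde a'_u}{\tilde a_u}$ around the boundary of the truncated sector $\{\varepsilon<|\zeta|<R,\ \theta<\arg\zeta<\pi\}$, oriented positively. By the argument principle this equals the number of zeros of $\tilde a_u$ inside, which for $\varepsilon$ small and $R$ large is exactly $\sharp\{\zeta\in\C_+:\tilde a_u(\zeta)=0,\ \theta<\arg\zeta<\pi\}$. The boundary consists of four pieces: the segment of the ray $\gamma$ from $\varepsilon e^{i\theta}$ to $Re^{i\theta}$; the large arc $\{|\zeta|=R,\ \theta<\arg\zeta<\pi\}$; the segment of $\R_-$ from $-R$ to $-\varepsilon$; and the small arc $\{|\zeta|=\varepsilon,\ \theta<\arg\zeta<\pi\}$ traversed in the negative sense. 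On the large arc, \eqref{lim1} gives $\tilde a_u\to 1$ and in fact one needs $\zeta\,\tilde a'_u/\tilde a_u\to 0$ there, which follows from the asymptotic expansion \eqref{as} of $\ln\tilde a_u$ (the expansion starts at order $\zeta^{-1}$, so $\tilde a'_u/\tilde a_u = O(\zeta^{-2})$); hence the large-arc integral tends to $0$ as $R\to\infty$. On the small arc, $\tilde a_u\to\tilde a_u(0)=e^{\frac i2\|u\|^2_{L^2}}\neq 0$ by \eqref{size}, so $\tilde a'_u/\tilde a_u$ is bounded near $0$ and the small-arc integral tends to $0$ as $\varepsilon\to 0$. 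The ray segment converges to $\frac{1}{2i\pi}\int_0^{+\infty e^{i\theta}}\frac{\tilde a'_u(s)}{\tilde a_u(s)}\,ds$, which is the first term on the right of \eqref{controlpert}; this integral is absolutely convergent near $0$ (bounded integrand) and near $\infty$ (integrand $O(s^{-2})$ by \eqref{as}), and well defined since $\gamma$ avoids the zeros by hypothesis.

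It remains to evaluate the contribution of the negative real axis, $\frac{1}{2i\pi}\int_{-R}^{-\varepsilon}\frac{\tilde a'_u(\xi)}{\tilde a_u(\xi)}\,d\xi$, traversed from $-R$ to $-\varepsilon$ (the induced orientation of the sector's boundary). This equals $\frac{1}{2i\pi}\big(\ln\tilde a_u(-\varepsilon)-\ln\tilde a_u(-R)\big)$ for a suitable continuous branch of $\ln\tilde a_u$ along $\R_-$; by \eqref{lim1} the endpoint $-R$ contributes $0$ in the limit, and by \eqref{size} the endpoint $-\varepsilon$ contributes $\frac{1}{2i\pi}\ln\tilde a_u(0)=\frac{1}{2i\pi}\cdot\frac i2\|u\|^2_{L^2(\R)}=\frac{1}{4\pi}\|u\|^2_{L^2(\R)}$. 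One must check that the branch of $\ln\tilde a_u$ used here is the one continuously connected, through the sector, to the branch that is $o(1)$ at infinity — this is where I would be most careful, since an erroneous choice of branch would shift the answer by an integer multiple of $1$; the consistency is guaranteed by the fact that we are computing a single contour integral of the globally defined holomorphic function $\tilde a'_u/\tilde a_u$, so no branch ambiguity actually arises once the computation is organized as above. The main (though mild) obstacle is therefore purely bookkeeping: tracking orientations of the four boundary pieces and confirming the decay on the two arcs via \eqref{as}–\eqref{lim1}; assembling these four limits yields exactly \eqref{controlpert}. As a sanity check, letting $\theta\to 0$ one recovers, via $\frac{1}{2i\pi}\int_0^{+\infty}\frac{\tilde a'_u}{\tilde a_u}ds = -\frac{1}{2\pi}\int_0^\infty\frac{d\xi}{\xi}\,\ln|\tilde a_u(\xi)|^2 \cdot(\tfrac12)$ together with the analogous expression on $\R_-$, a formula consistent with the trace formula \eqref{mass1} for $M(u)$.
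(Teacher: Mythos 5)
Your overall strategy differs from the paper's: you close a contour around the truncated sector and invoke the argument principle, whereas the paper never closes a contour --- it computes the ray integral directly from the explicit factorization \eqref{devol} (each Blaschke factor contributes $2i\pi\,\mathbf{1}_{\{\arg\zeta_j>\theta\}}-2i\arg\zeta_j$, the outer part contributes $-\frac1{2i\pi}\int\frac{d\xi}{\xi}\ln|\tilde a_u(\xi)|^2$) and then substitutes the trace formula \eqref{mass1}. Your treatment of the ray, the large arc (decay of $\tilde a_u'/\tilde a_u$ from \eqref{as}) and the small arc is fine. But the step you yourself flag as delicate --- the evaluation of the negative--real--axis contribution --- contains a genuine gap, and the justification you offer does not close it. The integral $\frac1{2i\pi}\int_{-R}^{-\e}\frac{\tilde a_u'}{\tilde a_u}\,d\xi$ equals $\frac1{2i\pi}\bigl(\ln\tilde a_u(-\e)-\ln\tilde a_u(-R)\bigr)$ for the branch of $\ln\tilde a_u$ obtained by \emph{continuous continuation along $\R_-$} from the normalization $\ln\tilde a_u\to0$ at $-\infty$. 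The identity $\tilde a_u(0)=e^{\frac{i}{2}\|u\|^2_{L^2(\R)}}$ from \eqref{size} determines this limit only modulo $2i\pi$, so your evaluation gives $\frac1{4\pi}\|u\|^2_{L^2(\R)}+k$ for some unknown $k\in\ZZ$. Saying that ``no branch ambiguity arises because we are computing a single contour integral of the globally defined function $\tilde a_u'/\tilde a_u$'' is true of the \emph{total} contour integral but does not evaluate the $\R_-$ piece; using it to fix $k$ would be circular. An integer ambiguity is exactly what separates the statement you are proving from the weaker Lemma\refer{intrelrootsnew} (the ``mod $\N$'' version), so it cannot be waved away.

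The gap is repairable in two ways. Either reproduce the winding computation that underlies \eqref{mass1}: writing $\zeta_j=|\zeta_j|e^{i\alpha_j}$ with $\alpha_j\in(0,\pi)$, the factor $\arg\frac{\xi-\zeta_j}{\xi-\bar\zeta_j}$ increases continuously from $0$ to $2\alpha_j$ as $\xi$ runs over $\R_-$ from $-\infty$ to $0$ (the numerator stays in the lower half-plane, the denominator in the upper), and the outer factor contributes $-\frac1{2\pi}\int\frac{d\xi}{\xi}\ln|\tilde a_u(\xi)|^2$, so that the continued branch satisfies $\im\ln\tilde a_u(0^-)=2\sum_j\arg\zeta_j-\frac1{2\pi}\int\frac{d\xi}{\xi}\ln|\tilde a_u(\xi)|^2=\frac12\|u\|^2_{L^2(\R)}$ exactly, by \eqref{mass1}. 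Or use a homotopy: by \eqref{size}, $|\tilde a_{tu}|\geq1$ on $\R_-$ for every $t\in[0,1]$, so $t\mapsto\frac1{2i\pi}\int_{-\infty}^0\frac{\tilde a_{tu}'}{\tilde a_{tu}}\,d\xi-\frac{t^2}{4\pi}\|u\|^2_{L^2(\R)}$ is a continuous $\ZZ$-valued function, hence identically $0$. Either argument amounts to re-deriving the content of \eqref{mass1}, which is precisely the ingredient the paper's proof imports ready-made.
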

\begin{proof} This lemma is a straightforward consequence of the analyticity of $\tilde a_u$  and of the  asymptotics\refeq{as}. Indeed, 
given  $u$ in $\cS_{\rm reg} (\R)$,
denote by $\zeta_j$, $j=1, \dots , N$,  the zeros of $\tilde a_u(\zeta)$ in the upper half plane and by $n$ the number of the zeros 
in the angle $\{\theta<\arg \zeta<\pi\}\cdotp$ Then, taking advantage of Formula\refeq{devol}, we easily get  
\begin{equation*}
\begin{split}
\int^{+ \infty e^{i\theta}}_0 ds  \frac {\tilde a'_{u}(s)}  {\tilde  a_{u}(s)}=& \sum\limits^N_{j=1} \int^{+ \infty e^{i\theta}}_0 ds \Big(\frac 1 {s -\zeta_j}  - \frac 1 {s - \overline \zeta_j}\Big) -
\frac 1 {2i\pi} \int^\infty_{-\infty} \frac {d\xi} {\xi} \, \log |\tilde a_u(\xi)|^2 \\
=&2i\pi n-2i\sum\limits_{j=1}^N\arg\zeta_j-
\frac 1 {2i\pi} \int^\infty_{-\infty} \frac {d\xi} {\xi} \, \log |\tilde a_u(\xi)|^2\\
=&2i\pi n-\frac{i}2{\|u\|^ 2_{L^2(\R)}}\, \virgp
\end{split}
\end{equation*}
which completes the proof of \eqref{controlpert}. 
\end{proof}
\medbreak

In the proof of Theorem \ref{Mainth},  we will also need   the following $L^2$  version  of Lemma\refer{intrel}.
\begin{lemma} 
\label{intrelrootsnew}
{\sl Let~$u$ be a function in $L^2$ and $\theta\in ]0,\pi[$ such that   $\tilde a_{u}(\zeta) \neq 0$, for all $\zeta$ belonging to the ray $e^{i\theta}\R_+$. Then,
\begin{equation}
\label{calcullimitnew} \frac{1}{2i\pi} \int\limits^{+ \infty \, e^{i \theta}}_{0}ds \, \frac { \tilde  a'_{u}(s)}  {\tilde  a_{u}(s)}+   \frac 1 {4 \pi} \|u\|^ 2_{L^2(\R)} \in \N \,\cdot\end{equation}}
\end{lemma}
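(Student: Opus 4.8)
The plan is to derive Lemma \ref{intrelrootsnew} from Lemma \ref{intrel} by a density argument, using the stability estimates of Proposition \ref{stabilityestnew} to pass from $\cS_{\rm reg}(\R)$ to $L^2(\R)$. First I would fix $u\in L^2$ and $\theta\in]0,\pi[$ with $\tilde a_u(\zeta)\neq 0$ on the ray $e^{i\theta}\R_+$. Pick a sequence $(u_m)\subset\cS_{\rm reg}(\R)$ with $u_m\to u$ in $L^2$; by Lemma \ref{intrel}, for every $m$ (at least for those $m$ for which $\tilde a_{u_m}$ does not vanish on $e^{i\theta}\R_+$) the quantity
\begin{equation*}
N_m\eqdefa\frac{1}{2i\pi}\int\limits^{+\infty e^{i\theta}}_0\frac{\tilde a'_{u_m}(s)}{\tilde a_{u_m}(s)}\,ds+\frac1{4\pi}\|u_m\|^2_{L^2(\R)}
\end{equation*}
is a nonnegative integer (it counts zeros of $\tilde a_{u_m}$ in the sector $\theta<\arg\zeta<\pi$). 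Since $\N$ is closed in $\R$, it suffices to show $N_m$ converges to the left-hand side of \eqref{calcullimitnew}.

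The term $\frac1{4\pi}\|u_m\|^2_{L^2}\to\frac1{4\pi}\|u\|^2_{L^2}$ is immediate. For the contour integral, the key is the local uniform convergence $a_{u_m}(\lam)\to a_u(\lam)$ on $\Omega_+$ — and hence $\tilde a_{u_m}\to\tilde a_u$ locally uniformly on $\C_+$ — which follows from the stability estimate \eqref{eqstab}: on any compact subset of $\Omega_+$ the exponential prefactor is bounded (the masses $\|u_m\|^2_{L^2}$ being bounded) and $\|u_m-u\|_{L^2}\to 0$. By Cauchy's estimates, $\tilde a'_{u_m}\to\tilde a'_u$ locally uniformly as well. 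Combined with the hypothesis $\tilde a_u\neq 0$ on the ray, this gives $\tilde a_{u_m}/\tilde a_{u_m}$... more precisely $\tilde a'_{u_m}/\tilde a_{u_m}\to\tilde a'_u/\tilde a_u$ uniformly on compact subsets of the ray, and in particular $\tilde a_{u_m}$ is eventually non-vanishing on any fixed compact piece of the ray so that $N_m$ makes sense for $m$ large.

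The genuine obstacle is the behavior of the integrand near the two endpoints of the ray, $s\to 0$ and $|s|\to\infty$, where one needs integrability and convergence uniform in $m$. Near $\zeta=0$: by Corollary \ref{coruse}, $a_{u_m}(\lam)\to 1$ as $\lam\to0$ within any $\Gamma_\delta$, uniformly in $m$ thanks to the exponential bound in \eqref{studyau} being uniform over the bounded family $(u_m)$; so $\tilde a_{u_m}(\zeta)\to e^{\frac i2\|u_m\|^2}$, bounded away from $0$, and the quantitative estimate \eqref{studyau} (or \eqref{eq6}) shows $|\tilde a_{u_m}(\zeta)-1|\lesssim |\zeta|^{1/2}\big(\cdots\big)$ uniformly in $m$, which forces $\tilde a'_{u_m}/\tilde a_{u_m}$ to be integrable near $0$ uniformly in $m$ (again via Cauchy's estimates on a shrinking disk). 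Near $|\zeta|=\infty$: by \eqref{as}–\eqref{loga} and Lemma \ref{lema4first}, for $|\zeta|$ large $\ln\tilde a_{u_m}(\zeta)=E_1(u_m)\zeta^{-1}+O(|\zeta|^{-2})$ with constants controlled by Sobolev norms of $u_m$; but here $u_m$ only converges in $L^2$, so those higher norms may blow up. To circumvent this I would not use the asymptotic expansion but rather integrate by parts / deform the contour: since $\ln\tilde a_{u_m}(\zeta)\to 0$ as $|\zeta|\to\infty$ (uniformly on $\Gamma_\delta$, by \eqref{lim1} and the uniform exponential bound), we may write $\int_0^{+\infty e^{i\theta}}\frac{\tilde a'_{u_m}}{\tilde a_{u_m}}\,ds = -\ln\tilde a_{u_m}(0)=-\frac i2\|u_m\|^2_{L^2}$ plus the contribution of the zeros enclosed — which is exactly the content of Lemma \ref{intrel} restated. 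So in fact the cleanest route is: for $u\in L^2$, choose $\theta'$ close to $\theta$ with no zeros of $\tilde a_u$ in the thin sector between the rays $e^{i\theta}\R_+$ and $e^{i\theta'}\R_+$; approximate, apply Lemma \ref{intrel} to $u_m$ on the ray $e^{i\theta}\R_+$, and pass to the limit using the local uniform convergence of $\tilde a'_{u_m}/\tilde a_{u_m}$ together with the two uniform endpoint estimates just described. Passing the limit through the integral is then justified by dominated convergence with a dominating function built from \eqref{eqstab}, \eqref{studyau} and \eqref{lim1}. The limit is $\frac{1}{2i\pi}\int_0^{+\infty e^{i\theta}}\frac{\tilde a'_u}{\tilde a_u}\,ds+\frac1{4\pi}\|u\|^2_{L^2}$, and being a limit of elements of $\N$, it lies in $\N$, which is \eqref{calcullimitnew}.
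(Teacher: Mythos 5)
Your proposal is correct and follows essentially the same route as the paper: approximate $u$ in $L^2$ by a sequence in $\cS_{\rm reg}(\R)$, invoke Lemma\refer{intrel} to get integrality for the approximants, and pass to the limit using the stability estimate \eqref{eqstab} (whose constant is uniform along the ray) together with Corollary\refer{coruse} to control the endpoints, concluding by the closedness of $\N$. The paper's proof is just a terser version of this; your additional discussion of the endpoint uniformity fills in details the paper leaves implicit.
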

\begin{proof}  
First observe that in view of  Corollary \ref{coruse}, the integral $\ds \int\limits^{+ \infty \, e^{i \theta}}_{0}ds \, \frac { \tilde  a'_{u}(s)}  {\tilde  a_{u}(s)}$ makes sense.  In order to establish \eqref{calcullimitnew},  we proceed by approximation: let $(u_n)_{n \in \N}$ be a sequence of  $\cS_{\rm reg} (\R)$ converging  to~$u$ in $L^2$. The stability estimate \eqref{eqstab} together with Corollary \ref{coruse}
ensures that, for~$n$ sufficiently large,   $\tilde a_{u_n}$ does not vanish on the ray $e^{i\theta}\R_+\virgp$  and one has
$$ \int\limits^{+ \infty \, e^{i \theta}}_{0}ds \, \frac { \tilde  a'_{u_n}(s)}  {\tilde  a_{u_n}(s)}
 \stackrel{n \to \infty}\longrightarrow  \int\limits^{+ \infty \, e^{i \theta}}_{0}ds \, \frac { \tilde  a'_{u}(s)}  {\tilde  a_{u}(s)}\cdotp$$
Since by virtue of \eqref{controlpert},
$$
\frac{1}{2i\pi} \int\limits^{+ \infty \, e^{i \theta}}_{0}ds \, \frac { \tilde  a'_{u_n}(s)}  {\tilde  a_{u_n}(s)}+   \frac 1 {4 \pi} \|u\|^ 2_{L^2(\R)} \in \N \virgp $$
Lemma\refer{intrelrootsnew} follows by passing to the limit~$n\rightarrow \infty$.
\end{proof}

\medbreak 

We next show that the real parts of the zeros of $\tilde a_u(\zeta)$ are low-bounded uniformly with respect to $u$ in bounded sets of
$H^{\frac12}$.
\begin{lemma}
\label{lemmcont}
{\sl  Let $u\in H^{\frac12}(\R)$. There exists a positive constant~$C$ depending only on~$\|u\|_{H^{\frac 1 2}(\R)}$, such that 
the function $\tilde a_u(\zeta)$ has no zeros in the region $\{\zeta\in \C_+: \, \re \zeta\leq -C \} \cdot$}
\end{lemma}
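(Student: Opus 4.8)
\textit{Proof plan.} The plan is to argue by contradiction: suppose $\zeta_0\in\C_+$ satisfies $\tilde a_u(\zeta_0)=0$ and put $\lambda_0=\sqrt{\zeta_0}\in\Omega_+$, so that $a_u(\lambda_0)=0$; I will show that $|\zeta_0|$ is bounded from above by a quantity depending only on $\|u\|_{H^{\frac12}(\R)}$, which at once gives the claim (choosing $C$ above that bound). Since $a_u(\lambda_0)={\rm det}_2({\rm I}-T_u(\lambda_0))=0$ and $T_u(\lambda_0)\in\mathscr{C}_2$, the value $1$ is an eigenvalue of $T_u(\lambda_0)$; equivalently (cf. the discussion following \eqref{w}) there is $\psi\in L^2(\R,\C^2)\setminus\{0\}$ with $L_u(\lambda_0)\psi=0$, i.e. $\cL_0\psi=(\zeta_0+i\lambda_0 U)\psi$, where $\cL_0=i\sigma_3\partial_x$. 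A short bootstrap based on the explicit kernel \eqref{kernel0} ($U\psi\in L^1$, hence $\psi\in L^\infty$, hence $U\psi\in L^2$) shows $\psi\in H^1(\R,\C^2)$, so $\psi$ vanishes at $\pm\infty$ and the manipulations below are licit; note also that $\lambda_0\notin i\R$, since $\lambda_0^2=\zeta_0\in\C_+$.

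Testing the equation against $\psi$ and using that $\cL_0$ is symmetric on $H^1$ while $U$ is a Hermitian multiplication operator, the numbers $A:=\langle\cL_0\psi,\psi\rangle$ and $B:=\langle U\psi,\psi\rangle$ are real and satisfy $A=\zeta_0\|\psi\|_{L^2}^2+i\lambda_0 B$. Taking imaginary parts (using $\re\lambda_0\neq0$) gives $B=-2(\im\lambda_0)\|\psi\|_{L^2}^2$, and then the real part gives $A=|\zeta_0|\,\|\psi\|_{L^2}^2$. Since $\lambda_0^2=\zeta_0$ one has $2(\im\lambda_0)^2=|\zeta_0|-\re\zeta_0\ge|\zeta_0|$ whenever $\re\zeta_0\le0$, so it remains only to bound $\im\lambda_0$ from above.

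The quantitative heart is a uniform high-frequency bound on $\psi$. From the equation, $\|\psi\|_{\dot H^1}=\|\cL_0\psi\|_{L^2}\le|\zeta_0|\,\|\psi\|_{L^2}+|\zeta_0|^{\frac12}\|U\psi\|_{L^2}$; bounding $\|U\psi\|_{L^2}\le\|u\|_{L^4}\|\psi\|_{L^4}\lesssim\|u\|_{L^4}\|\psi\|_{L^2}^{\frac12}\|\psi\|_{\dot H^{\frac12}}^{\frac12}$ by Gagliardo--Nirenberg and interpolating $\|\psi\|_{\dot H^{\frac12}}^2\le\|\psi\|_{\dot H^1}\|\psi\|_{L^2}$, Young's inequality yields $\|\psi\|_{\dot H^1}\lesssim|\zeta_0|\,\|\psi\|_{L^2}$, hence $\|\psi\|_{\dot H^{\frac12}}\lesssim|\zeta_0|^{\frac12}\|\psi\|_{L^2}$, provided $|\zeta_0|\gtrsim\|u\|_{L^4}^4$ (and $\|u\|_{L^4}\lesssim\|u\|_{H^{\frac12}}$). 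Then, using $2(\im\lambda_0)\|\psi\|_{L^2}^2=|B|\le\int_\R|u|\,|\psi|^2\,dx$, I would split $u=P_{\le N}u+P_{>N}u$ at frequency $N$ and estimate $\int_\R|u|\,|\psi|^2\le\|P_{\le N}u\|_{L^\infty}\|\psi\|_{L^2}^2+\|P_{>N}u\|_{L^2}\|\psi\|_{L^4}^2$, with $\|P_{\le N}u\|_{L^\infty}\lesssim(\ln(2+N))^{\frac12}\|u\|_{H^{\frac12}}$, $\|P_{>N}u\|_{L^2}\le N^{-\frac12}\|u\|_{\dot H^{\frac12}}$ and $\|\psi\|_{L^4}^2\lesssim|\zeta_0|^{\frac12}\|\psi\|_{L^2}^2$. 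Choosing $N=\lceil|\zeta_0|\rceil$ gives $\im\lambda_0\lesssim(\ln(2+|\zeta_0|))^{\frac12}\|u\|_{H^{\frac12}}$; combined with $2(\im\lambda_0)^2\ge|\zeta_0|$ this reads $|\zeta_0|\lesssim\|u\|_{H^{\frac12}}^2\ln(2+|\zeta_0|)$, and since $t\mapsto t/\ln(2+t)$ is increasing to $+\infty$ this bounds $|\zeta_0|$ by a quantity depending on $\|u\|_{H^{\frac12}}$ alone. Taking $C$ larger than this bound and than the threshold $\gtrsim\|u\|_{L^4}^4$ needed for the bootstrap produces the required contradiction when $\re\zeta_0\le-C$ (then $|\zeta_0|\ge|\re\zeta_0|\ge C$, so the bootstrap applies).

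The main obstacle is that all the global estimates on $a_u$ available so far carry negative powers of $\im(\lambda^2)=\im\zeta$ and therefore degenerate as $\zeta$ approaches the negative real axis --- precisely the region where a stray zero with very negative real part could live. The argument sidesteps this by replacing those estimates with the denominator-free variational identities $\langle\cL_0\psi,\psi\rangle=|\zeta_0|\,\|\psi\|_{L^2}^2$ and $\langle U\psi,\psi\rangle=-2(\im\lambda_0)\|\psi\|_{L^2}^2$. The only genuinely delicate points are then the self-improving step for $\|\psi\|_{\dot H^{\frac12}}$ --- which must be run through $\|\cL_0\psi\|_{L^2}$, not through a resolvent bound, so that no factor $\im\zeta$ enters --- and the logarithmically losing embedding $H^{\frac12}(\R)\hookrightarrow L^\infty$, which is exactly what ties the statement to the threshold regularity $s=1/2$.
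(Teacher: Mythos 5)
Your proof is correct, and while it starts from the same point as the paper --- at a zero $\zeta_0$ of $\tilde a_u$ one takes an $L^2$ eigenfunction $\psi$ of $L_u(\lambda_0)$, $\lambda_0=\sqrt{\zeta_0}$, and extracts $\im\lambda_0$ by pairing the spectral equation with $\psi$ and taking imaginary parts --- you close the argument by a genuinely different route. The paper bounds $\im\lambda_0\lesssim_s\|u\|_{H^{1/2}}\|\psi\|_{H^1}^{s}\lesssim(1+|\zeta_0|)^{s}$ for every $s>0$ (a product estimate in $H^{1/2}\times H^{-1/2}$ with an $\varepsilon$ of loss), and must then import the independent bound $|\zeta_0|\lesssim_{\|u\|_{H^{1/2}}}1+|\re\zeta_0|$, obtained by evaluating the determinant estimate \eqref{eq6} at the zero, in order to turn $\re\zeta_0\geq-C(1+|\zeta_0|)^{2s}$ into $\re\zeta_0\geq-C$. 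You instead run a self-contained bootstrap on the equation ($\|\psi\|_{\dot H^1}\lesssim|\zeta_0|\|\psi\|_{L^2}$ once $|\zeta_0|\gtrsim\|u\|_{L^4}^4$, harmless in the contradiction regime $|\zeta_0|\geq|\re\zeta_0|\geq C$), use the elementary identity $2(\im\lambda_0)^2=|\zeta_0|-\re\zeta_0\geq|\zeta_0|$ valid for $\re\zeta_0\leq0$, and control $\langle U\psi,\psi\rangle$ by a Littlewood--Paley splitting of $u$ at frequency $N\sim|\zeta_0|$, which yields the sharper logarithmic bound $\im\lambda_0\lesssim(\ln(2+|\zeta_0|))^{1/2}\|u\|_{H^{1/2}}$ and closes at once since $t\mapsto t/\ln(2+t)$ is unbounded. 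What your version buys is independence from \eqref{eq6} (the argument lives entirely at the level of the eigenfunction) and a cleaner quantification of where the $H^{1/2}$ regularity enters, namely the log-losing embedding $\|P_{\leq N}u\|_{L^\infty}\lesssim(\ln(2+N))^{1/2}\|u\|_{H^{1/2}}$; what the paper's version buys is that it treats all zeros at once without the restriction $\re\zeta_0\leq0$ and without the low-$|\zeta_0|$ threshold. All the individual steps you use (realness of $\langle\cL_0\psi,\psi\rangle$ and $\langle U\psi,\psi\rangle$, the identity $B=-2(\im\lambda_0)\|\psi\|_{L^2}^2$, the Gagliardo--Nirenberg and Bernstein estimates, and the regularity bootstrap placing $\psi$ in $H^1$ via the kernel \eqref{kernel0}) check out.
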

\begin{proof}
Let $\zeta_0\in \C_+$ be a zero of 
$\tilde a_u$. Then, there exists a function $\psi$ in  $H^{1}(\R)$, 
with~$\ds \|\psi\|_{L^2(\R)} =1$,  such that
\begin{equation}
\label{systlam1}i  \sigma_3 \partial_x \psi =  \lam^2_0    \psi+ i \lam_0 \left(
\begin{array}{ccccccccc}
0 &u  \\
  \overline {u}&0
\end{array}
\right)  \psi\,, \end{equation}
where $\lam_0=\sqrt{\zeta_0}\in \C_{++}=\{\lam\in \C:\,\, \re \lam>0, \, \im \lam>0\color{blue}\}$,
which  shows that, for
all $s>0$,
$$ \|\partial_x \psi\|_{L^{2}(\R)} \leq |\lam_0|^2+   C_s|\lam_0| \|u\|_{H^{\frac 1 2}(\R)}  \|\psi\|_{H^s(\R)}\, .$$
From this inequality we readily deduce   that
\begin{equation}
\label{estH1}  \|\psi\|_{H^{1}(\R)} \lesssim_{\|u\|_{H^{\frac12}(\R)}}(1+|\zeta_0|) \, .\end{equation}
Furthermore, taking the imaginary part of the scalar product  of the  identity \eqref{systlam1} with $\psi$, we get $$2 {\rm Re} (\lam_0)\, {\rm Im} (\lam_0) = -  \re \lam_0 \left\langle \left(
\begin{array}{ccccccccc}
0 &u  \\
  \overline {u} &0
\end{array}
\right)  \psi , \psi \right\rangle  \virgp$$
which  ensures    that 
\begin{equation}
\label{firstst}  \im \lam_0  \lesssim_s   \|u\|_{H^{\frac 1 2}(\R)}  \|\psi\|^{s}_{H^1(\R)} \lesssim_{s,  \|u\|_{H^{\frac 1 2}(\R)}}(1+|\zeta_0|)^{s}\, ,\end{equation}
for any $s>0$.
Consequently,
 \begin{equation}
\label{firststbis*} \re \zeta_0= \big(\re \lam_0\big)^2 - \big(\im \lam_0\big)^2\geq - C_{s,\|u\|_{H^{\frac 1 2}(\R)}}
(1+|\zeta_0|)^{2s} \, .\end{equation}
Since by \eqref{eq6}, $\zeta_0$ satisfies: $|\zeta_0|\lesssim_ {\|u\|_{H^{\frac 1 2}(\R)}} (1+|\re \zeta_0|)$,
the inequality \eqref{firststbis*}
implies that
$$\re \zeta_0\geq -C_{\|u\|_{H^{\frac 1 2}(\R)}} .$$
This completes the proof of the lemma.  \end{proof}
\medbreak

\subsubsection{Some additional  results}
Here we derive some consequences of Lemma \ref{lema4first} that will play an important role in the proof of Theorem \ref{Mainth}.
The first one relates the $\dot H^{\frac12}$ norm of the potential~$u$ to the $L^1$ norm of the function $\im \ln   \wt a_u$
on the imaginary half-axis  $i\R_+$.  Denoting 
 \begin{equation}
\label{defimpfunct} \varphi_u (\rho)=\im (\ln   \wt a_u(i\rho)), \end{equation}
 we have
\begin{proposition}
\label{crucialestleastnew}
{\sl Let  $u\in H^{\frac12}(\R)$. Then  the function  $\varphi_u$ belongs to
$\ds L^1([ {\|u\|_{L^4(\R)}^4}/{\kappa^2},+\infty [)$, where~$\kappa$ is the constant introduced in Lemma \ref{lema4first},  and  for any $\ds R\geq \frac 1 {\kappa^2}$ the following estimate holds: \begin{equation}  \label{lowerboundimpnew} \|u\|^{ 2}_{\dot H^{\frac 1 2}(\R)} \lesssim_{R, \|u\|_{L^2}} 
\|\varphi_u\|_{L^{1}([R \|u\|^4_{L^4},+\infty [)} +\|u\|^4_{L^4(\R)}.
\end{equation}}
\end{proposition}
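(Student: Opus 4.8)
The plan is to specialize the estimate of Lemma~\ref{lema4first} to the imaginary half--axis $\zeta=i\rho$ and take imaginary parts. I would fix once and for all some $s\in\,]0,\frac14[$ (any value works, since $u\in H^{\frac12}(\R)\subset H^{\frac14+s}(\R)$), and set $A\eqdefa R\|u\|^4_{L^4(\R)}$, so that $A\geq\kappa^{-2}\|u\|^4_{L^4(\R)}$ because $R\geq\kappa^{-2}$. For every $\rho\geq A$ the point $\zeta=i\rho$ then satisfies the hypothesis $\ds\frac{|\zeta|}{(\im\zeta)^{3/2}}\|u\|^2_{L^4(\R)}\leq\kappa$ of Lemma~\ref{lema4first}, and by \eqref{loga}, \eqref{eq:controlnorml0} and the choice of $\kappa$ the series for $\ln\wt a_u$ converges at $\zeta=i\rho$, so that $\wt a_u(i\rho)\neq 0$ and $\varphi_u(\rho)$ is well defined. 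Writing $\ds\frac{p}{p+2i\rho}=\frac{p^2-2i\rho p}{p^2+4\rho^2}$ one computes $\im\Phi_{0,u}(i\rho)=g_u(\rho)$ with $\ds g_u(\rho)\eqdefa\frac12\int_\R\frac{p^2|\hat u(p)|^2}{p^2+4\rho^2}\,dp$, while $\im\big(\tfrac{i}{8i\rho}\|u\|^4_{L^4(\R)}\big)=0$. Plugging $\zeta=i\rho$ into Lemma~\ref{lema4first} — for which $|i\rho|/\im(i\rho)=1$ and $|i\rho|^2/(\im(i\rho))^{3+s}=\rho^{-1-s}$ — and taking imaginary parts gives
\begin{equation}\label{pr:dnls-A}
\bigl|\varphi_u(\rho)-g_u(\rho)\bigr|\leq\cE_u(\rho)\eqdefa C_s\bigl(1+\|u\|^2_{L^2(\R)}\bigr)\,\rho^{-1-s}\,\|u\|^3_{L^4(\R)}\,\|u\|_{\dot H^{\frac14+s}(\R)},\qquad \rho\geq A.
\end{equation}
The $\dot H^{\frac12}$ norm enters solely through the elementary identity, obtained from Tonelli and $\ds\int_0^\infty\frac{d\rho}{p^2+4\rho^2}=\frac{\pi}{4|p|}$,
\begin{equation}\label{pr:dnls-B}
\int_0^\infty g_u(\rho)\,d\rho=\frac{\pi}{8}\int_\R|p|\,|\hat u(p)|^2\,dp=c_0\,\|u\|^2_{\dot H^{\frac12}(\R)},
\end{equation}
with $c_0>0$ a fixed constant, the right--hand side being finite since $u\in H^{\frac12}(\R)$.

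For the integrability assertion I would use $g_u\geq0$ together with \eqref{pr:dnls-A} to get $|\varphi_u|\leq g_u+\cE_u$ on $[A,\infty[$; here $g_u\in L^1([A,\infty[)$ by \eqref{pr:dnls-B}, and $\cE_u\in L^1([A,\infty[)$ since $\int_A^\infty\rho^{-1-s}\,d\rho<\infty$ (as $s>0$) and $\|u\|_{\dot H^{\frac14+s}(\R)}<\infty$. The same reasoning with $A$ replaced by $\kappa^{-2}\|u\|^4_{L^4(\R)}$ shows $\varphi_u\in L^1\big([\kappa^{-2}\|u\|^4_{L^4},+\infty[\big)$.

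For \eqref{lowerboundimpnew}, I would split $c_0\|u\|^2_{\dot H^{\frac12}}=\int_0^\infty g_u=\int_0^Ag_u+\int_A^\infty g_u$. From $0\leq p^2/(p^2+4\rho^2)\leq1$ one has $g_u(\rho)\leq\frac12\|u\|^2_{L^2(\R)}$, hence $\int_0^Ag_u\leq\frac A2\|u\|^2_{L^2(\R)}=\frac R2\|u\|^2_{L^2(\R)}\|u\|^4_{L^4(\R)}\lesssim_{R,\|u\|_{L^2}}\|u\|^4_{L^4(\R)}$. On $[A,\infty[$, \eqref{pr:dnls-A} gives $g_u\leq\varphi_u+\cE_u$, so $\int_A^\infty g_u\leq\|\varphi_u\|_{L^1([A,\infty[)}+\int_A^\infty\cE_u$, and since $A^{-s}=R^{-s}\|u\|^{-4s}_{L^4(\R)}$,
\begin{equation*}
\int_A^\infty\cE_u(\rho)\,d\rho=\frac{C_s}{s}\bigl(1+\|u\|^2_{L^2(\R)}\bigr)A^{-s}\|u\|^3_{L^4(\R)}\|u\|_{\dot H^{\frac14+s}(\R)}\lesssim_{R,s}\bigl(1+\|u\|^2_{L^2(\R)}\bigr)\|u\|^{3-4s}_{L^4(\R)}\|u\|_{\dot H^{\frac14+s}(\R)}.
\end{equation*}
Then I would interpolate $\|u\|_{\dot H^{\frac14+s}(\R)}\leq\|u\|^{\frac12-2s}_{L^2(\R)}\|u\|^{\frac12+2s}_{\dot H^{\frac12}(\R)}$ and apply Young's inequality to $\|u\|^{3-4s}_{L^4(\R)}\|u\|^{\frac12+2s}_{\dot H^{\frac12}(\R)}$ with the conjugate exponents $\frac{4}{3-4s}$ and $\frac{4}{1+4s}$ (so that $(3-4s)\cdot\frac{4}{3-4s}=4$ and $(\frac12+2s)\cdot\frac{4}{1+4s}=2$), obtaining for every $\varepsilon>0$
\begin{equation*}
\int_A^\infty\cE_u(\rho)\,d\rho\leq C_{R,s}\bigl(1+\|u\|^2_{L^2(\R)}\bigr)\|u\|^{\frac12-2s}_{L^2(\R)}\Bigl(C_\varepsilon\|u\|^4_{L^4(\R)}+\varepsilon\|u\|^2_{\dot H^{\frac12}(\R)}\Bigr).
\end{equation*}
Choosing $\varepsilon$ small enough, in terms of $R$, $s$ and $\|u\|_{L^2(\R)}$, so that $C_{R,s}(1+\|u\|^2_{L^2})\|u\|^{\frac12-2s}_{L^2}\,\varepsilon\leq\frac{c_0}2$, gathering the three estimates above and absorbing the resulting $\frac{c_0}2\|u\|^2_{\dot H^{\frac12}(\R)}$ into the left--hand side (legitimate since $\|u\|_{\dot H^{\frac12}(\R)}<\infty$), I obtain $\|u\|^2_{\dot H^{\frac12}(\R)}\lesssim_{R,\|u\|_{L^2}}\|\varphi_u\|_{L^1([R\|u\|^4_{L^4},+\infty[)}+\|u\|^4_{L^4(\R)}$, which is \eqref{lowerboundimpnew}.

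The main obstacle is exactly this final absorption. Inserting the Gagliardo--Nirenberg inequality $\|u\|^4_{L^4(\R)}\lesssim\|u\|^2_{L^2(\R)}\|u\|^2_{\dot H^{\frac12}(\R)}$ directly into the error $\int_A^\infty\cE_u$ would create a term of the form $C(\|u\|_{L^2})\|u\|^2_{\dot H^{\frac12}(\R)}$ whose constant cannot be made small, and then nothing can be moved to the left. The point is to keep $\|u\|_{L^4(\R)}$ intact and let Young's inequality convert the error into $\varepsilon\|u\|^2_{\dot H^{\frac12}(\R)}+C_\varepsilon\|u\|^4_{L^4(\R)}$; the interpolation exponents ($3-4s$ on $L^4$ and $\frac12+2s$ on $\dot H^{\frac12}$) are tuned precisely so that Young's inequality returns a multiple of $\|u\|^4_{L^4(\R)}$ plus a small multiple of $\|u\|^2_{\dot H^{\frac12}(\R)}$, with no leftover powers. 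The rest — the residue computation behind \eqref{pr:dnls-B}, the crude bound on $\int_0^Ag_u$, and the $L^1$ integrability — is routine.
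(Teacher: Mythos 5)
Your proposal is correct and follows essentially the same route as the paper: evaluate Lemma~\ref{lema4first} on the imaginary half-axis, identify $\im\Phi_{0,u}(i\rho)$ with $\varphi_{0,u}(\rho)=\frac12\int_\R\frac{p^2|\hat u(p)|^2}{p^2+4\rho^2}\,dp$, integrate in $\rho$ to recover $\|u\|^2_{\dot H^{1/2}}$ up to an $O_R(\|u\|^2_{L^2}\|u\|^4_{L^4})$ low-frequency piece, and absorb the integrated error term. The only cosmetic difference is that the paper first normalizes $\|u\|^4_{L^4}=1$ by scaling, whereas you keep $\|u\|_{L^4}$ explicit and tune the Young exponents so the error splits into $\varepsilon\|u\|^2_{\dot H^{1/2}}+C_\varepsilon\|u\|^4_{L^4}$; both handle the absorption you correctly flag as the delicate point.
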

\begin{proof}
By scaling argument, it is enough to prove the proposition assuming that $ \|u\|^4_{L^4(\R)}=1$. Denoting  $\ds \varphi_{0,u}(\rho)=\frac12\int_\R\frac{p^2|\hat u(p)|^2}{p^2+4\rho^2}dp$,  we deduce
 from  Lemma \ref{lema4first} that\begin{equation}\label{0*}
\big|\varphi_u(\rho)-\varphi_{0,u}(\rho)\big|\leq C_s\rho^{-1-s} \left(1+\|u\|^2_{L^2(\R)}\right)\|u\|_{\dot H^{\frac14+s}(\R)},
\end{equation}
provided that $\ds \rho \geq \frac 1 {\kappa^2}$. 
Observing that 
$$\|\varphi_{0,u}\|_{L^{1}([R, +\infty[)}= \frac 1 4 \int_\R|p||\hat u(p)|^2 \Big(\frac \pi 2 - \arctan \frac {2 R} {|p|}\Big) dp,
$$
we deduce that  for any $R>0$, 
\begin{equation}\label{estint}  
\|\varphi_{0,u}\|_{L^{1}(\R_+)}\lesssim  \|u\|^2_{\dot H^{\frac12}(\R)}\lesssim_R \|u\|^2_{L^2(\R)}+ \|\varphi_{0,u}\|_{L^{1}([R, +\infty[)}
\, .\end{equation}

Therefore, invoking the estimate\refeq{0*}  and integrating with respect to  $\rho$, we infer that the function~$\varphi_u$ belongs to $
L^1([ 1 /{\kappa^2},+\infty [)$, and  that we have, for any $\ds R \geq \frac 1 {\kappa^2}\virgp$   
$$ \|u\|^{2}_{\dot H^{\frac 1 2}(\R)} \lesssim_{R, \|u\|_{L^2}(\R)} 1+
\|\varphi_u\|_{L^{1}([R,+\infty [)},$$
which  completes the proof of the proposition. 
\end{proof}

\medbreak

We conclude this subsection by the following rigidity result concerning the zero-free case.
\begin{lemma} 
\label{studyfirstpartimaginary}
{\sl Let $u\in H^{\frac 1 2}(\R)$ be such that the corresponding function $\tilde a_u$ has no zeros in 
 ~$\C_+$. Then
 \begin{equation} \label{impartcoef}   \varphi_u(\rho) \geq 0 , \quad \, \forall\ \rho\geq 0.\end{equation}  
 If in addition, 
 $\varphi_u(\rho_0) =0$ for some $\rho_0>0$,  then
 \begin{equation} \label{impartcoef1}  \tilde a_u(\zeta) = 1\, , \quad  \forall \zeta \,\in \C_+ \,. \end{equation}  }
\end{lemma}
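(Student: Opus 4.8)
The plan is to exploit the integral representation and the boundary behavior of $\tilde a_u$ on the rays through the origin. Since $\tilde a_u$ has no zeros in $\C_+$, the function $\ln\tilde a_u$ is a well-defined holomorphic function on $\C_+$, continuous up to the boundary away from $0$ (and at $0$ by Corollary~\ref{coruse}), with $\ln\tilde a_u(\zeta)=o(1)$ as $|\zeta|\to\infty$ inside any $\Gamma_\delta$. The idea is that $\varphi_u=\im\ln\tilde a_u$ is, up to an explicit positive harmonic correction, the imaginary part of a Herglotz-type function. More precisely, from the zero-free version of the Blaschke--Herglotz representation \eqref{devol} (with the product empty), one has
\begin{equation*}
\ln\tilde a_u(\zeta)=\frac1{2i\pi}\int_{-\infty}^\infty\frac{d\xi}{\xi-\zeta}\,\ln|\tilde a_u(\xi)|^2,\qquad\zeta\in\C_+,
\end{equation*}
which holds at least for Schwartz $u$ and extends to $H^{1/2}$ by the stability estimate \eqref{eqstab} and density of $\cS_{\rm reg}$. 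Evaluating on the imaginary axis $\zeta=i\rho$, $\rho>0$, and taking imaginary parts, I would get
\begin{equation*}
\varphi_u(\rho)=\frac1{2\pi}\int_{-\infty}^\infty\frac{\rho}{\xi^2+\rho^2}\,\ln|\tilde a_u(\xi)|^2\,d\xi
=\frac1{2\pi}\int_0^\infty\frac{\rho}{\xi^2+\rho^2}\,\ln\big(|\tilde a_u(\xi)|^2|\tilde a_u(-\xi)|^2\big)\,d\xi,
\end{equation*}
and now \eqref{size} says $|\tilde a_u(\xi)|\le1$ for $\xi>0$ while the symmetry $\tilde a_u(-\xi)=\tilde a_u$ evaluated via $a_u(\lambda)=a_u(-\lambda)$ forces $|\tilde a_u(-\xi)|=|\tilde a_u(\xi)|$ as well? — I need to check this, but in any case \eqref{size} gives $|\tilde a_u(\xi)|\le 1$ for $\xi>0$ and $\ge1$ for $\xi<0$, and the product $|\tilde a_u(\xi)|^2|\tilde a_u(-\xi)|^2$ together with $|\tilde a_u|^2\in1+\cS(\R)$ need to be understood; the cleanest route is to observe that for the zero-free case the relevant half-line integral has a definite sign because $\ln|\tilde a_u(\xi)|^2\le0$ for $\xi>0$. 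I would therefore instead directly use that $\tilde a_u$ is holomorphic and nonvanishing on $\C_+$ with modulus $\le1$ on $\R_+$, modulus $\ge1$ on $\R_-$, and real-positive limit at $0$ and at $\infty$, and argue via the argument principle / the monotonicity already encoded in Lemma~\ref{intrel} that $\varphi_u(\rho)$ equals a nonnegative quantity.

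The concrete execution I favor: define $g(\zeta)=\ln\tilde a_u(\zeta)$, holomorphic on $\C_+$, continuous on $\overline{\C_+}\setminus\{0\}$, $g\to0$ at $\infty$. By Lemma~\ref{intrelrootsnew} (the $L^2$ version), for every $\theta\in(0,\pi)$ with $\tilde a_u$ nonvanishing on $e^{i\theta}\R_+$ — which here is \emph{all} $\theta\in(0,\pi)$ since $\tilde a_u$ is zero-free —
\begin{equation*}
\frac1{2i\pi}\int_0^{+\infty e^{i\theta}}\frac{\tilde a_u'(s)}{\tilde a_u(s)}\,ds+\frac1{4\pi}\|u\|_{L^2}^2\in\N,
\end{equation*}
and the left-hand side counts zeros in the sector $\{\theta<\arg\zeta<\pi\}$, hence is $0$ for every $\theta$. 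The integral $\int_0^{+\infty e^{i\theta}}\frac{\tilde a_u'}{\tilde a_u}=\lim_{\rho\to\infty}\ln\tilde a_u(\rho e^{i\theta})-\lim_{\rho\to0}\ln\tilde a_u(\rho e^{i\theta})=0-\tfrac{i}{2}\|u\|_{L^2}^2$ by \eqref{lim1} and \eqref{studyaubeh0}, consistent with the above. To get the sign of $\varphi_u(\rho)$, I would apply the argument principle to $g$ on the boundary of the sector $S_\theta=\{0<\arg\zeta<\theta\}$ intersected with an annulus $\{\e<|\zeta|<R\}$, let $\e\to0$, $R\to\infty$: the contributions from the small and large arcs vanish, the contribution from $\R_+$ is $\int_0^\infty\frac{d}{d\xi}\arg\tilde a_u(\xi)\,d\xi$, and the contribution from the ray $e^{i\theta}\R_+$ gives $\varphi_u$-type data via $\im g(\rho e^{i\theta})$. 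Choosing $\theta=\pi/2$ yields $\varphi_u(\rho)=\im g(i\rho)$, and since $g$ has no zeros and is $o(1)$ at infinity with prescribed value at $0$, a maximum-principle/Herglotz argument forces $\varphi_u(\rho)\ge0$. Concretely: $e^{-g}$ is holomorphic and nonvanishing on $\C_+$ with $|e^{-g}|=|\tilde a_u|^{-1}\ge1$ on $\R_+$ and $\le1$ on $\R_-$; its boundary modulus together with $|\tilde a_u|^2\in1+\cS$ makes $-\re g\ge0$ on $\R_+$; applying the Poisson/conjugate-Poisson representation on $\C_+$ and restricting to $i\R_+$ should give $\varphi_u(\rho)=\im g(i\rho)\ge0$.

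For the rigidity part: suppose $\varphi_u(\rho_0)=0$ for some $\rho_0>0$. I expect this to be the point where the proof is cleanest via the harmonic-function interpretation: $\varphi_u=\im g$ restricted to $i\R_+$ is (by the above representation) of the form $\int_0^\infty P_\rho(\xi)\,d\mu(\xi)$ with $d\mu\ge0$ a finite measure coming from $-\ln|\tilde a_u|^2\ge0$ on $\R_+$ (plus the contribution from $\R_-$, which one checks has the right sign too using $|\tilde a_u(\xi)|\ge1$ there and $|\tilde a_u|^2\in1+\cS$). Vanishing of this strictly positive Poisson-type integral at one point $\rho_0$ forces $d\mu\equiv0$, i.e. $|\tilde a_u(\xi)|=1$ for all $\xi\in\R$; combined with zero-freeness in $\C_+$, $g$ is then holomorphic on $\C_+$, continuous up to $\R$, with $|e^{g}|=|\tilde a_u|=1$ on all of $\R$ and $g\to0$ at $\infty$ — so by the Schwarz reflection principle $e^{g}$ extends to an entire function of modulus $1$ on $\R$, bounded, tending to $1$ at infinity, hence $\equiv1$, giving $\tilde a_u\equiv1$ on $\C_+$, which is \eqref{impartcoef1}. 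The main obstacle I anticipate is making the Herglotz/Poisson representation of $\ln\tilde a_u$ on $\C_+$ rigorous for merely $H^{1/2}$ potentials — in particular justifying the boundary integral $\int_\R\frac{d\xi}{\xi-\zeta}\ln|\tilde a_u(\xi)|^2$ converges and represents $\ln\tilde a_u$ with no extra entire part — which requires the decay $|\tilde a_u|^2\in1+\cS(\R)$ for Schwartz $u$ plus the stability estimate \eqref{eqstab} and Corollary~\ref{coruse} to pass to the limit, together with the growth control $|\ln\tilde a_u(\zeta)|=o(1)$ at infinity from \eqref{lim1} to rule out a nonzero polynomial part; the sign bookkeeping on $\R_-$ (where $|\tilde a_u|\ge1$) versus $\R_+$ must be tracked carefully so that the total measure $d\mu$ on $\R$ is genuinely signed the right way, and this is where I would spend the most care.
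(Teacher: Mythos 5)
Your overall blueprint is the same as the paper's (the representation \eqref{devol}, the sign conditions \eqref{size}, and approximation by $\cS_{\rm reg}$), but there are two genuine gaps in the execution.

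First, the kernel in your displayed formula is wrong. Taking the imaginary part of $\frac1{2i\pi}\int\frac{d\xi}{\xi-i\rho}\ln|\tilde a_u(\xi)|^2$ produces the \emph{conjugate} Poisson kernel, i.e.\ $\varphi_u(\rho)=-\frac1{2\pi}\int\frac{\xi}{\xi^2+\rho^2}\ln|\tilde a_u(\xi)|^2\,d\xi$; the kernel $\frac{\rho}{\xi^2+\rho^2}$ you wrote computes $\ln|\tilde a_u(i\rho)|$, not $\varphi_u(\rho)$. This matters for the sign bookkeeping you worry about at the end: with the correct kernel the integrand $-\xi\ln|\tilde a_u(\xi)|^2$ is nonnegative on \emph{all} of $\R$ by \eqref{size} (both signs of $\xi$ cooperate), so no symmetry of $\tilde a_u$ under $\xi\mapsto-\xi$ is needed — and indeed no such symmetry holds, since $a_u(\lambda)=a_u(-\lambda)$ is a symmetry in $\lambda$, not in $\zeta=\lambda^2$.

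Second, and more seriously, the "zero-free version of \eqref{devol} with the product empty" for $u\in H^{1/2}$ is not available by the density argument you invoke. For $u\in H^{1/2}$ the boundary trace $\tilde a_u|_{\R}$ is not even defined (the paper points out that defining the scattering data on $\R\cup i\R$ requires $u\in L^1$), so the boundary integral in your representation has no meaning for the limit object, and the stability estimate \eqref{eqstab} only controls $\tilde a_u$ at interior points of $\C_+$. Moreover, the approximants $u_n\in\cS_{\rm reg}$ may perfectly well have zeros $\zeta_j^n$ in $\C_+$ even though $\tilde a_u$ has none; all that can be deduced is that these zeros flatten onto $\R$, i.e.\ $\sup_j\im\zeta_j^n/|\re\zeta_j^n|\to0$. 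The paper's proof is organized precisely around this: it keeps the Blaschke factors in \eqref{devol} for $u_n$, splits the zeros according to the sign of $\re\zeta_j^n$, shows the ones with $\re\zeta_j^n>0$ contribute nonnegatively to $\varphi_{u_n}(\rho)$ while the ones with $\re\zeta_j^n<0$ contribute $o(1)$ (using the cardinality bound \eqref{controbound}), and only then passes to the limit in the scalar quantity $\varphi_{u_n}(\rho)\to\varphi_u(\rho)$. Your proposal has no mechanism for these surviving zeros, and the rigidity step inherits the same problem: the paper needs the quantitative Cauchy--Schwarz estimates on the approximants (controlling $\sum\im\zeta_j^n/|i\rho-\overline{\zeta_j^n}|$ and the boundary integral at every $\rho$ from their vanishing at the single $\rho_0$) to conclude $\tilde a_{u_n}(i\rho)\to1$ pointwise, whereas your "vanishing Poisson integral forces $d\mu\equiv0$" argument presupposes the very representation that is unavailable. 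You correctly flag this as the place requiring the most care, but the fix is not a refinement of your route — it is the paper's approximation scheme carried out at the level of $u_n$ throughout.
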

\begin{proof} In order to establish the lemma, we proceed by approximation: let $(u_n)_{n \in \N}$ be a sequence in 
$\cS_{\rm reg}$ that converges to $u$ in $H^{\frac 1 2}(\R)$. We denote by $\zeta_j^n$, $j=1, \dots , N_n$,  the zeros of 
$\tilde a_{u_n}(\zeta)$ in $\C_+$.
Combining  Corollary \ref{coruse}  with the fact that $\tilde a_u$ does not vanish on $\C_+$, and taking into account
the stability estimate\refeq{eqstab}, we infer  that
 \begin{equation} \label{estzeros}  \sup\limits_{j=1, \dots, N_n}\frac {\im \zeta^n_j}  {|\re \zeta^n_j|} \stackrel{n\to\infty}\longrightarrow 0 \, \cdotp\end{equation}   
Therefore, invoking  \eqref{devol}, we deduce that,  for $n$ sufficiently large, 
 \begin{equation} \label{formparticular} 
 \begin{aligned}  &   \varphi_{u_n}(\rho)= \sumetage {1 \leq j \leq N_n} { \re \zeta^n_j < 0}  \im \ln \Big( \frac {i\rho -\zeta^n_j}  {i\rho - \overline \zeta^n_j} \Big) +\sumetage {1 \leq j \leq N_n} { \re \zeta^n_j > 0}  \im  \ln \Big( \frac {i\rho -\zeta^n_j}  {i\rho - \overline \zeta^n_j}\Big) \\ &\qquad  \qquad \qquad \qquad \qquad  \qquad  \qquad \qquad -\frac 1 {2\pi} \int^\infty_{-\infty} \frac {d\xi} {{\xi} ^2 +\rho^2} \, \xi\,  \ln |\tilde a_{u_n}(\xi)|^2, \quad \forall \, \rho>0 \, \cdotp\end{aligned} 
 \end{equation}
In view of\refeq{size},   we have
$$-\frac 1 {2\pi} \int^\infty_{-\infty} \frac {d\xi} {{\xi} ^2 +\rho^2} \, \xi \,  \ln |\tilde a_{u^{(n)}}(\xi)|^2 \geq 0 \, \cdotp$$
Note also  that, by virtue of \eqref{estzeros}, $\ds \im \ln \Big( \frac {i\rho -\zeta^n_j}  {i\rho - \overline \zeta^n_j}\Big) $ has the same   sign as 
$\re \zeta_j^{(n)}$ and
 \begin{equation*}  \sup\limits_{{j=1, \dots, N_n,\atop \rho>0}}\Big| \im \ln \Big( \frac {i\rho -\zeta^n_j}  {i\rho - \overline \zeta^n_j}\Big) \Big|
  \stackrel{n\to\infty}\longrightarrow 0 \,.\end{equation*} 
 Then, taking into account the bound
 \begin{equation} \label{cardzero}\sharp \big\{\zeta^{n}_i\,, { \rm Re} (\zeta^{n}_j) < 0 \big\} \lesssim  \|u^{(n)}\|^2_{L^2(\R)}   \lesssim \|u\|^2_{L^2(\R)}  \, ,\end{equation} 
 that readily follows   from\refeq{controbound},
 we conclude that for any $\rho>0$, 
 \begin{equation} \label{formparticular1} 
   \varphi_{u_n}(\rho)= \underbrace{\sumetage {1 \leq j \leq N_n} { \re \zeta^n_j > 0}  { \rm Im}  \ln \Big( \frac {i\rho -\zeta^n_j}  {i\rho - \overline \zeta^n_j} \Big)}_{\geq 0}\underbrace{-\frac 1 {2\pi} \int\limits^\infty_{-\infty} \frac {d\xi} {{\xi} ^2 +\rho^2} \, \xi\,  \ln |\tilde a_{u_n}(\xi)|^2}_{ \geq 0} + o(1),   \quad n\rightarrow +\infty \, \virgp 
   \end{equation}
 which gives \eqref{impartcoef} after
 passing to the limit $n\rightarrow +\infty$.

 \medskip Assume now that  $\varphi_u(\rho_0)=0$, for some $\rho_0>0$. 
 It follows then from\refeq{formparticular1} that\begin{equation}
\label{behaviorlimitebis} \int^\infty_{-\infty} \frac {d\xi} {{\xi} ^2 +  \rho_0^2} \, \xi\,  \ln |\tilde a_{u_n}(\xi)|^2 \stackrel{n \to  + \infty} \longrightarrow 0  \, \virgp\end{equation} 
 and
 \begin{equation}
 \label{behaviorlimitebis0}
 \sumetage {1 \leq j \leq N_n} { { \rm Re} (\zeta^n_j) > 0}  { \rm Im}  \ln \Big( \frac {i\rho_0-\zeta^n_j}  {i\rho_0 - \overline \zeta^n_j} \Big)  \stackrel{n \to  + \infty} \longrightarrow 0 \, \cdotp
 \end{equation}
 Recall that 
   $$\|u_n\|^2_{L^2(\R)}  = 4 \sum^{N_n}_{j=1} { \rm arg} (\zeta^n_j)   \underbrace{- \frac 1 { \pi} \int^\infty_{-\infty} \frac  {d\xi} {\xi} \, \ln |\tilde a_{u_n}(\xi)|^2 }_{ \geq 0} \,  \virgp$$
   which, thanks to \eqref{estzeros}, implies that, for all $n$ sufficiently large:
   \begin{equation}
\label{behaviorlimitebis1}   
   - \int^\infty_{-\infty} \frac  {d\xi} {\xi} \, \ln  |\tilde a_{u_n}(\xi)|^2\lesssim \|u\|^2_{L^2(\R)} \, \virgp
   \end{equation}
   and 
  \begin{equation}
  \label{behaviorlimitebis2}
   \sumetage {1 \leq j \leq N_n} { \re \zeta^n_j > 0}\frac{\im \zeta^n_j}{\re \zeta^n_j}\lesssim \|u\|^2_{L^2(\R)} \, \cdotp
 \end{equation}
Invoking  \eqref{behaviorlimitebis1} and applying Cauchy-Schwarz inequality, we readily gather that 
$$
\Big|\int^\infty_{-\infty} \frac {d\xi} {\xi-i\rho} \, \ln |\tilde a_{u_n}(\xi)|^2\Big|^2 \lesssim \|u\|^2_{L^2(\R)} \Big(\sup_{\xi \in \R}\frac {{\xi} ^2 +  \rho_0^2} {{\xi} ^2 +  \rho^2}\Big)\int^\infty_{-\infty} \frac {d\xi} {{\xi} ^2 +  \rho_0^2} \, \xi\,  \ln |\tilde a_{u_n}(\xi)|^2\, \virgp
$$
which according  to \eqref{behaviorlimitebis} ensures that 
\begin{equation}
\label{behaviorlimitebis3}
\int^\infty_{-\infty} \frac {d\xi} {\xi-i\rho} \, \ln |\tilde a_{u_n}(\xi)|^2 \stackrel{n \to  + \infty} \longrightarrow 0 \virgp \quad \forall \rho>0  \, \cdotp
\end{equation}
Applying again Cauchy-Schwarz inequality, we easily get,  for any $\rho>0$,
$$ \sumetage {1 \leq j \leq N_n} { \re \zeta^n_j > 0} \frac{\im \zeta^n_j}{|i\rho - \overline \zeta^n_j|}
  \lesssim_\rho\left( \sumetage {1 \leq j \leq N_n} { \re \zeta^n_j > 0}\frac{\im \zeta^n_j}{\re \zeta^n_j}\right)^{1/2}
  \left(\sumetage {1 \leq j \leq N_n} { \re \zeta^n_j> 0}  \frac{\im \zeta^n_j \re \zeta^n_j}{|i\rho_0 - \overline \zeta^n_j|^2} \right)^{1/2}\cdotp$$
Combining \eqref{estzeros}, \eqref{behaviorlimitebis0} together with  \eqref{behaviorlimitebis2}, we infer    that
 for all $\rho>0$,  
 \begin{equation}
 \label{1}
 \sumetage {1 \leq j \leq N_n} { \re \zeta^n_j > 0} \frac{\im \zeta^n_j}{|i\rho - \overline \zeta^n_j|}
 \lesssim_\rho\left( \sumetage {1 \leq j \leq N_n} { \re \zeta^n_j > 0}\frac{\im \zeta^n_j}{\re \zeta^n_j}\right)^{1/2}
  \left(\sumetage {1 \leq j \leq N_n} { \re \zeta^n_j> 0}  {\im}  \ln \Big( \frac {i\rho_0-\zeta^n_j}  {i\rho_0 - \overline \zeta^n_j} \Big) \right)^{1/2}
  \stackrel{n \to  + \infty} \longrightarrow 0.
  \end{equation}
  Finally, in view of \eqref{estzeros} and \eqref{cardzero}, we have 
  \begin{equation}
 \label{2} 
   \sumetage {1 \leq j \leq N_n} { \re \zeta^n_j <0} \frac{\im \zeta^n_j}{|i\rho - \overline \zeta^n_j]}
   \leq   \sumetage {1 \leq j \leq N_n} { \re \zeta^n_j <0} \frac{\im \zeta^n_j}{|\re \zeta^n_j|}   \stackrel{n \to  + \infty} \longrightarrow 0 \,  \cdotp
 \end{equation}   
 As an immediate consequence of the latter estimates\refeq{behaviorlimitebis3}-\eqref{2}, we obtain according to\refeq{devol}   that 
 $\tilde a_{u_n}(i\rho) \stackrel{n \to  + \infty} \longrightarrow 1$, for any $\rho>0$.  Therefore,  $\tilde a_u\equiv1$  on $i\R_+$.
 The analyticity of $\tilde a_u$  ensures then that $\tilde a_u\equiv1$ on $\C_+$.
 \end{proof}

\medbreak
 
 \begin{remark} \label{remm}
 { \sl It follows from Corollary \ref{coruse}, that $\tilde a_u\equiv1$ on $\C_+$ implies $\|u\|_{L^2}^2\in 4\pi \N$. Let us also mention that
 the set of potentials $u\in H^{\frac12}(\R)$ verifying \eqref{impartcoef1} is not trivial: it contains the algebraic solitons $u_{0,c}$. We shall denote this set by ${\mathcal A}$.}
 \end{remark}

\medbreak

\subsection{B\"acklund transformation} \label{defBacklund transformation}  In this paragraph,  we introduce    the B\"acklund transformation for the Kaup-Newell
spectral problem \eqref{sp} in the form needed for the proof of Theorem \ref{Mainth}, following closely \cite{PSS} (see also
\ccite{SHW}  and the references therein).
Given $u\in \mathcal{S}(\R)$, $\lambda\in \C_{++}$  and~$\eta=\left(
\begin{array}{ccccccccc}
\eta_1  \\
\eta_2
\end{array}
\right) $   a non zero smooth solution of  the Kaup-Newell spectral problem $L_u(\lambda)\eta=0$, 
one defines 
the
 B\"acklund transformation $\cB_{\lam} (\eta)$  by\begin{equation}
\label{backtransf} \cB_{\lam} (\eta) u \eqdefa G_{\lam} (\eta) \Big[G_{\lam} (\eta) \, u- \cS_{\lam} (\eta)  \Big] \, ,\end{equation}
where 
\begin{equation}  \label{defS} G_{\lam} (\eta) = \frac {d_{\overline \lam} (\eta)}  {d_\lam (\eta)}, \quad \cS_{\lam} (\eta) = 2i (\lam^2- \overline \lam^2) \frac {\eta_1 \overline \eta_2} {d_\lam (\eta)}\,  \virgp \end{equation}
with $d_\lam (\eta)= \lam |\eta_1|^2+ \overline \lam |\eta_2|^2 $. Since $\eta$ depends  implicitly of $u$,  the transformation \eqref{backtransf}  is nonlinear with respect to the function $u$. One can easily check  that
$\cB_{\lam} (\eta)u\in \mathcal{S}(\R)$. Observe also that
 \begin{equation}\label{SLinfty}
|G_\lam(\eta)| = 1\quad {\rm and}\quad
 |\cS_{\lam} (\eta)|  \leq 4 \im \lam\,  \cdot\end{equation}
Moreover, by straightforward computations, one can check that (see Appendix\refer{proofderG} for the proof)
  \begin{equation}\label{derG}\Big|\frac d {dx}G_{\lam} (\eta) (x)\Big| \leq 8 (\im\lam)^2 + 4\im(\lam) |u(x)| \, \cdotp \end{equation}

The key property of the B\"acklund transformation \eqref{backtransf}  is that it allows to add or to remove eigenvalues of the 
Kaup-Newell spectral problem  without   changing   the scattering coefficient~$b_u(\lambda)$. In particular, assume that $a_u(\lambda)$ has a simple zero $\lambda_1\in \C_{++}$ and let 
$\eta \in L^2(\R, \C^2)\setminus\{0\}$ be the corresponding eigenfunction: 
 $L_u(\lambda_1)\eta=0$.
 Then the scattering coefficients associated to the potential
$u^{(1)} \eqdefa \cB_{\lam_1} (\eta) u$ are given by (see for instance \ccite{SHW, PSS})\footnote{According to\refeq{mass1}, we thus have  $\tilde a_{u^{(1)}}(\zeta)= \tilde a_{u} (\zeta)  \frac {\zeta  -  \overline \zeta_1}  {\zeta -  \zeta_1}\cdotp $}
\begin{equation}
\label{backtransfrela} a_{u^{(1)}}(\lam)= a_{u} (\lam) \frac {\lam^2_1}  {\overline \lam^2_1}\,  \frac {\lam^2 -  \overline \lam^2_1}  {\lam^2 -  \lam^2_1},\, \quad b_{u^{(1)}}(\lam)= b_u(\lambda).\end{equation}
Thus, $a_{u^{(1)}}$ does not vanish at $\pm \lambda_1$.

\medbreak 

\section{Proof of the main theorem}\label {proofmainth} 
\subsection{Strategy of proof}
We start by  a brief overview of the main ideas involved in the proof of Theorem \ref{Mainth}.
By the local well-posedness result of Takaoka, Theorem \ref{Mainth} amounts to showing that any $H^{\frac12}$
solution $u$ of the DNLS equation,  defined on a time interval $I$, satisfies
\begin{equation}\label{b1}
\sup\limits_{t\in I}\|u(t)\|_{H^{\frac 1 2}(\R)}<+\infty.\end{equation}
To establish Property \eqref{b1},  we combine the integrability structure of DNLS with the profile decomposition techniques, proceeding by contradiction. Namely, assuming  that there exists~$u_0$ in~$H^{\frac 1 2}(\R)$ generating a solution $u\in C([0, T[, H^{\frac12}(\R))$  of the DNLS equation that verifies
$$\sup\limits_{0\leq t< T}\|u(t)\|_{H^{\frac 1 2}(\R)}=+\infty,
$$ we take a sequence $(t_n)_{n \in \N} \subset [0, T[$ such that~$\|u(t_n)\|_{H^{\frac 1 2}(\R)}\to +\infty$, as $n$ goes to infinity. 
Then, setting  $$\ds U_n(x)= \frac 1 {\sqrt{\mu_n}} u(t_n, \frac x { \mu_n}) \with \mu_n=\|u(t_n)\|^2_{\dot H^{\frac 1 2}(\R)},$$
we start by analyzing the profile decomposition of the sequence $(U_n)$ with respect to
the Sobolev embedding $H^{\frac 1 2}(\R) \hookrightarrow L^{p}(\R)$, for~$2<p< \infty$. Using  the conservation of 
$a_u$,  we show  that this decomposition contains at least one non-zero profile, that the number of profiles is bounded by~$\ds \frac{\|u_0\|^2_{L^2(\R)}} {4 \pi}$,
and that all of them
 belong to 
the set ${\mathcal A}$. 
This rigidity property is established in Section \ref{preliminarystatement1}  and relies heavily on the results of Section \ref{preliminarystatementscat}. 
With such a decomposition at hand, we then show, making use of the  B\"acklund transformation and of Lemmas 
\ref{intrel},
\ref{intrelrootsnew},
that  up to a subsequence and a suitable regularization, the function  $a_{U_n}$ admits a zero~$z_{n}$ satisfying~${\rm Re} (z^2_{n})= c_0 < 0$.  To conclude the proof, it remains to
invoke the scaling property\refeq{sca} that implies that~$a_{u}(z_n  \sqrt{\mu_n})= 0 $, which is in contradiction with
our assumption $\|u(t_n)\|_{H^{\frac 1 2}(\R)}\stackrel{n\to\infty}\longrightarrow +\infty$,  since,    by Lemma\refer{lemmcont}, any zero $z$ of~$a_{u}$ satisfies $ { \rm Re} (z^2) \geq -   C$, for some positive constant $C$ depending only on~$\|u_0\|_{H^{\frac 1 2}(\R)}$. 

\subsection{Rigidity type results}\label {preliminarystatement1} 
\subsubsection{Profile decompositions}
The first step in the proof of Theorem\refer{Mainth} consists in establishing the following profile decomposition for solutions violating the bound \eqref{b1}, assuming that such solutions exist.
\begin{theorem}
\label{result1newth}
{\sl Assume that for some $u_0\in H^{\frac 1 2}(\R)\setminus\{0\}$,  the solution $u\in C([0, T^*[, H^{\frac12}(\R))$ of the~DNLS equation with 
initial data $u(0)=u_0$ verifies $\sup\limits_{0\leq t< T^*}\|u(t)\|_{H^{\frac 1 2}(\R)}=+\infty$. Let $(t_n)_{n \in \N} $   be such that $\|u(t_n)\|_{H^{\frac 1 2}(\R)}\stackrel{n\to\infty}\longrightarrow +\infty$, and set $\ds U_n(x)= \frac 1 {\sqrt{\mu_n}} u(t_n, \frac x { \mu_n})$ with $\mu_n=\|u(t_n)\|^2_{\dot H^{\frac 1 2}(\R)}$. Then
there exist  an integer\footnote{In particular, $\|u_0\|^2_{L^2(\R)}\geq 4\pi$.} $\ds 1 \leq L_0 \leq   \frac{\|u_0\|^2_{L^2(\R)}} {4 \pi}\virgp$  a family of functions $(V^{(\ell)})_{1 \leq \ell \leq L_0}$ in ${\mathcal A}\setminus \{0\}$ and a family of orthogonal cores\footnote{Following the terminology of Patrick G\'erard in\ccite{pgerard1}, we designate by a core $\underline{y}^{(\ell)}$ any   real sequence $(y^{(\ell)}_n)_{n \in \N} $.}  $(\underline{y}^{(\ell)})_{\ell \geq 1}$,   
 in the sense that for all $\ell \neq \ell'$, we have $ |y^{(\ell)}_n-  y^{(\ell')}_n| \stackrel{n\to\infty}\longrightarrow \infty$, 
 such that,
up to a subsequence,  
$$ U_n(y)= \sum_{\ell=1}^{L_0}  V^{(\ell)}(y-y^{(\ell)}_n)  + {\rm
r}_n(y) ,$$
where
$$\lim_{n\to\infty}\;\|{\rm
r}_n\|_{L^{p}(\R)}= 0,$$
for all $2<p<\infty$.

\medskip Furthermore, 
\begin{equation} \label{ortogonalth} 
 \|
\chi(D) U_n\|_{L^2(\R)}^2=\sum_{\ell=1}^{L_0} \| \chi(D) V^{(\ell)}\|_{L^2(\R)}^2+\|\chi(D)
 {\rm
r}_n\|_{L^2(\R)}^2+\circ(1),\quad   n\to\infty \, ,\end{equation}
for any function $\chi\in <p>^{1/2}L^\infty(\R)$.}
\end{theorem}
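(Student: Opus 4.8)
The plan is to prove Theorem \ref{result1newth} in two stages: first, extract a profile decomposition purely from the Sobolev embedding $H^{1/2}(\R)\hookrightarrow L^p(\R)$; second, use the conserved quantity $a_u$ together with the rigidity results of Section \ref{preliminarystatementscat} to identify the profiles and bound their number.

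\medskip\noindent\textbf{Step 1: Abstract profile decomposition.} By construction $\|U_n\|_{\dot H^{1/2}(\R)}=1$, and since $a_{u(t)}$ is conserved, relation \eqref{lim1} gives $\|U_n\|_{L^2(\R)}^2=\|u(t_n)\|_{L^2(\R)}^2=\|u_0\|_{L^2(\R)}^2$; thus $(U_n)$ is bounded in $H^{1/2}(\R)$. I would invoke the profile decomposition theorem of G\'erard (adapted to the critical Sobolev embedding $\dot H^{1/2}\hookrightarrow L^p$ on the line, as in \ccite{pgerard1}; here there are only translations as the symmetry group because the scaling has been fixed by normalizing $\|U_n\|_{\dot H^{1/2}}=1$) to write, up to a subsequence,
$$ U_n(y)=\sum_{\ell=1}^{L} V^{(\ell)}(y-y^{(\ell)}_n)+{\rm r}_n^{L}(y), $$
with orthogonal cores $(\underline y^{(\ell)})$, with $\|\chi(D){\rm r}_n^L\|_{L^2}$-type orthogonality \eqref{ortogonalth} for the full $H^{1/2}$ norm (which splits into the $L^2$ and $\dot H^{1/2}$ parts), and with $\limsup_n\|{\rm r}_n^L\|_{L^p}\to 0$ as $L\to\infty$, for each $2<p<\infty$. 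So far $L$ is arbitrary and some $V^{(\ell)}$ may vanish; the content of the theorem is that only finitely many, at most $\|u_0\|_{L^2}^2/4\pi$, are nonzero, and each of these lies in $\mathcal A$.

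\medskip\noindent\textbf{Step 2: Rigidity via $a_u$.} The key mechanism is the conservation and scaling behaviour of $a_u$: by \eqref{sca}, $\tilde a_{U_n}(\zeta)=\tilde a_{u(t_n)}(\mu_n\zeta)=\tilde a_{u_0}(\mu_n\zeta)$ since $\tilde a$ (equivalently $a$) is a conserved quantity. As $n\to\infty$, $\mu_n\to\infty$, so for fixed $\zeta\in\C_+$ the argument $\mu_n\zeta$ escapes to infinity along a fixed ray, and by \eqref{lim1}, $\tilde a_{U_n}(\zeta)\to 1$ locally uniformly on $\C_+$; equivalently $\varphi_{U_n}(\rho)\to 0$ pointwise. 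On the other hand, the stability estimate \eqref{eqstab} and the $L^2$-convergence of the pieces $V^{(\ell)}(\cdot-y^{(\ell)}_n)$ and ${\rm r}_n^L$ being mutually "orthogonal at infinity" allow one to identify the limit: because the $V^{(\ell)}$ live in disjoint regions of space as $n\to\infty$, the Wronskian realization \eqref{w} (or the Fredholm determinant \eqref{awithdet}) asymptotically factorizes, $\tilde a_{U_n}(\zeta)\to \prod_{\ell} \tilde a_{V^{(\ell)}}(\zeta)\cdot\tilde a_{\infty}(\zeta)$, where $\tilde a_\infty$ is the limiting contribution of the remainder — this multiplicativity of $a$ under well-separated superposition is the technical heart and would be proved by splitting the transfer matrix over the separated supports and passing to the limit using \eqref{eqstab} and \eqref{studyau}. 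Since the product equals $1$ identically on $\C_+$ and each factor satisfies $|\tilde a_{V^{(\ell)}}(\zeta)|\le 1$ for $\zeta>0$ and $\ge1$ for $\zeta<0$ by \eqref{size}, each factor must itself be identically $1$, hence $\varphi_{V^{(\ell)}}\equiv 0$, and Lemma \ref{studyfirstpartimaginary} forces $\tilde a_{V^{(\ell)}}\equiv 1$, i.e. $V^{(\ell)}\in\mathcal A$. By Remark \ref{remm}, each nonzero $V^{(\ell)}$ then satisfies $\|V^{(\ell)}\|_{L^2}^2\in 4\pi\N^*$, so $\|V^{(\ell)}\|_{L^2}^2\ge 4\pi$; combined with the $L^2$-orthogonality \eqref{ortogonalth} (at $\chi\equiv 1$) and $\|U_n\|_{L^2}^2=\|u_0\|_{L^2}^2$, this bounds the number of nonzero profiles by $L_0\le\|u_0\|_{L^2}^2/4\pi$, which in particular is finite; take $L\ge L_0$ in the decomposition and relabel. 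Finally, $L_0\ge 1$: if all profiles vanished, then $U_n\to 0$ in every $L^p$, $2<p<\infty$, while $\|U_n\|_{\dot H^{1/2}}=1$; interpolating $\|U_n\|_{\dot H^{1/4+}}$ between $\|U_n\|_{L^4}$ and $\|U_n\|_{\dot H^{1/2}}$-type norms and feeding this into Proposition \ref{crucialestleastnew} (with $\|U_n\|_{L^4}\to 0$) would force $\|U_n\|_{\dot H^{1/2}}\to 0$, a contradiction.

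\medskip\noindent\textbf{Main obstacle.} The delicate point is the asymptotic factorization of $a_{U_n}$ along the profile decomposition — i.e. showing that well-separated translated profiles contribute multiplicatively to the regularized determinant $\mathrm{det}_2(\mathrm{I}-T_{U_n}(\lambda))$ and that the $L^p$-small (but only $L^p$-small, not $L^2$-small) remainder contributes a factor whose modulus on $\R_\pm$ stays on the correct side of $1$. This requires careful use of the operator-norm and Hilbert–Schmidt bounds \eqref{HSuseful}, \eqref{eq:controlnorml0}, the quantitative continuity \eqref{studyau}, and the spatial localization of the $T_{V^{(\ell)}}$ to decouple cross terms; the fact that the remainder is controlled only in $L^p$ for $p>2$ (not in $L^2$) is exactly why estimates like \eqref{studyau} and \eqref{eq5}, which involve $L^p$ and $\dot H^{1/2}$ norms rather than bare $L^2$ norms, are indispensable here.
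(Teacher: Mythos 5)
Your overall strategy --- G\'erard's profile decomposition for the bounded sequence $(U_n)$, a lower bound on $\|U_n\|_{L^4}$ via Proposition~\ref{crucialestleastnew} and the conservation of $a_u$ to guarantee a nonzero profile, an asymptotic factorization of the transmission coefficient along well-separated profiles, and the rigidity Lemma~\ref{studyfirstpartimaginary} --- is exactly the paper's strategy, and you correctly single out the treatment of the remainder as the crux. But the way you propose to close the rigidity step has genuine gaps.

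First, the factorization you write, $\tilde a_{U_n}\to\prod_\ell \tilde a_{V^{(\ell)}}\cdot \tilde a_\infty$, presupposes a limiting factor $\tilde a_\infty$ produced by the remainder. No such limit is available: ${\rm r}_n^L$ is small only in $L^p$ for $p>2$, and its contribution to $a$ through the quadratic trace term $\tr T^2_{{\rm r}_n^L}(\lambda)$ (see \eqref{trace2useful} and \eqref{link24}) is comparable to $\|{\rm r}_n^L\|_{H^{1/2}}^2$, which does not tend to zero. The paper factorizes only the ${\rm det}_4$-regularized coefficient $a^{(4)}$ (Proposition~\ref{gendecomposcatcoef}), for which the remainder contributes $1+o(1)$ thanks to the purely-$L^4$ bound \eqref{eq7}, and then keeps the remainder's quadratic contribution $\varphi_{0,{\rm r}_n^L}\geq 0$ explicitly in the identity \eqref{cor0}. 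Second, your deduction that a product identically equal to $1$, with each factor on the correct side of $1$ on $\R_\pm$, forces every factor to be $1$ uses boundary values of $\tilde a$ on the real axis; all the uniform estimates at your disposal (\eqref{eqstab}, \eqref{eq7}, Proposition~\ref{gendecomposcatcoef}) hold only on $\Gamma_\delta\cap\{\im(\lambda^2)\geq\delta\}$, strictly inside $\C_+$, and do not reach $\R$. The paper instead restricts to $\zeta=i\rho$ and argues by positivity: each summand in \eqref{cor0} is nonnegative, so each must vanish. Third, and crucially, that positivity requires $\varphi_{V^{(\ell)}}\geq 0$, which by Lemma~\ref{studyfirstpartimaginary} requires first proving that $a_{V^{(\ell)}}$ has no zeros in $\C_{++}$; this is Lemma~\ref{lemstep01}, proved by a resolvent argument (an eigenfunction of $L_{V^{(\ell_0)}}(\lambda_0)$ would yield an approximate eigenfunction of $L_{U_n}(\lambda_0)$, contradicting $|a_{U_n}(\lambda_0)|\geq 1/2$ together with Proposition~\ref{resest}). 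Your proposal omits this step entirely, and without it the sign information needed to split the limit \eqref{cor0} term by term is unavailable.
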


\medbreak 

We begin the proof of Theorem \ref{result1newth} with the following proposition.

\begin{proposition}
\label{gendecompo}
{\sl With the previous notations, there exist a sequence of profiles $( {V^{(\ell)} })_{\ell \geq 1}$  in~$H^{\frac 1 2}(\R)$ which are not all zero  and may be regarded as ordered by decreasing $H^{\frac12}$ norm, 
 and a sequence of orthogonal cores $(\underline{y}^{(\ell)})_{\ell \geq 1}$
  such that, up to a subsequence extraction,  we have for all~$L\geq 1$,
\begin{equation} \label{decompNgen}U_n(y) = \sum_{\ell=1}^{L} V^{(\ell)}(y-y^{(\ell)}_n) +{\rm
r}_n^{L}(y),\end{equation}
where
\begin{equation} \label{decompNgen1}
  \limsup_{n\to\infty}\;\|{\rm
r}_n^{L}\|_{L^p(\R)}\stackrel{L\to\infty}\longrightarrow 0\, ,\end{equation} 
for all $2<p<\infty$.
In addition,   \begin{equation} \label{ortogonalth} \|
\chi(D) U_n\|_{L^2(\R)}^2=\sum_{\ell=1}^{L} \| \chi(D) V^{(\ell)}\|_{L^2(\R)}^2+\|\chi(D)
 {\rm
r}_n^L\|_{L^2(\R)}^2+\circ(1),\quad   n\to\infty \, ,\end{equation}
for any $\chi\in <p>^{1/2}L^\infty(\R)$ and any $L\geq 1$.
 }\end{proposition}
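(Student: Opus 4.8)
The plan is to obtain\refeq{decompNgen}--\refeq{ortogonalth} from the classical concentration--compactness/profile decomposition machinery of G\'erard\ccite{pgerard1}, and to invoke the integrable structure only in order to guarantee that the family $(V^{(\ell)})$ is not identically zero. First observe that $(U_n)$ is bounded in $H^{\frac12}(\R)$: by the choice $\mu_n=\|u(t_n)\|^2_{\dot H^{\frac12}(\R)}$ one has $\|U_n\|_{\dot H^{\frac12}(\R)}=1$, while $\|U_n\|_{L^2(\R)}=\|u(t_n)\|_{L^2(\R)}=\|u_0\|_{L^2(\R)}$ by the scale invariance of the $L^2$ norm and the conservation of the mass\refeq{mass}. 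Since $H^{\frac12}(\R)\hookrightarrow L^p(\R)$ for every $2\leq p<\infty$ and this embedding is locally compact (Rellich), the sole obstruction to compactness of $(U_n)$ in $L^p(\R)$ consists of translations escaping to spatial infinity. I would therefore carry out the standard iteration: extract a real core $\underline y^{(1)}=(y^{(1)}_n)$ and a nonzero weak limit $V^{(1)}$ of $U_n(\cdot+y^{(1)}_n)$ in $H^{\frac12}(\R)$ capturing a fixed fraction of the $L^4$-concentration, set ${\rm r}^1_n=U_n-V^{(1)}(\cdot-y^{(1)}_n)$, repeat on ${\rm r}^1_n$, and diagonalize in $L$; the cores are automatically pairwise orthogonal since, had $|y^{(\ell)}_n-y^{(\ell')}_n|$ stayed bounded, the $\ell'$-th extraction would already have picked up the $\ell$-th profile. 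Finally one relabels the $V^{(\ell)}$ by decreasing $H^{\frac12}$-norm, which is legitimate by the summability obtained below.

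Two quantitative points then need to be established. The first is the orthogonality identity\refeq{ortogonalth}: for $\chi\in\langle p\rangle^{1/2}L^\infty(\R)$ the multiplier $\chi(D)$ maps $H^{\frac12}(\R)$ into $L^2(\R)$ and commutes with translations, so expanding $\|\chi(D)U_n\|^2_{L^2}$ along\refeq{decompNgen}, the profile--profile cross terms $\langle\chi(D)V^{(\ell)},(\chi(D)V^{(\ell')})(\cdot-(y^{(\ell')}_n-y^{(\ell)}_n))\rangle$ vanish in the limit because $\chi(D)V^{(\ell)}\in L^2(\R)$ and $|y^{(\ell)}_n-y^{(\ell')}_n|\to\infty$, while the profile--remainder cross terms vanish because ${\rm r}^L_n(\cdot+y^{(\ell)}_n)\rightharpoonup0$ in $H^{\frac12}(\R)$ pairs against $|\chi|^2(D)V^{(\ell)}\in H^{-\frac12}(\R)$; taking $\chi(\xi)=\langle\xi\rangle^{1/2}$ yields in particular $\sum_{\ell\geq1}\|V^{(\ell)}\|^2_{H^{\frac12}(\R)}\leq\limsup_n\|U_n\|^2_{H^{\frac12}(\R)}<\infty$. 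The second point is\refeq{decompNgen1}: a localization (refined Sobolev) argument based on Rellich's theorem shows that if, for every real sequence $(z_n)$, $v_n(\cdot+z_n)\rightharpoonup0$ in $H^{\frac12}(\R)$ then $\|v_n\|_{L^4(\R)}\to0$, and, quantitatively, each profile extracted from a remainder with $\limsup_n\|{\rm r}^{L}_n\|_{L^4}\geq\delta>0$ has $H^{\frac12}$-norm $\gtrsim\delta^2$; combined with the summability $\sum_\ell\|V^{(\ell)}\|^2_{H^{\frac12}}<\infty$ this forces $\limsup_n\|{\rm r}^{L}_n\|_{L^4(\R)}\stackrel{L\to\infty}{\longrightarrow}0$, and interpolating $L^4$ with $L^2$ (bounded) on $(2,4)$ and with $\mathrm{BMO}\supset H^{\frac12}(\R)$ (bounded) on $(4,\infty)$ gives\refeq{decompNgen1} for all $2<p<\infty$.

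It remains to see that the $V^{(\ell)}$ are not all zero, and this is the only place where the complete integrability of DNLS intervenes. Suppose they all vanished; then\refeq{decompNgen1} would force $\|U_n\|_{L^p(\R)}\to0$ for every $2<p<\infty$, in particular $\|U_n\|_{L^4(\R)}\to0$. On the other hand, the conservation of $a_{u(t)}$ (coming from\refeq{eqstab} and the $H^{\frac12}$ well-posedness) together with the scaling relation\refeq{sca} yields $\tilde a_{U_n}(\zeta)=\tilde a_{u_0}(\mu_n\zeta)$, whence $\varphi_{U_n}(\rho)=\varphi_{u_0}(\mu_n\rho)$, so that for any $b_n\geq0$
$$\|\varphi_{U_n}\|_{L^1([b_n,+\infty[)}=\frac1{\mu_n}\,\|\varphi_{u_0}\|_{L^1([\mu_n b_n,+\infty[)}\leq\frac1{\mu_n}\,\|\varphi_{u_0}\|_{L^1(]0,+\infty[)}\stackrel{n\to\infty}{\longrightarrow}0,$$
since $\mu_n\to\infty$ (recall $\|u(t_n)\|_{H^{\frac12}}\to\infty$ and $\|u(t_n)\|_{L^2}$ is fixed) and $\varphi_{u_0}\in L^1(]0,+\infty[)$ — the latter because $\tilde a_{u_0}$ has only finitely many zeros in $\C_+$ (by\refeq{lim1}), across which $\im\ln\tilde a_{u_0}$ stays bounded, while $\varphi_{u_0}$ is integrable near $+\infty$ by Proposition\refer{crucialestleastnew}. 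Plugging $b_n=R\|U_n\|^4_{L^4(\R)}$ into the estimate of Proposition\refer{crucialestleastnew} then gives $1=\|U_n\|^2_{\dot H^{\frac12}(\R)}\lesssim_{R,\|u_0\|_{L^2}}\|\varphi_{U_n}\|_{L^1([R\|U_n\|^4_{L^4},+\infty[)}+\|U_n\|^4_{L^4(\R)}=o(1)$, a contradiction. I expect the main difficulty to lie not in any individual estimate but precisely in this last step: the profile decomposition by itself is powerless against a remainder that carries the whole $\dot H^{\frac12}$-mass at higher and higher frequencies, and it is the rigidity of $\tilde a_u$ under the scaling\refeq{sca} — which pins $\varphi_{U_n}$ down to a fixed profile merely dilated by $\mu_n\to\infty$ — that rules this out, in conjunction with the $\dot H^{\frac12}$ lower bound of Proposition\refer{crucialestleastnew}.
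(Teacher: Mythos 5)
Your proposal is correct and follows essentially the same route as the paper: the structural statements \eqref{decompNgen}--\eqref{ortogonalth} come from G\'erard's profile decomposition (which the paper simply cites as Proposition~4 of~\cite{pgerard1} and you re-derive), and the non-triviality of the profile family is obtained from Proposition~\ref{crucialestleastnew} combined with the conservation of $a_u$ and $\mu_n\to\infty$. You phrase this last step as a contradiction via the scaling identity $\varphi_{U_n}(\rho)=\varphi_{u_0}(\mu_n\rho)$, while the paper states the equivalent direct lower bound $\|U_n\|^4_{L^4(\R)}\geq c$; this is the same argument.
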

\begin{proof}
 Since the sequence  $(U_n)_{n \in \N}$ is  bounded  in~$H^{\frac 1 2}(\R)$,  
 following the work of P. Gerard \ccite{pgerard1},  (see Proposition~4, \ccite{pgerard1}), 
 we infer that there exist a sequence of profiles $( {V^{(\ell)} })_{\ell \geq 1}$  in~$H^{\frac 1 2}(\R)$
 and a sequence of orthogonal cores  $(\underline{y}^{(\ell)})_{\ell \geq 1}$   such that, up to a subsequence extraction,  the properties\refeq{decompNgen}-\eqref{ortogonalth} are satisfied. 
 To conclude the proof of the proposition, 
 it remains to show that  the   profile decomposition\refeq{decompNgen} includes at least one profile $V^{(\ell)}\neq 0$.
 To this end, we will use Proposition\refer{crucialestleastnew}. Since  $\mu_n\stackrel{n\to\infty}\longrightarrow +\infty$, 
 combining the estimate \eqref{lowerboundimpnew} with
 the conservation of $a_u$, we deduce that
 $$ \|u(t_n)\|^4_{L^4(\R)}\geq c\mu_n\quad \forall n\in\N, $$
 for some  positive constant $c$ depending on the initial  data.
 Therefore, $ \|U_n\|^4_{L^4(\R)}\geq c$, which ensures the existence of at least one non zero profile.
      \end{proof} 
      
      \medbreak
      
      \begin{remark}\label{rem2}
 {\sl    Note that thanks to \eqref{ortogonalth}, we have
    $$\sum_{\ell=1}^{\infty} \|  V^{(\ell)}\|_{L^2(\R)}^2\leq \|u_0\|^2_{L^2(\R)},$$
    and 
   $$\sum_{\ell=1}^{\infty} \|  V^{(\ell)}\|_{\dot H^{\frac12}(\R)}^2\leq 1.$$ }
   \end{remark}
\medbreak

\subsubsection{Factorization of $a_u$}
Proposition \ref{gendecompo} reduces the proof of Theorem\refer{result1newth} to showing that 
for all $\ell$, $V^{(\ell)}\in \mathcal{A}$, which, in view of Remarks \ref{remm}, \ref{rem2}  will also imply that the number of (non-zero) profiles in \eqref{decompNgen}
is bounded by $\ds \frac{\|u_0\|^2_{L^2(\R)}} {4 \pi}\cdotp$ 
To prove this property, we will need the following structural result for $a_{U_n}$.

\begin{proposition}
\label{gendecomposcatcoef}
{\sl With the notations of Proposition\refer{gendecompo}, we have for all $\ds 0<\delta<\frac{\pi}2\virgp$  
\begin{equation}\label{str0}
\limsup\limits_{n\rightarrow\infty}|a_{U_n}^{(4)}(\lambda)-\prod\limits_{\ell=1}^La_{V^{(\ell)}}^{(4)}(\lambda)|\stackrel{L\to\infty}\longrightarrow 0\, ,\end{equation} 
uniformly with respect to $\lambda\in \Gamma_\delta, \, \im (\lambda^2)\geq \delta$.}
\end{proposition}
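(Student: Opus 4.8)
The plan is to reduce \eqref{str0} to a factorization statement for a \emph{fixed} finite number of well-separated profiles, prove that factorization by means of the multiplicative structure of regularized determinants, and conclude by induction on the number of profiles.

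\textbf{Step 1: reduction to the profiles.} Write $w_n^L(y)=\sum_{\ell=1}^L V^{(\ell)}(y-y_n^{(\ell)})$, so that $U_n-w_n^L={\rm r}_n^L$. On the region $\Gamma_\delta\cap\{\im(\lambda^2)\ge\delta\}$ one has $|\lambda|^2\le \im(\lambda^2)/\sin\delta$ together with $\im(\lambda^2)\ge\delta$, so the quantities $\frac{|\lambda|^4}{(\im\lambda^2)^2}$ and $\frac{|\lambda|^{1/2}}{(\im\lambda^2)^{3/8}}$ entering \eqref{eq7} are bounded by constants depending only on $\delta$. Since $\|U_n\|_{L^2}=\|u_0\|_{L^2}$ by the scaling and mass conservation, and $\|w_n^L\|_{L^2}$ is bounded thanks to the $L^2$‑orthogonality \eqref{ortogonalth} (applied with $\chi\equiv 1$), the estimate \eqref{eq7} yields
$$\limsup_{n\to\infty}\ \sup_{\lambda}\ \bigl|a^{(4)}_{U_n}(\lambda)-a^{(4)}_{w_n^L}(\lambda)\bigr|\ \le\ C_{\delta,u_0}\Bigl(\limsup_{n\to\infty}\|{\rm r}_n^L\|_{L^4(\R)}\Bigr)^{1/2}\ \xrightarrow[L\to\infty]{}\ 0,$$
by \eqref{decompNgen1}. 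Hence it suffices to prove that, for each fixed $L$, $a^{(4)}_{w_n^L}(\lambda)\to\prod_{\ell=1}^L a^{(4)}_{V^{(\ell)}}(\lambda)$ as $n\to\infty$, uniformly on that region.

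\textbf{Step 2: asymptotic multiplicativity of $a^{(4)}$.} I claim that whenever $v_n,w_n\in H^{\frac12}(\R)$ have uniformly bounded $L^2$ norms and are asymptotically separated, in the sense that $\|T_{v_n}(\lambda)T_{w_n}(\lambda)\|_1\to 0$ and $\|T_{w_n}(\lambda)T_{v_n}(\lambda)\|_1\to 0$ uniformly on the region, then $a^{(4)}_{v_n+w_n}(\lambda)=a^{(4)}_{v_n}(\lambda)\,a^{(4)}_{w_n}(\lambda)+o(1)$ uniformly. To prove this I would use the exact identity for regularized determinants, ${\rm det}_2\bigl(({\rm I}-X)({\rm I}-Y)\bigr)={\rm det}_2({\rm I}-X)\,{\rm det}_2({\rm I}-Y)\,e^{-\tr(XY)}$ for $X,Y\in\mathscr C_2$, with $X=T_{v_n}(\lambda)$, $Y=T_{w_n}(\lambda)$. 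Since $({\rm I}-X)({\rm I}-Y)={\rm I}-(X+Y)+XY$ differs from ${\rm I}-T_{v_n+w_n}={\rm I}-(X+Y)$ by the operator $XY$, whose $\mathscr C_2$‑norm tends to $0$, the Lipschitz bound \eqref{detyest4} for ${\rm det}_2$ (its exponential prefactor being bounded because $\|T_{v_n}(\lambda)\|_2^2=\frac{|\lambda|^2}{\im\lambda^2}\|v_n\|_{L^2}^2$ is bounded on the region) gives $a_{v_n+w_n}=a_{v_n}a_{w_n}e^{-\tr(T_{v_n}T_{w_n})}+o(1)=a_{v_n}a_{w_n}+o(1)$, using also \eqref{eq1}. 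Finally, since $U$ is anti‑diagonal, \eqref{link24} reads $a^{(4)}_u=a_u\,e^{\tr T_u^2/2}$; as $\tr T_{v_n+w_n}^2=\tr T_{v_n}^2+\tr T_{w_n}^2+2\tr(T_{v_n}T_{w_n})$ with the last term $o(1)$ and the first two bounded (by $\frac{|\lambda|^2}{\im\lambda^2}\|v_n\|_{L^2}^2\lesssim_\delta 1$), the exponentials recombine to give $a^{(4)}_{v_n+w_n}=a^{(4)}_{v_n}a^{(4)}_{w_n}+o(1)$, as claimed.

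\textbf{Step 3: the separation estimate, and conclusion.} The main obstacle is to verify the separation hypothesis for translated profiles, namely $\|T^{(\ell)}_n(\lambda)\,T^{(\ell')}_n(\lambda)\|_1\to 0$ uniformly on the region when $\ell\neq\ell'$, where $T^{(\ell)}_n(\lambda)=\tau_{y_n^{(\ell)}}T_{V^{(\ell)}}(\lambda)\tau_{-y_n^{(\ell)}}$. I would first reduce, by density, to $V^{(\ell)},V^{(\ell')}\in C_c^\infty(\R)$, the error being controlled in $\mathscr C_2$ via $\|T_{V-\tilde V}(\lambda)\|_2\le(\sin\delta)^{-1/2}\|V-\tilde V\|_{L^2}$; then factor $T^{(\ell)}_n T^{(\ell')}_n$ as a product of an $\mathscr C_2$‑bounded operator and an $\mathscr C_2$‑small one, by writing $T^{(\ell)}_n=i\lambda(\cL_0-\lambda^2)^{-1}W_\ell\,\Theta_\ell\,W_\ell$ with the scalar weights $W_\ell=|V^{(\ell)}(\cdot-y_n^{(\ell)})|^{1/2}$ and a bounded (unitary‑valued) matrix multiplication $\Theta_\ell$; the middle factor $W_\ell\,i\lambda(\cL_0-\lambda^2)^{-1}W_{\ell'}$ is then Hilbert–Schmidt with norm controlled, via the explicit exponentially decaying kernel \eqref{kernel0}, by $|\lambda|^2 e^{-2\im(\lambda^2)d_n}\|V^{(\ell)}\|_{L^1}\|V^{(\ell')}\|_{L^1}$, where $d_n\to\infty$ is the distance between the (disjoint) supports; since $\im(\lambda^2)\ge\delta$ and $|\lambda|^2\le\im(\lambda^2)/\sin\delta$, the factor $|\lambda|^2 e^{-2\im(\lambda^2)d_n}$ tends to $0$ uniformly in $\lambda$ on the region. (The opposite ordering is treated identically.) With the separation estimate established, Step 2 applies to $v_n=w_n^{L-1}$ and $w_n=V^{(L)}(\cdot-y_n^{(L)})$, because $T_{w_n^{L-1}}(\lambda)T_{w_n}(\lambda)=\sum_{\ell<L}T^{(\ell)}_n(\lambda)T^{(L)}_n(\lambda)$ is a finite sum of terms each tending to $0$; combining this with translation invariance $a^{(4)}_{V^{(L)}(\cdot-y_n^{(L)})}=a^{(4)}_{V^{(L)}}$ (the operator $T_{V^{(L)}(\cdot-y_n^{(L)})}(\lambda)$ being a unitary conjugate of $T_{V^{(L)}}(\lambda)$ since $\cL_0$ commutes with translations) and using the boundedness of $a^{(4)}_{V^{(L)}}$, an induction on $L$ (the base case $L=1$ being immediate) gives $a^{(4)}_{w_n^L}(\lambda)\to\prod_{\ell=1}^L a^{(4)}_{V^{(\ell)}}(\lambda)$ uniformly on the region, which together with Step 1 yields \eqref{str0}.
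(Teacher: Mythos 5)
Your argument is correct, and its first step (reducing to the well-separated sum $w_n^L=\sum_\ell V^{(\ell)}(\cdot-y_n^{(\ell)})$ via \eqref{eq7} and \eqref{decompNgen1}) is exactly the paper's. Where you diverge is in how the factorization over the profiles is obtained. The paper invokes the ad hoc product estimate \eqref{inter1} from Proposition\refer{propresproduct}, which controls $\bigl|{\rm det}_4({\rm I}-A-B)-{\rm det}_4({\rm I}-A){\rm det}_4({\rm I}-B)\bigr|$ by the \emph{operator} norms $\|AB\|+\|BA\|$; the required smallness of $\|T_{V^{(\ell)}(\cdot-y_n^{(\ell)})}(\lambda)T_{V^{(\ell')}(\cdot-y_n^{(\ell')})}(\lambda)\|$ then follows in one line from the kernel bound $\|T_f(\lam)T_g(\lam)\|^2\lesssim \frac{|\lam|^4}{(\im\lam^2)^2}\int e^{-2\im(\lam^2)|x-y|}|f(x)|^2|g(y)|^2\,dxdy$, valid for plain $L^2$ potentials. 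You instead use the classical ${\rm det}_2$ multiplicativity identity with the $e^{-\tr(XY)}$ correction and then pass to $a^{(4)}$ via \eqref{link24}; this forces you to make the cross terms small in \emph{trace} norm, which you achieve through a density reduction to $C_c^\infty$ profiles and the symmetric factorization $U=W\Theta W$. Both routes are sound and yield the same uniformity on $\Gamma_\delta\cap\{\im(\lambda^2)\geq\delta\}$: yours trades the paper's appendix lemma for a textbook determinant identity at the cost of the $L^1\cap L^\infty$ approximation detour and an induction on $L$, while the paper's operator-norm criterion lets it work directly with the $L^2$ profiles and treat all cross terms at once.
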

\begin{proof}
Setting $\ds U_n^{L}(y)=\sum_{\ell=1}^{L} V^{(\ell)}(y-y^{(\ell)}_n)$ and applying \eqref{eq7}, we get for  all $L\geq 1$ and all~$n$ sufficiently large, 
\begin{equation}\label{str1}
\big|a_{U_n}^{(4)}(\lambda)-a_{U_n^{L}}^{(4)}(\lambda)\big|\lesssim_{\|u_0\|_{L^2}, \delta} \|{\rm
r}_n^{L}\|^{\frac 1 2 }_{L^4(\R)} .\end{equation}
Furthermore, \refeq{eq1} and \eqref{inter1} ensure that for all $L$ and all $n$, 
\begin{equation}\label{str2}
\big|a_{U_n^L}^{(4)}(\lambda)-\prod\limits_{\ell=1}^La_{V^{(\ell)}}^{(4)}(\lambda)\big|\lesssim_{\|u_0\|_{L^2}, \delta, L}
\sum\limits_{{1\leq\ell, \ell'\leq L\atop\ell\neq\ell'}}\|T_{V^{(\ell)}(\cdot-y_n^\ell)}(\lambda)T_{V^{(\ell')}(\cdot-y_n^{\ell'})}(\lambda)\|.
\end{equation}
Observing that, for all $f, g$ in $L^2(\R)$ and all $\lam$ in $\C_{++}$, there holds 
$$\|T_f(\lam)T_g(\lam)\|^2\leq C\frac{|\lam|^4}{{(\im (\lam^2))^2}}\int\limits_{\R^2} e^{-2\im(\lam^2)|x-y|} |f(x)|^2|g(y)|^2 dxdy \, ,
$$
we  readily gather that 
\begin{equation}\label{str3}
\big\|T_{V^{(\ell)}(\cdot-y_n^{(\ell)})}(\lambda)T_{V^{(\ell')}(\cdot-y_n^{(\ell')})}(\lambda)\big\|\longrightarrow 0 \quad {\rm as}\,\, 
|y_n^{(\ell)}-y_n^{(\ell')}|\rightarrow \infty,
\end{equation}
uniformly with respect to $\lambda\in \Gamma_\delta, \, \im(\lam^2)\geq \delta$.
Invoking \eqref{str1}, \eqref{str2}, and taking into account\refeq{decompNgen1}, we get \eqref{str0}.
\end{proof}

 \medbreak

As a corollary of the above proposition, we obtain\footnote{using the notations of page \pageref{defimpfunct}} : 
  \begin{cor} 
    \label{coruse1}
{\sl There exists a positive constant $C_{\|u_0\|_{L^2}}$ such that
     \begin{equation}\label{cor0}
    \limsup\limits_{n\rightarrow\infty}   \big|\sum_{\ell=1}^{L} \varphi_{V^{(\ell)}}(\rho)+  \varphi_{0, {\rm
r}_n^{L} }(\rho)\big|  \stackrel{L\to\infty}\longrightarrow 0\end{equation}
for any  $\rho\geq C_{\|u_0\|_{L^2}}$. }\end{cor}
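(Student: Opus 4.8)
The plan is to transfer the asymptotic information of Proposition~\ref{gendecomposcatcoef} from $a^{(4)}_u$ to $\ln\tilde a_u$ and then evaluate along the imaginary axis, where $\varphi_u$ lives. First I would fix $\delta \in (0,\pi/2)$ small and restrict attention to $\zeta = i\rho$ with $\rho$ large (say $\rho \geq C_{\|u_0\|_{L^2}}$, a constant to be chosen). Taking logarithms in \eqref{str0} is not quite immediate because $a^{(4)}_u$ and $a_u$ differ by the $\exp(\tfrac12\tr T_u^2)$ factor \eqref{link24}; so I would instead argue directly on $\ln\tilde a_u$. For $\rho$ sufficiently large, the smallness condition $\|T_u(\sqrt{\zeta})\|<1$ of \eqref{loga} holds uniformly for $u$ in a bounded set of $L^2\cap L^4$ (by \eqref{HSuseful} and \eqref{eq:controlnorml0}), so on the imaginary axis one has the convergent series representation $\ln\tilde a_u(i\rho) = \tfrac{i}{2}\|u\|^2_{L^2} - \sum_{k\geq 2}\tfrac{1}{k}\tr T_u^k(\sqrt{i\rho})$, and the tail $k\geq 3$ is controlled by $\|T_u(\sqrt{i\rho})\|_2^2\|T_u(\sqrt{i\rho})\|$, which by \eqref{HSuseful}, \eqref{eq:controlnorml0} and the interpolation $\|u\|_{L^4}^4 \lesssim \|u\|_{L^2}^2\|u\|_{\dot H^{1/2}}^2$ tends to zero as $\rho\to\infty$, uniformly over the relevant $u$'s.

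Next I would exploit the near-multiplicativity already established. Writing $U_n^L = \sum_{\ell=1}^L V^{(\ell)}(\cdot - y_n^{(\ell)})$, the orthogonality of the cores gives, by the argument of \eqref{str2}--\eqref{str3} applied to $\tr T^2$ (an even simpler bilinear expression than the one controlling $a^{(4)}$), that $\tr T^2_{U_n^L}(\sqrt{i\rho}) - \sum_{\ell=1}^L \tr T^2_{V^{(\ell)}}(\sqrt{i\rho}) \to 0$ and likewise $\|U_n^L\|_{L^2}^2 - \sum_\ell \|V^{(\ell)}\|_{L^2}^2 \to 0$; together with the remainder bound \eqref{str1}-type estimate for $\tr T^2$ (which costs only $\|\mathrm r_n^L\|_{L^4}$ via \eqref{trace2useful} and Hölder, plus the cross term $\langle U_n^L,\mathrm r_n^L\rangle$ controlled using \eqref{decompNgen1}), and using that the higher-order tails vanish, one gets $\ln\tilde a_{U_n}(i\rho) - \sum_{\ell=1}^L \ln\tilde a_{V^{(\ell)}}(i\rho) - \tfrac{i}{2}\|\mathrm r_n^L\|_{L^2}^2 + \tfrac{i}{2}\tr T^2_{\mathrm r_n^L \text{-part}} \to 0$. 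Taking imaginary parts collapses the mass terms (they are real) and, recalling $\varphi_{0,v}(\rho) = -\tfrac12\im\tr T_v^2(\sqrt{i\rho})$ from the definition just below \eqref{loga}, yields $\limsup_n \big|\sum_{\ell=1}^L \varphi_{V^{(\ell)}}(\rho) + \varphi_{0,\mathrm r_n^L}(\rho) - \varphi_{U_n}(\rho)\big| \xrightarrow{L\to\infty} 0$.

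Finally, I would close the loop by showing $\varphi_{U_n}(\rho)\to 0$. Since $a_u$ is conserved and $U_n$ is a rescaling of $u(t_n)$ by \eqref{sca}, one has $\tilde a_{U_n}(\zeta) = \tilde a_{u(t_n)}(\mu_n \zeta) = \tilde a_{u_0}(\mu_n\zeta)$; as $\mu_n\to\infty$ and $\rho$ is fixed, $\mu_n i\rho\to\infty$ in $\C_+$ along a fixed ray, so by Corollary~\ref{coruse} (the $|\lambda|\to\infty$ limit \eqref{lim1}, transferred to $\tilde a$) we get $\tilde a_{U_n}(i\rho)\to 1$, hence $\varphi_{U_n}(\rho)=\im\ln\tilde a_{U_n}(i\rho)\to 0$. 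Substituting this into the previous display gives \eqref{cor0}. The main obstacle I anticipate is the logarithm/bookkeeping step: one must justify uniform convergence of the series \eqref{loga} and the vanishing of all higher traces $\tr T^k$, $k\geq 3$, simultaneously for the sequence $U_n$, for $U_n^L$, and for the individual profiles $V^{(\ell)}$, and verify that the branch of the logarithm matches (which is guaranteed by the normalization $\ln\tilde a_u = o(1)$ at infinity together with analyticity); this is where \eqref{eq6}, \eqref{gentr41} and the interpolation inequality do the heavy lifting, and where one chooses $C_{\|u_0\|_{L^2}}$ large enough that $\|T_{U_n}(\sqrt{i\rho})\|\leq \tfrac12$ uniformly in $n$.
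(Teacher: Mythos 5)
Your proposal is correct and follows essentially the same route as the paper: the paper writes $\varphi_{U_n}(\rho)=\im\big(\ln a^{(4)}_{U_n}(\sqrt{i\rho})\big)+\varphi_{0,U_n}(\rho)$, splits the first term by Proposition~\ref{gendecomposcatcoef} (with \eqref{eq5} and Remark~\ref{rem2} to justify the logarithms) and the second by the almost-orthogonality \eqref{ortogonalth}, and kills $\varphi_{U_n}(\rho)$ via the scaling \eqref{sca} and \eqref{eq6} --- exactly your three steps, up to your use of the series \eqref{loga} in place of the identity \eqref{link24} and of Corollary~\ref{coruse} in place of \eqref{eq6}. One bookkeeping slip to fix: the mass terms $\tfrac{i}{2}\|\cdot\|^2_{L^2}$ are purely imaginary, not real, so they do not drop out when you take imaginary parts, and $\varphi_{0,v}(\rho)=\tfrac12\|v\|^2_{L^2}-\tfrac12\im \tr T^2_v(\sqrt{i\rho})$ rather than $-\tfrac12\im \tr T^2_v(\sqrt{i\rho})$; these mass contributions cancel instead through the $L^2$ almost-orthogonality $\|U_n\|^2_{L^2}=\sum_\ell\|V^{(\ell)}\|^2_{L^2}+\|{\rm r}_n^L\|^2_{L^2}+o(1)$, which you have already established, so the conclusion is unaffected.
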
   
\begin{proof}
 For any $\rho\geq C_{\|u_0\|_{L^2}}$,  with a suitable constant $C_{\|u_0\|_{L^2}}$,
 we can write   $$\varphi_{U_n}(\rho)= \im  \big(\ln a_{U_n}^{(4)}(\sqrt{i\rho})\big) + \varphi_{0, U_n}(\rho) \, .$$
Invoking Proposition \ref{gendecomposcatcoef} together with \eqref{eq5} and Remark  \ref{rem2}, we infer  that for any $\rho\geq C_{\|u_0\|_{L^2}}$,
\begin{equation}
\label{cor1}
\limsup\limits_{n\rightarrow\infty}\big|\ln a_{U_n}^{(4)}(\sqrt{i\rho})-\sum\limits_{l=1}^L\ln a_{V^{(\ell)}}^{(4)}(\sqrt{i\rho})\big|\stackrel{L\to\infty}\longrightarrow 0\, .\end{equation} 
Furthermore, it follows from \eqref{ortogonalth} that for all $L\geq 1$ and all $ \rho>0$, 
\begin{equation}\label{cor2}
\varphi_{0, U_n}(\rho)=\frac12\int_\R\frac{p^2|\hat U_n(p)|^2}{p^2+4\rho^2}dp=\sum\limits_{\ell=1}^L
 \varphi_{0, V^{(\ell)}}(\rho)+\varphi_{0, {\rm r}_n^L}(\rho) +o(1), \quad  n\rightarrow \infty.
\end{equation}
Observe also that
due to the scaling property \eqref{sca} and the bound \eqref{eq6}, we have
\begin{equation}\label{cor3}
\varphi_{U_n}(\rho) \stackrel{n\to\infty}\longrightarrow 0, \quad \forall \rho>0,
\end{equation}
which together with \eqref{cor1} and  \eqref{cor2} gives \eqref{cor0}.
\end{proof}

\medbreak

\subsubsection{End  of the proof of  the rigidity type theorem}\label{secondprop}
Here we complete the proof of Theorem \ref{result1newth}.  This will be done by combining  Lemma \ref{studyfirstpartimaginary} and Corollary \ref{coruse1}. In order   to apply Lemma \ref{studyfirstpartimaginary}, we first need to check that:

\begin{lemma}
\label{lemstep01}
{\sl For each profile $V^{(\ell)}$ involved in the decomposition \refeq{decompNgen}, the spectral coefficient~$a_{V^{(\ell)}}$  does not vanish on~$\C_{++}$.}
\end{lemma}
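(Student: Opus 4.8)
The plan is to argue by contradiction, combining the factorization of Proposition~\ref{gendecomposcatcoef} (through a Hurwitz-type argument) with the scaling relation \eqref{sca} and the conservation of $a_u$. Suppose that for some index $\ell_0$ the function $\tilde a_{V^{(\ell_0)}}$ vanishes at a point $\zeta_0=\lambda_0^2\in\C_+$, equivalently that $a_{V^{(\ell_0)}}(\lambda_0)=0$ for some $\lambda_0\in\C_{++}$. Since the exponential factor in \eqref{link24} never vanishes, we also have $a^{(4)}_{V^{(\ell_0)}}(\lambda_0)=0$.

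First I would promote \eqref{str0} to convergence on compact subsets of $\C_{++}$. By \eqref{eq5}, Remark~\ref{rem2} and the interpolation inequality $\|V^{(\ell)}\|_{L^4}^4\lesssim\|V^{(\ell)}\|_{L^2}^2\|V^{(\ell)}\|_{\dot H^{1/2}}^2$, the series $\sum_{\ell\geq1}|a^{(4)}_{V^{(\ell)}}(\lambda)-1|$ converges uniformly on each region $\Gamma_\delta\cap\{\im(\lambda^2)\geq\delta\}$, so the infinite product $P(\lambda)\eqdefa\prod_{\ell\geq1}a^{(4)}_{V^{(\ell)}}(\lambda)$ is a holomorphic function on $\C_{++}$; together with \eqref{str0} and the uniform-in-$n$ bounds \eqref{eq1} (which make $\{a^{(4)}_{U_n}\}$ a normal family), this gives $a^{(4)}_{U_n}\to P$ uniformly on compact subsets of $\C_{++}$. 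One factor of $P$ vanishes at $\lambda_0$, so $P(\lambda_0)=0$; on the other hand $P\not\equiv0$, because $a^{(4)}_{V^{(\ell)}}(\lambda)\to1$ for each $\ell$ as $|\lambda|\to\infty$ inside a fixed $\Gamma_\delta\cap\C_{++}$ (by \eqref{lim1} and \eqref{link24}), hence $P(\lambda)\to1$ there. By Hurwitz's theorem (equivalently, Rouch\'e on a small circle around $\lambda_0$ on which $P$ does not vanish), for all large $n$ the function $a^{(4)}_{U_n}$ — hence also $a_{U_n}$, by \eqref{link24} — has a zero $\lambda_n$ with $\lambda_n\to\lambda_0$; equivalently $\tilde a_{U_n}$ has a zero $\zeta_n=\lambda_n^2\to\zeta_0$ in $\C_+$.

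The decisive step is the scaling obstruction. Since $U_n=\mu_n^{-1/2}u(t_n,\cdot/\mu_n)$, the relation \eqref{sca} together with the conservation of $a_{u(t)}$ and of the mass gives $\tilde a_{U_n}(\zeta)=\tilde a_{u(t_n)}(\mu_n\zeta)=\tilde a_{u_0}(\mu_n\zeta)$; hence $\zeta_n=w_n/\mu_n$, where $w_n$ is a zero of $\tilde a_{u_0}$ in $\C_+$ with $\arg w_n=\arg\zeta_n\to\arg\zeta_0\in\,]0,\pi[$. Fix $\theta\in\,]0,\arg\zeta_0[$: by \eqref{controbound} applied to $u_0$, the function $\tilde a_{u_0}$ has only finitely many zeros with argument larger than $\theta$, so all of them lie in a disk of some radius $R_0=R_0(\|u_0\|_{L^2})<\infty$. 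Since $\arg w_n>\theta$ for $n$ large, we get $|w_n|\leq R_0$, whence $|\zeta_n|=|w_n|/\mu_n\leq R_0/\mu_n\to0$ as $\mu_n\to+\infty$. This contradicts $|\zeta_n|\to|\zeta_0|>0$, and the lemma follows.

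The main obstacle is the Hurwitz step: one must make sure that Proposition~\ref{gendecomposcatcoef} genuinely delivers uniform convergence on a full neighborhood of $\lambda_0$ in $\C_{++}$, toward a holomorphic limit $P$ that is not identically zero — that is, that no zeros of the $a^{(4)}_{U_n}$ run off to the boundary or to infinity in a way that would invalidate the count. Once this is secured, the rest rests on the conceptually simple but crucial observation that the blow-up scaling $\mu_n\to+\infty$ forces every eigenvalue of the pencil $L_{U_n}$ contained in a fixed sector of $\C_+$ to collapse to the origin, whereas an eigenvalue carried by a fixed nonzero profile $V^{(\ell_0)}$ stays bounded away from $0$.
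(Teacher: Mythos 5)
Your proof is correct, but it follows a genuinely different route from the paper's. The paper argues with the resolvent: if $L_{V^{(\ell_0)}}(\lambda_0)\psi_0=0$ with $\|\psi_0\|_{L^2(\R)}=1$, then $\psi_0(\cdot-y_n^{(\ell_0)})$ is an approximate null vector of $L_{U_n}(\lambda_0)$ (the error tends to $0$ in $L^2$ by orthogonality of the cores and smallness of the remainder), while $L_{U_n}(\lambda_0)$ is invertible with a uniformly bounded inverse because the scaling relation \eqref{sca} and the limit \eqref{lim1} force $|a_{U_n}(\lambda_0)|\geq\frac12$ for $n$ large, via the resolvent bound of Proposition \ref{resest}; this contradicts $\|\psi_0\|_{L^2(\R)}=1$. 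You instead transplant the zero from the profile to $a_{U_n}$ through the factorization of Proposition \ref{gendecomposcatcoef} and a Hurwitz/Rouch\'e count, and then rule it out by the scaling collapse of the zeros of $\tilde a_{u_0}$ lying in a fixed angle, using \eqref{controbound}. Both arguments ultimately rest on the same mechanism --- conservation of $a_u$ plus the scaling \eqref{sca} pushing the spectral data of $U_n$ out to infinity --- but they exploit it on opposite sides: the paper shows $a_{U_n}(\lambda_0)$ cannot be small, you show it must vanish nearby. Your route requires the extra analytic bookkeeping of the infinite product (summability of $\|V^{(\ell)}\|_{L^4(\R)}^4$ from Remark \ref{rem2}, non-triviality of the limit $P$ from \eqref{eq5} and the decay of each factor at infinity in $\Gamma_\delta$), which you handle correctly, and it has the merit of anticipating the zero-counting arguments (Lemmas \ref{intrel} and \ref{intrelrootsnew}) that drive the endgame of the paper; the paper's route is shorter and needs no information on the location of the zeros of $\tilde a_{u_0}$, only the uniform lower bound on $|a_{U_n}(\lambda_0)|$ and Proposition \ref{resest}.
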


\begin{proof}
We  proceed by contradiction, assuming that    there exist $\ell_0 \geq 1$, $\lam_0\in \C_{++}$ and  $\psi_{0} \in H^1(\R)$   such that~$\|\psi_{0} \|_{L^2( \R)}=1$ and 
\begin{equation} \label{asymp}  
L_{V^{(\ell_0)}}(\lambda_0)\psi_0=0.
\end{equation}
Then  we have
\begin{equation} \label{syststudy}\
L_{U_n}(\lambda_0)\psi_{0}(\cdot-y^{(\ell_0)}_n)
= \cR_n(y)\, ,\end{equation}
where 
$$
\cR_n (y)=    -i   \lam_0 \left(
\begin{array}{ccccccccc}
0 &\ds \sumetage{\ell \neq \ell_0} {1 \leq  \ell \leq L} V^{(\ell)}(y-y^{(\ell)}_n) +{\rm
r}_n^{L}(y) \\
\overline {\ds \sumetage{\ell \neq \ell_0} {1 \leq  \ell \leq L} V^{(\ell)}(y-y^{(\ell)}_n) +{\rm
r}_n^{L}(y)} &0
\end{array}
\right)\psi_{0}(y-y^{(\ell_0)}_n).$$
The scaling property\refeq{sca} and the estimate  \eqref{eq5} 
 ensure that  $ \ds |a_{U_n}(\lam_0)|\geq \frac 1 2 \virgp$   for $n$ large enough.  It then follows from Proposition\refer{resest}   that  the operator~$L_{U_n}(\lambda_0) $  is invertible 
 and
 $$\|L^{-1} _{U_n}(\lambda_0)\| \leq C_{\lambda_0, \|u_0\|_{L^2}},$$  
 which implies that
 \begin{equation}
  \label{controlpsi0}\|\psi_{0} \|_{L^2( \R)} \leq  C_{\lambda_0, \|u_0\|_{L^2}} \|\cR_n \|_{L^2( \R)} \, .\end{equation}
Consider $\cR_n$.
The orthogonality condition between the cores ensures that, for all~$\ell \neq \ell_0$,  there holds
$$\|V^{(\ell)}(\cdot-y^{(\ell)}_n) \, \psi_{0}(\cdot -y^{(\ell_0)}_n)\|_{L^2( \R)} \stackrel{n \to  + \infty} \longrightarrow 0\, ,$$
 which together with the fact that, for all $2< p< \infty$,  
 $\ds  \limsup_{n\to\infty}\;\|{\rm
r}_n^{L}\|_{L^p(\R)}\stackrel{L\to\infty}\longrightarrow 0 $    allows us to conclude that
\begin{equation} \label{l2normsyststudy} \|\cR_n\|_{L^2( \R)} \stackrel{n \to  + \infty} \longrightarrow 0\, .\end{equation}
Combining \refeq{controlpsi0}, \eqref{l2normsyststudy} and taking into account the fact that $\|\psi_{0} \|_{L^2( \R)} =1$,  we get  a contradiction.  
\end{proof}

\medbreak

We are now in position to finish   the proof of Theorem \ref{result1newth}. From Lemmas \ref{studyfirstpartimaginary} and \ref{lemstep01},  we have for all $\ell\geq 1$,
$$\varphi_{V^{(\ell)}}(\rho)\geq 0, \quad \forall \rho>0.$$
Recalling that 
$$\varphi_{0, {\rm r}_n^L}(\rho)=\frac12\int_\R\frac{p^2|\hat {\rm r}_n^L(p)|^2}{p^2+4\rho^2}dp\geq 0,$$
we deduce from Corollary \ref{coruse1} that for all $\rho\geq C_{\|u_0\|_{L^2}}$,
\begin{equation}\label{nullim}
\varphi_{V^{(\ell)}}(\rho)= 0, \quad \forall \,\ell\geq 1,
\end{equation}
and 
\begin{equation}
\limsup\limits_{n\rightarrow\infty}   \varphi_{0, {\rm
r}_n^{L} }(\rho) \stackrel{L\to\infty}\longrightarrow 0.\end{equation}
In view of Lemma \ref{studyfirstpartimaginary}, Identity \eqref{nullim}
implies that $a_{V^{(\ell)}}\equiv 1$ on $\C_{++}$ for all $\ell\geq 1$.  Accordingly to Remarks\refer{remm} and\refer{rem2}, this ensures
that the number of non zero profiles 
in the decomposition \eqref{decompNgen} is finite and bounded by  $\ds  \frac{\|u_0\|^2_{L^2(\R)}} {4 \pi}\cdotp$   Denoting this number by $L_0$ and setting 
${\rm r}_n={\rm r}_n^{L_0} $, we get   
$$\lim_{n\to\infty}\;\|{\rm
r}_n\|_{L^{p}(\R)}= 0,$$
for all $2<p<\infty$, which concludes the proof of Theorem \ref{result1newth}.

\subsubsection{A key result} 
Our aim now is to show that the profile decomposition given by Theorem\refer{result1newth} is in contradiction with the conservation of $a_u$. To this end,  we will need the following result that ensures the closeness of the functions $a_{U_n}$ and $a_{r_n}$.   \begin{proposition} 
 \label{preturbationspectrumgen}
{\sl Let   $(V^{(\ell)})_{1 \leq \ell \leq L}$ be a finite family of functions in $\mathcal{A}$. For $\underline{\varphi}=(\varphi_\ell)$, $\underline{y}=(y_\ell)$  in~$\R^L$,  we denote $\ds u_{\underline{\varphi}, \underline{y}}(y)= \sum_{\ell=1}^{L} e^{i \varphi_\ell} V^{(\ell)}(y-y_\ell)$.
 Then,   for all~$\ds 0<\delta<\frac\pi2$ and  all~$ m>0$, we have, 
  \begin{equation} 
\label{detyest3imp} a_{u_{\underline{\varphi},\underline{y}}+{ r}} (\lam) - a_{{ r}} (\lam) \longrightarrow 0\, ,
\end{equation} as $\ds \min_{\ell \neq \ell'}| y_\ell - y_{\ell'}| \rightarrow \infty$    and  $\|{ r}\|_{L^4}\rightarrow 0$,
 ${ r}\in L^2(\R) \cap L^4(\R)$ with $\|{ r}\|_{L^2} \leq m$, uniformly with respect to  $\lam\in\Gamma_\delta$ and $\underline{\varphi}\in \R^L$.} 
\end{proposition}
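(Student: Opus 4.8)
\textbf{Proof strategy for Proposition \ref{preturbationspectrumgen}.}
The plan is to reduce the statement to the already-established estimates for $a^{(4)}_u$ together with a control on the trace $\tr T^2_u(\lam)$, using the relation \eqref{link24}, namely $a_u(\lam)=a^{(4)}_u(\lam)\exp\bigl(-\tfrac12\tr T^2_u(\lam)\bigr)$. Since each $V^{(\ell)}\in\mathcal A$ lies in $H^{\frac12}(\R)$ and the perturbation $r$ stays bounded in $L^2$, the prefactors $\exp\bigl(C\tfrac{|\lam|^2}{\im(\lam^2)}\|\cdot\|^2_{L^2}\bigr)$ appearing in Proposition \ref{stabilityestnew} and Lemma \ref{lema4first} are uniformly bounded for $\lam\in\Gamma_\delta$; this is what makes the $\Gamma_\delta$-uniformity meaningful. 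First I would write $u_{\underline\varphi,\underline y}+r = u^L_{\underline\varphi,\underline y}+r$ and split the difference $a_{u_{\underline\varphi,\underline y}+r}(\lam)-a_{r}(\lam)$ into two pieces via the $\mathscr C_4$-analogues: one controlled by Proposition \ref{gendecomposcatcoef}-type reasoning (the cross terms $\|T_{V^{(\ell)}(\cdot-y_\ell)}(\lam)\,T_{V^{(\ell')}(\cdot-y_{\ell'})}(\lam)\|$ and $\|T_{V^{(\ell)}(\cdot-y_\ell)}(\lam)\,T_r(\lam)\|$), which vanish as $\min_{\ell\neq\ell'}|y_\ell-y_{\ell'}|\to\infty$ and $\|r\|_{L^4}\to 0$ by the Hilbert–Schmidt decay estimate
\[
\|T_f(\lam)T_g(\lam)\|^2\leq C\frac{|\lam|^4}{(\im(\lam^2))^2}\int_{\R^2}e^{-2\im(\lam^2)|x-y|}|f(x)|^2|g(y)|^2\,dx\,dy,
\]
and a second piece coming from the $\exp$ factors.

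For the $\exp$ factor, the key observation is that $\tr T^2_u(\lam)$ is \emph{not} additive under the superposition $u\mapsto u_{\underline\varphi,\underline y}+r$, but its obstruction to additivity is again a sum of cross terms. Concretely, from \eqref{trace2useful} (or better from the bilinear form of $\tr T^2$ in physical space via \eqref{kernel0}) one has
\[
\tr T^2_{f+g}(\lam)=\tr T^2_f(\lam)+\tr T^2_g(\lam)+2\lam^2\iint_{x>y}e^{i\lam^2(x-y)}\bigl(\overline{f(x)}g(y)+\overline{g(x)}f(y)\bigr)\,dx\,dy+(\text{sym.}),
\]
and the cross term is $O\bigl(\tfrac{|\lam|^2}{\im(\lam^2)}\|f\|_{L^2}\|g\|_{L^2}\bigr)$ with exponential spatial localization, so that when $f=V^{(\ell)}(\cdot-y_\ell)$, $g=V^{(\ell')}(\cdot-y_{\ell'})$ with $\ell\neq\ell'$, or $g=r$, it tends to $0$ as $\min_{\ell\neq\ell'}|y_\ell-y_{\ell'}|\to\infty$ and $\|r\|_{L^2}\,\|r\|_{L^4}$-weighted $\to 0$; the self-terms $\tr T^2_{V^{(\ell)}}$ and $\tr T^2_r$ cancel in the telescoping against $a^{(4)}_{V^{(\ell)}}$ and $a^{(4)}_r$ through \eqref{link24}. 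Assembling: $a_{u_{\underline\varphi,\underline y}+r}(\lam)-\prod_{\ell}a_{V^{(\ell)}}(\lam)\cdot a_r(\lam)$ is, up to uniformly bounded prefactors, a finite sum of cross terms all going to $0$ uniformly on $\Gamma_\delta$; and since each $V^{(\ell)}\in\mathcal A$ satisfies $\tilde a_{V^{(\ell)}}\equiv 1$ on $\C_+$, i.e. $a_{V^{(\ell)}}(\lam)=e^{-\frac i2\|V^{(\ell)}\|^2_{L^2}}$ is a constant of modulus one, a further care with the phases $e^{i\varphi_\ell}$ (which do not affect $T_u$-norms, only enter through the cross terms harmlessly) shows $\prod_\ell a_{V^{(\ell)}}(\lam)$ contributes a fixed unimodular constant — but in fact the cleanest route is to note the difference $a_{u_{\underline\varphi,\underline y}+r}-a_r$ directly, absorbing $\prod_\ell a_{V^{(\ell)}}$ into the telescoping, never needing its explicit value, only $\|V^{(\ell)}\|_{H^{1/2}}$ bounds.

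The main obstacle I anticipate is the \emph{uniformity in $\underline\varphi\in\R^L$}: the phases $e^{i\varphi_\ell}$ appear inside $T_{u_{\underline\varphi,\underline y}}(\lam)$ in a way that is not a simple gauge, so one must check that every estimate above is phase-independent. This is handled by observing that $\|T_{e^{i\varphi}V}(\lam)\|_n=\|T_V(\lam)\|_n$ for any phase (conjugation by a constant unimodular diagonal), and that the cross-term bounds depend only on $|V^{(\ell)}|$, $|r|$, never on the phases; the $\varphi_\ell$-dependence in the $\mathscr C_4$ determinant difference is then majorized uniformly by the stability estimate \eqref{eq7} applied with $u_1=u_{\underline\varphi,\underline y}+r$ and a reference potential. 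A secondary technical point is ensuring that $\|r\|_{L^4}\to 0$ with $\|r\|_{L^2}\leq m$ fixed suffices to kill the $T_r$-cross terms: this follows because the cross-term bound carries a factor $\|r\|_{L^4}$ (via an $L^4$-$L^{4/3}$ or $L^4$-$L^4$ split in the bilinear integral) times $\|r\|_{L^2}\lesssim m$, and the exponential-in-$L^2$ prefactor is controlled by $m$ alone. Once these uniformity checks are in place, the conclusion \eqref{detyest3imp} follows by combining the vanishing of all cross terms with the boundedness of the determinant prefactors on $\Gamma_\delta$.
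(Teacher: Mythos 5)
Your overall strategy --- reducing to $a^{(4)}$ via \eqref{link24}, exploiting the quasi-multiplicativity of ${\rm det}_4$ and the quadratic structure of $\tr T_u^2$, and killing cross terms through the exponential localization of the free resolvent --- is the same as the paper's. But there are two genuine gaps. The first is your claim that one can ``absorb $\prod_\ell a_{V^{(\ell)}}$ into the telescoping, never needing its explicit value.'' This cannot work: for fixed $\lam\in\Gamma_\delta$ with $|\lam|$ bounded below, your own cross-term analysis shows that, up to $o(1)$,
\[
a_{u_{\underline\varphi,\underline y}+r}(\lam)-a_r(\lam)=\Bigl(\prod_{\ell=1}^{L} a_{V^{(\ell)}}(\lam)-1\Bigr)\exp\Bigl(-\tfrac12\tr T^2_r(\lam)\Bigr)+o(1),
\]
and the exponential factor has modulus bounded below by $e^{-Cm^2/\sin\delta}$ when $\|r\|_{L^2}\leq m$. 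So the conclusion \eqref{detyest3imp} is \emph{equivalent} to $\prod_\ell a_{V^{(\ell)}}\equiv1$; a ``fixed unimodular constant'' is not enough. This is precisely where the hypothesis $V^{(\ell)}\in\mathcal A$ is consumed: $\tilde a_{V^{(\ell)}}\equiv1$ gives $a_{V^{(\ell)}}\equiv e^{-\frac i2\|V^{(\ell)}\|^2_{L^2}}$, and Remark \ref{remm} gives $\|V^{(\ell)}\|^2_{L^2}\in4\pi\N$, whence $a_{V^{(\ell)}}\equiv1$. The paper's proof uses this identity at the very first step of Lemma \ref{lem2infty}, when it majorizes $|a_{u_{\underline\varphi,\underline y}+r}-a_r|$ by $|\cJ_1|+|\cJ_2|+|\cJ_3|$ with $\cJ_2=a^{(4)}_{u_{\underline\varphi,\underline y}+r}-\prod_\ell a^{(4)}_{V^{(\ell)}}$.

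The second gap concerns the claimed uniformity over all of $\Gamma_\delta$. This sector contains $\lam\to0$, where $\im(\lam^2)\to0$: the localization rate in $e^{-2\im(\lam^2)|x-y|}$ degenerates, so the separation $\min_{\ell\neq\ell'}|y_\ell-y_{\ell'}|\to\infty$ no longer kills the cross terms uniformly, and the bound \eqref{eq5} for $|a^{(4)}_r-1|$ carries the factor $|\lam|^4/(\im(\lam^2))^3\gtrsim|\lam|^{-2}$, which blows up. (This is also why Proposition \ref{gendecomposcatcoef} is only stated for $\im(\lam^2)\geq\delta$.) The paper therefore splits $\Gamma_\delta$ into the regime $|\lam|\geq\alpha$ --- which is the one your argument actually covers --- and the regime $\lam\to0$, where a different mechanism operates: there all relevant quantities vanish as $\lam\to0$, uniformly in $\underline y$, $\underline\varphi$ and in $r$ with $\|r\|_{L^2}^2\leq m$ (no separation or $L^4$-smallness needed), via $L^1$-approximation of the profiles, the bound $\|T_f(\lam)T_g(\lam)\|\leq C|\lam|^2(\im(\lam^2))^{-1/2}\|f\|_{L^1}\|g\|_{L^2}$, and Estimate \eqref{detyest6}. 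Your write-up needs this low-frequency regime treated separately; the remaining ingredients (phase-invariance of the Schatten norms, the bilinear-form structure of $\tr T^2$, the role of $\|r\|_{L^4}\to0$) are correct and match the paper.
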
 

\begin{proof} In order to establish the result, we shall consider separately the cases~$|\lam|\ll1$ and~$|\lam|\gtrsim 1$, reducing the proof of Proposition \ref{preturbationspectrumgen} to the two   following lemmas:
\begin{lemma} 
\label{lem1}
{\sl Under the assumptions of Proposition \refer{preturbationspectrumgen}, for all $\ds 0<\delta<\frac\pi2$ and  all $ m>0$, 
we have
$$a_{u_{\underline{\varphi},\underline{y}}+r} (\lam) -a_{{r}}(\lambda)\underset{\lambda \to 0,\,\lam\in \Gamma_\delta}{\longrightarrow} 0\,$$
uniformly with respect to $\underline{\varphi}, \,\underline{y} \in \R^L$ and
${ r}\in L^2(\R)$ satisfying   $\|{ r}\|^2_{L^2(\R)} \leq m$.}
\end{lemma}
\medbreak

\begin{lemma} 
\label{lem2infty}
{\sl For all $\ds 0<\delta<\frac\pi2$ and all $\alpha>0$, we have   
$$ a_{u_{\underline{\varphi},\underline{y}}+{r}} (\lam) -a_{{ r}}(\lam)
  \longrightarrow 0\, ,$$  as $ \ds \min_{\ell \neq \ell'} | y_\ell - y_{\ell'}| \to \infty $ and 
  $\|{ r}\|_{L^4(\R)}\to 0$ with $\|{ r}\|^2_{L^2(\R)} \leq m$,  uniformly with respect to~$\underline{\varphi}$  in~$\R^L$ and~$\lam$ in~$\Gamma_\delta \cap \{ |\lam|\geq \alpha\}$.  }
\end{lemma}

\medskip We start with the proof of the first lemma:
\begin{proof}[Proof of Lemma {\rm\ref{lem1}}] 

\medskip  First, recall that in view of   \refeq{trace2useful}, \refeq{link24} and \refeq{eq1}, we have 
$$   \big|a_{u_{\underline{\varphi},\underline{y}}+{ r}} (\lam) - a_{{r}} (\lam) \big|  
\lesssim_{\delta, m}
\big|\tr T_{u_{\underline{\varphi},\underline{y}}+{r}}^2(\lam)-\tr T_{{r}}^2(\lam)\big| +  
 \big|a^{(4)}_{u_{\underline{\varphi},\underline{y}}+{ r}}(\lam)- a^{(4)}_{{ r}}(\lam) \big|   \, .$$
Then,  note that it  follows from\refeq{trace2useful} that,  for all $\lam \in \Gamma_\delta$, all $\underline{\varphi}, \underline{y}\in \R^L$
and ${ r}\in L^2(\R)$ with~$\|{ r}\|^2_{L^2(\R)} \leq m$, there holds  
$$\big|\tr T_{u_{\underline{\varphi},\underline{y}}+{ r}}^2(\lam)-\tr T_{{ r}}^2(\lam)\big| \lesssim_{\delta, m}
|\lam|\sum_{\ell=1}^L \big\|(p+2\lam^2)^{-1/2}\hat V^{(\ell)} \big\|_{L^2(\R)}\,
\underset{\lam \to 0, \, \lam\in \Gamma_\delta}{\longrightarrow} 0.$$

\smallskip  To estimate 
$a^{(4)}_{u_{\underline{\varphi},\underline{y}}+{ r}}(\lam)- a^{(4)}_{{ r}}(\lam)$,
we use 
\refeq{detyest6}. Approximating the potentials $V^{(\ell)}$ by functions of $L^2(\R)\cap L^{1}(\R)$ and using that
$$\|T_f(\lam)T_g(\lam)\|\leq C\frac{|\lam|^2}{\sqrt{\im (\lam^2)}}\|f\|_{L^1}\|g\|_{L^2}, \quad \forall \lam \in \C_{++}, \, f\in L^1(\R), \, g\in L^2(\R),$$
one easily checks  that
$$
\|T_{u_{\underline{\varphi}, \underline{y}}}^2(\lam)\|+\|T_{u_{\underline{\varphi}, \underline{y}} }(\lam) T_{{ r}}(\lam)\|\underset{\lam\in \Gamma_\delta, \,\lam \to 0}{\longrightarrow} 0,
$$
uniformly with respect to $\underline{\varphi}, \,\underline{y} \in \R^L$ and
${r}\in L^2(\R)$, $\|{ r}\|^2_{L^2(\R)} \leq m$.
Therefore, applying \refeq{detyest6}, we get
$$
a^{(4)}_{u_{\underline{\varphi},\underline{y}}+{ r}}(\lam)- a^{(4)}_{{ r}}(\lam)
\underset{\lam\in \Gamma_\delta, \,\lam \to 0}{\longrightarrow} 0,
$$
uniformly with respect to $\underline{\varphi}, \,\underline{y} \in \R^L$ and
${r}\in L^2(\R)$ with $\|{r}\|^2_{L^2(\R)} \leq m$,
which  achieves the   proof of the lemma.

 \end{proof} 
 
 \medbreak

\noindent{\it Proof of Lemma {\rm\ref{lem2infty}}}.
We start by observing 
that according to\refeq{trace2useful},\refeq{link24},  \refeq{eq1} and the fact that $V^{(\ell)} \in \mathcal{A}$, we have 
for all $\lam\in \Gamma_\delta$, all $\underline{\varphi}, \underline{y}\in\R^L$,
and ${ r}\in L^2(\R)$ with $\|{ r}\|^2_{L^2(\R)} \leq m$  
\begin{equation*}\begin{split}
 |a_{u_{\underline{\varphi},\underline{y}}+{ r}} (\lam) - a_{{ r}} (\lam)|  
\lesssim_{\delta, m} \,\,&\Big|\underbrace{a_{{ r}}^{(4)}(\lam)-1}_{\cJ_1}\Big |+\Big|\underbrace{
a^{(4)}_{u_{\underline{\varphi},\underline{y}}+{ r}} (\lam)-\prod^L_{\ell=1}a^{(4)}_{V^{(\ell)}} (\lam)}_{\cJ_2}\Big|+
\\&+
\Big|\underbrace{\tr \left(T_{u_{\underline{\varphi},\underline{y}}+{ r}}^2(\lam)-T_{{ r}}^2(\lam)-\sum^L_{\ell=1}T_{V^{(\ell)}}^2(\lam)\right)}_{\cJ_3}\Big|
 \, \cdotp\end{split}\end{equation*}
 By virtue of Estimate \eqref{eq5}, we have 
 $$\big|\cJ_1\big|\lesssim_{\delta, \alpha, m}\|r\|^4_{L^4(\R)},$$
 for all $\lam\in \Gamma_\delta$, $|\lam|\geq \alpha$ and all $r\in L^2(\R)\cap L^4(\R)$ with $\|{r}\|^2_{L^2(\R)} \leq m$.
 
 \smallskip We next address  $\cJ_2$. Arguing as  in the proof of Proposition \ref{gendecomposcatcoef},
  we get
  $$\big|\cJ_2\big|\lesssim_{\delta, \alpha, m} \|r\|_{L^4(\R)}^{\frac 1 2 }+\sum\limits_{{1\leq\ell, \ell'\leq L\atop\ell\neq\ell'}}\|T_{e^{i\varphi_\ell}V^{(\ell)}(\cdot-y_\ell)}(\lambda)T_{e^{i\varphi_\ell'}V^{(\ell')}(\cdot-y_{\ell'})}(\lambda)\|,$$
  with 
  $$\sum\limits_{{1\leq\ell, \ell'\leq L\atop\ell\neq\ell'}}\|T_{e^{i\varphi_\ell}V^{(\ell)}(\cdot-y_\ell)}(\lambda)T_{e^{i\varphi_\ell'}V^{(\ell')}(\cdot-y_{\ell'})}(\lambda)\|\underset{ \min\limits_{\ell \neq \ell'}| y_\ell - _{\ell'}| \to \infty}{\longrightarrow} 0,$$
  uniformly with respect to $\lam\in \Gamma_\delta$, $|\lam|\geq \alpha$,  and $\underline{\varphi}\in \R^L$.
  \medskip   

 In order to end the proof of the lemma, it remains to estimate $\cJ_3$:
 $$\cJ_3=
 2\sumetage  { 1\leq   \ell, \ell' \leq  L}{  \ell \neq  \ell' } \tr \bigl(T_{e^{i \varphi_\ell}V^{(\ell)}(\cdot -y_\ell)}(\lam)\, T_{e^{i \varphi_{\ell'}}V^{(\ell')}(\cdot -y_{\ell'})}(\lam)\bigr) + 2 \sum_{ 1\leq   \ell \leq  L}\tr\bigl(T_{e^{i \varphi_\ell}V^{(\ell)}(\cdot -y_\ell)}(\lam)\, T_{{r} }(\lam)\bigr).$$
 Clearly,
 $$\big | \tr(T_f(\lam)T_g(\lam))\big |\leq 2|\lam|^2\int\limits_{\R^2} e^{-2\im(\lam^2)|x-y|} |f(x)||g(y)| dxdy, \quad \forall \lam \in \C_{++}, \, f\in L^2(\R), \, g\in L^2(\R),$$
 which readily implies that
 $$\sumetage  { 1\leq   \ell, \ell' \leq  L}{  \ell \neq  \ell' } \tr \bigl(T_{e^{i \varphi_\ell}V^{(\ell)}(\cdot -y_\ell)}(\lam)\, T_{e^{i \varphi_{\ell'}}V^{(\ell')}(\cdot -y_{\ell'})}(\lam)\bigr) \underset{ \min\limits_{\ell \neq \ell'}| y_\ell - y_{\ell'}| \to \infty}{\longrightarrow} 0,$$
  uniformly with respect to $\lam\in \Gamma_\delta$ and $\underline{\varphi}\in \R^L$, and
  $$ \sum_{ 1\leq   \ell \leq  L}\tr\bigl(T_{e^{i \varphi_\ell}V^{(\ell)}(\cdot -y_\ell)}(\lam)\, T_{{r} }(\lam)\bigr)
\underset{\|r\|_{L^2}\leq m, \|r\|_{L^4}\to 0}{\longrightarrow} 0,$$
  uniformly with respect to $\lam\in \Gamma_\delta$ and $\underline{y},\underline{\varphi}\in \R^L$.
 This completes the proof of the lemma and therefore, the proof of Proposition  \ref{preturbationspectrumgen}  as well.\end{proof} 
 
\medbreak 

 \subsection{End of the proof of   Theorem\refer{Mainth}} \label{End}

Assume that $u_0\in H^{\frac 1 2}(\R)$, with $\|u_0\|^2_{L^2(\R)}\geq 4\pi$, is such that the solution $u\in C([0, T^*[, H^{\frac12}(\R))$ of the DNLS equation with 
initial data $u(0)=u_0$ verifies~$\sup\limits_{0\leq t< T^*}\|u(t)\|_{H^{\frac 1 2}(\R)}=+\infty$. For a sequence $(t_n)_{n \in \N} \subset [0, T^*[$ such that~$\|u(t_n)\|_{H^{\frac 1 2}(\R)}\to +\infty$, as $n$ goes to infinity, 
set as above   $$\ds U_n(x)= \frac 1 {\sqrt{\mu_n}} u(t_n, \frac x { \mu_n}) \with \mu_n=\|u(t_n)\|^2_{\dot H^{\frac 1 2}(\R)}\cdotp $$ 
Let us  next take a sequence of functions~$(u^{(k)}_0)_{k \in \N}$ in  $\cS_{\rm reg} (\R)$ that converges in $H^{\frac12}$ to $u_0$.
Denoting by $u^{(k)}(t)$ the solution of the DNLS equation with initial data $u^{(k)}(0)=u^{(k)}_0$, we have
$$ u^{(k)}  \stackrel{k \to  + \infty} \longrightarrow u  \, \, \mbox{in} \, \,  C([0, T], H^{\frac 1 2}(\R)), \, \, \forall \,\,T< T^*\, .$$
Consequently, 
the functions~$\ds U_n^{(k)}\eqdefa   \frac 1 {\sqrt{\mu_n}} u^{(k)}\Big(t_n, \frac \cdot { \mu_n}\Big)$ belong to $\cS_{\rm reg} (\R)$    and satisfy for any integer~$n$\begin{equation}
\label{convbcexfin}U_n^{(k)}  \stackrel{k \to  + \infty} \longrightarrow U_n  \, \, \mbox{in} \, \,  H^{\frac 1 2}(\R) \, .\end{equation}

 Since for any fixed~$0<\theta<\pi$,  the function $\tilde a_{u_0}(\zeta) $ admits at most a  finite number of zeros  in the  angles 
 $\{\zeta\in \C: \, \theta\leq \arg \zeta<\pi\}$,   there exists $\ds \frac\pi2<\theta_0<\pi$ such that   \begin{equation}
\label{invscalnew} \tilde a_{u_0} (\zeta) \neq 0\, , \, \, \forall  \zeta \, ,  \, \, \theta_0 \leq  \arg \zeta  < \pi \,.\end{equation}
We denote by $\zeta_{n, j}^{(k)}$, $j=1, \dots, I_{k}$ the zeros of $\tilde a_{U_n^{(k)}}(\zeta)$ in the angle $\{\zeta\in \C: \, \theta_0\leq \arg \zeta<\pi\}$ (because of the scaling property \refeq{sca}, the number of zeros $I_k$ does not depend on $n$).

\medskip The key ingredient in the proof of Theorem\refer{Mainth} is given by  the following lemma,   the proof of which is postponed to  the end of this paragraph. \begin{lemma}
\label{claimresult2fin}
{\sl  With the above notations, we have
\begin{equation}\label{number}
\liminf\limits_{k\rightarrow \infty}I_{k}\geq 1.\end{equation}
Moreover, numbering the zeros $\zeta_{n, j}^{(k)}$  so that  
$$ {\rm Re} (\zeta_{n, 1}^{(k)})= \min_{1 \leq j \leq I_{k}} {\rm Re} (\zeta_{n, j}^{(k)})  \,,$$ one has \begin{equation}
\label{propzerosfinalbis} \liminf_{n \to  \infty} \liminf_{k \to  \infty} {\rm Re} (\zeta_{n, 1}^{(k)})= c_0 < 0   \, . \end{equation} } 
 \end{lemma}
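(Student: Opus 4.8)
The plan is to read the zeros of the regularized transmission coefficients off the profile decomposition of Theorem~\ref{result1newth}, in which $U_n=\sum_{\ell=1}^{L_0}V^{(\ell)}(\cdot-y_n^{(\ell)})+{\rm r}_n$ with each $V^{(\ell)}\in\mathcal A\setminus\{0\}$, i.e.\ $\tilde a_{V^{(\ell)}}\equiv 1$ and $\|V^{(\ell)}\|_{L^2}^2\in 4\pi\N$. The guiding picture is that an element of $\mathcal A$ is an ``edge'' potential: the algebraic soliton $u_{0,c}$ is the $E\to 0^+$ limit of the bright solitons $u_{E,c}$, whose coefficient $\tilde a_{u_{E,c}}$ has a simple zero at $\zeta_{E,c}=-\tfrac c4+\tfrac i2\sqrt E$, which tends to $-\tfrac c4<0$. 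Thus an $\mathcal S_{\rm reg}$ approximation of $U_n$ should carry genuine zeros of $\tilde a$ near the negative real axis, and the heart of the matter is to locate the leftmost of them. Throughout I will use the identity $\tilde a_{U_n^{(k)}}(\zeta)=\tilde a_{u_0^{(k)}}(\mu_n\zeta)$, which follows from \eqref{sca} and the conservation of $a_{u^{(k)}}$ along the smooth flow, and which already shows that $I_k$ is independent of $n$.

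To prove \eqref{number}, note first that by the choice of $\theta_0$, Corollary~\ref{coruse} and the stability estimate \eqref{eqstab}, the function $\tilde a_{u_0^{(k)}}$ does not vanish on the ray $e^{i\theta_0}\R_+$ for $k$ large, so Lemma~\ref{intrel} yields $I_k=\frac{1}{2i\pi}\int_0^{+\infty e^{i\theta_0}}\frac{\tilde a'_{u_0^{(k)}}(s)}{\tilde a_{u_0^{(k)}}(s)}\,ds+\frac{1}{4\pi}\|u_0^{(k)}\|_{L^2}^2$. Passing to the limit $k\to\infty$ exactly as in the proof of Lemma~\ref{intrelrootsnew}, $I_k$ converges to the integer $N:=\frac{1}{2i\pi}\int_0^{+\infty e^{i\theta_0}}\frac{\tilde a'_{u_0}(s)}{\tilde a_{u_0}(s)}\,ds+\frac{1}{4\pi}\|u_0\|_{L^2}^2$, which by the scaling identity equals the analogous integer attached to $U_n$ for every $n$. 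I would then evaluate $N$ through the profile decomposition: using Proposition~\ref{gendecomposcatcoef}, the asymptotic additivity of $\tr T^2$ under well-separated superpositions, and the fact that ${\rm r}_n$ contributes nothing (its counting integer vanishes because $\|{\rm r}_n\|_{L^4}\to 0$), one obtains $N=\sum_{\ell=1}^{L_0}\frac{\|V^{(\ell)}\|_{L^2}^2}{4\pi}\geq L_0\geq 1$, using $\tilde a_{V^{(\ell)}}\equiv 1$ and $\|V^{(\ell)}\|_{L^2}^2\geq 4\pi$. Hence $\liminf_k I_k=N\geq 1$.

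For \eqref{propzerosfinalbis}: since $\theta_0>\pi/2$, every zero $\zeta_{n,j}^{(k)}$ in the sector satisfies $\re\zeta_{n,j}^{(k)}<0$, so $c_0\leq 0$; the real content is that $c_0$ is strictly negative, equivalently that the sector-zeros do not all collapse to the origin. I would prove this by contradiction. If, along a subsequence, $\re\zeta_{n,1}^{(k)}\to 0$, then — the sector-zeros being $\lesssim\|u_0\|_{L^2}^2$ in number by \eqref{controbound} and of bounded imaginary part — all of them tend to $0$; inserting the factorisation \eqref{devol} of $\tilde a_{U_n^{(k)}}$ into $\varphi_{U_n^{(k)}}(\rho)=\im\ln\tilde a_{U_n^{(k)}}(i\rho)$ then shows that their joint contribution to $\varphi_{U_n^{(k)}}$ on a fixed interval $[R,R']\subset(0,\infty)$ is negligible. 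Combined with Corollary~\ref{coruse1} (in which only the vanishing $\varphi_{V^{(\ell)}}$ and the non-negative $\varphi_{0,{\rm r}_n^{L_0}}$ appear) and Lemma~\ref{studyfirstpartimaginary}, this information, fed back into the counting identity \eqref{controlpert}, forces $I_k\to 0$, contradicting \eqref{number}. Therefore $c_0<0$; the B\"acklund transformation then enters to convert this liminf statement into an actual zero $z_n$ of $a_{U_n}$ with $\re z_n^2$ converging to $c_0$, by stripping off the remaining zeros without disturbing $b_u$ (cf.\ \eqref{backtransfrela} and \eqref{SLinfty}--\eqref{derG}), which is what the last step of the proof of Theorem~\ref{Mainth} requires.

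The main obstacle is the strict negativity and finiteness of $c_0$. A naive Hurwitz argument is misleading here: $\tilde a_{U_n^{(k)}}\to\tilde a_{U_n}$ only locally uniformly inside $\C_+$, and the convergence degenerates along $\arg\zeta=\pi$, where the stability estimate \eqref{eqstab} blows up; it is exactly there that the zeros produced by the regularization live, and one must show they neither drift to $0$, nor escape to $\infty$, nor leave the sector. Controlling this requires using that the profiles are algebraic solitons carrying a ``virtual'' zero at $-c_\ell/4<0$ — rather than merely small remainders — together with the $n$-independence of $I_k$ and the $L^2$ trace formulas. It is here that the description of $\mathcal A$ in Remark~\ref{remm} and the B\"acklund transformation become indispensable, and delicately balancing the two limits $k\to\infty$ (regularization) and $n\to\infty$ (blow-up scaling) is the crux of the whole section.
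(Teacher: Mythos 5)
Your treatment of \eqref{number} follows the paper's route in all essentials: you bound $I_k$ from below by comparing the renormalized counting quantities of Lemmas \ref{intrel} and \ref{intrelrootsnew} for $U_n^{(k)}$ and for the remainder, and you credit each profile $V^{(\ell)}\in\mathcal{A}$ with $\|V^{(\ell)}\|_{L^2(\R)}^2/(4\pi)\geq 1$. Two caveats. First, the splitting of the counting integral along the \emph{whole} ray $e^{i\theta_0}\R_+$ cannot be justified by Proposition \ref{gendecomposcatcoef} alone, which is uniform only on $\{\im \lambda^2\geq\delta\}$; you need the full strength of Proposition \ref{preturbationspectrumgen} (together with the lower bound \eqref{estnew}), whose two lemmas are designed precisely to control the regimes $\lambda\to 0$ and $|\lambda|\gtrsim 1$ on the ray and to guarantee that $\tilde a_{r_n^{(k)}}$ does not vanish there. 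Second, you neither need nor can easily prove that the remainder's counting integer ``vanishes''; Lemma \ref{intrelrootsnew} only gives that it is a non-negative integer, and that inequality is all the argument requires.

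The genuine gap is in \eqref{propzerosfinalbis}. Your proposed contradiction --- that the collapse of the sector-zeros to the origin, inserted into \eqref{devol} and combined with Corollary \ref{coruse1}, ``forces $I_k\to 0$'' through the counting identity \eqref{controlpert} --- does not work. The counting identity is blind to the position of the zeros inside the sector: it returns $I_k$ whatever their location, and the vanishing of $\varphi_{U_n^{(k)}}$ on compact subsets of $i\R_+$ away from the origin (which is already known from \eqref{cor3}) is perfectly compatible with $I_k\geq 1$ zeros accumulating at $\zeta=0$; this is exactly the configuration that must be excluded, and no rereading of \eqref{controlpert} excludes it. The paper's mechanism is different and is the actual content of this step: one strips off the $I_k$ collapsing zeros by iterating the B\"acklund transformation, uses \eqref{SLinfty}, \eqref{derG} and the mass relation \eqref{mass2} to show that, precisely because $\im\lambda^{(k)}_{n,j}\to 0$, the transformed potential $\cW_n^{(k)}$ is again of the form $\sum_\ell\beta^{(k)}_{n,\ell}V^{(\ell)}(\cdot-y_n^{(\ell)})+\cR_n^{(k)}$ with unimodular $\beta^{(k)}_{n,\ell}$ and $\cR_n^{(k)}$ small in $L^p$ for $p>2$, and then reapplies the counting lower bound of the first part to $\cW_n^{(k)}$, whose $\tilde a$ has no zeros in the sector by construction, reaching $0\geq 1$. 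In your write-up the B\"acklund transformation appears only as an afterthought and for the wrong purpose: it is not used to manufacture a zero of $a_{U_n}$ itself (the endgame of Theorem \ref{Mainth} works directly with the zeros of $a_{U_n^{(k)}}$ via the scaling \eqref{sca} and Lemma \ref{lemmcont} applied to $u_0^{(k)}$). Without the removal-plus-recount argument, the strict negativity of $c_0$ is not established.
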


 \medbreak
 Admitting  for a while  Lemma\refer{claimresult2fin}, let us achieve the proof of Theorem\refer{Mainth}.  
 Accordingly to the lemma, for all $k$ sufficiently large and all $n\in\N$,  the function  $\tilde a_{U_n^{(k)}}$ has at least one zero in~$\{\zeta\in \C: \, \theta_0\leq \arg \zeta<\pi\}$. 
   By\refeq{sca},    for all  $j=1,\cdots, I_{k}$,     
   $$\tilde a_{u^{(k)}_0}(\zeta^{(k)}_{n, j}   \mu_n)= 0  \,  . $$
In view of Lemma\refer{lemmcont}, this implies that for all $k$ sufficiently large and all $n$,
$$\re \zeta^{(k)}_{n, 1} \geq  - 
\frac{C}{\mu_n}  \virgp $$ 
for  some positive constant $C$ depending only  on $\|u_0\|_{H^{\frac 1 2} (\R)}$.
Since 
$\mu_n  \stackrel{n \to +\infty}\longrightarrow +\infty$, this contradicts the property
\eqref{propzerosfinalbis}.

 \bigskip Thus, to complete the proof of  the theorem, we need to establish Lemma\refer{claimresult2fin}.  To this end, let us start by  observing  that in view of Theorem \ref{result1newth}, we have 
\begin{equation}
\label{decbc}  U_n^{(k)}(y)= \sum_{\ell=1}^{L_0}    V^{(\ell)}(y-y^{(\ell)}_n)  +  {
r}^{(k)}_n(y), 
\quad {
r}^{(k)}_n= {
\rm r}_n+ U_n^{(k)}-U_n , \end{equation}
with
\begin{equation}
\label{decbc1}
\min\limits_{\ell\neq\ell'}|y^{(\ell)}_n-y^{(\ell')}_n| \stackrel{n \to +\infty}\longrightarrow +\infty\quad {\rm and}\quad
\lim\limits_{k\rightarrow +\infty} \|r^{(k)}_n\|_{L^p(\R)}\stackrel{n \to +\infty}\longrightarrow 0\, ,\end{equation}  
for all $2<p<\infty$.

 \smallskip Combining  \refeq{invscalnew} with Corollary \ref{coruse}
  and taking into account the stability estimate\refeq{eqstab} and the scaling property  \refeq{sca}, we infer that there exist $C\geq 1$ 
   and $K\in \N$ such that for all $k\geq K$, all $n\in \N$ and all $\zeta\in \C_+$
 with $\arg \zeta=\theta_0$ we have
\begin{equation}
\label{estnew}   \frac1C \leq \Big|\frac  1 {\tilde  a_{U^{(k)}_n}(\zeta)}  \Big| \leq  C. 
\end{equation}
Invoking Proposition\refer{preturbationspectrumgen}, we deduce   that 
\begin{equation}
\label{justifnew}    
\limsup\limits_{k\rightarrow +\infty}\sup\limits_{{\zeta\in \C_+\atop \arg \zeta=\theta_0}}
 \Big|1- \frac  {\tilde   a_{{
r}^{(k)}_n}(\zeta)} {\tilde  a_{U^{(k)}_n}(\zeta)}  \Big| \stackrel{n \to +\infty}\longrightarrow 0
\, \cdotp \end{equation}
  It follows then that, for all $n$ and $k$ large enough, $\tilde  a_{r_n^{(k)}}$ does not vanish on the ray $e^{i\theta_0}\R_+^*$
  and therefore, we can apply
   Lemmas \refer{intrel}, \ref{intrelrootsnew}, which gives
$$I_{k}\geq \frac{1}{2i\pi }\int\limits^{+ \infty \, e^{i\theta_0}}_0 \left(\frac {\tilde a'_{U_n^{(k)}}(s)}  {\tilde  a_{U_n^{(k)}}(s)} -
\frac {\tilde a'_{r_n^{(k)}}(s)}  {\tilde  a_{r_n^{(k)}}(s)}\right)
ds+\frac1{4\pi}\left({\|U_n^{(k)}\|^ 2_{L^2(\R)}}-\|r_n^{(k)}\|^ 2_{L^2(\R)}\right).
$$
In view of \eqref{ortogonalth} and \eqref{justifnew}, we have respectively
$$\lim\limits_{k\rightarrow +\infty}\left({\|U_n^{(k)}\|^ 2_{L^2(\R)}}-\|r_n^{(k)}\|^ 2_{L^2(\R)}\right)\stackrel{n \to +\infty}\longrightarrow
 \sum_{\ell=1}^{L_0} \|V^{(\ell)}\|_{L^2(\R)}^2,$$
 and $$\limsup_{k\rightarrow+\infty}\left|\int^{+ \infty \, e^{i\theta_0}}_0 \left(\frac {\tilde a'_{U_n^{(k)}}(s)}  {\tilde  a_{U_n^{(k)}}(s)} -
\frac {\tilde a'_{r_n^{(k)}}(s)}  {\tilde  a_{r_n^{(k)}}(s)}\right)ds\right| \stackrel{n \to +\infty}\longrightarrow 0,$$
which shows that
 $$\liminf\limits_{k\rightarrow \infty}I_{k}\geq  \frac1{4\pi}\sum_{\ell=1}^{L_0} \|V^{(\ell)}\|_{L^2(\R)}^2\geq 1.$$

 \medskip  To establish   \eqref{propzerosfinalbis}, we argue by contradiction assuming that
 \begin{equation}
 \label{propzerosfinalbis1}
 \liminf_{n \to  \infty} \liminf_{k \to  \infty} \re \zeta_{n, 1}^{(k)}\geq 0.
 \end{equation}
 Since $\ds \frac\pi2<\theta_0<\pi$, this means that  \begin{equation}
 \label{propzerosfinalbis1}
 \lim_{n \to  \infty} \limsup_{k \to  \infty}\max\limits_{j=1, \dots I_{k}} |\zeta_{n, j}^{(k)}|=0.
 \end{equation}
 Observe also that since $\tilde a_u$ does not vanish in the angle $\{\zeta\in \C: \, \theta_0\leq \arg \zeta<\pi\}$, it follows from  the stability estimate \eqref{eqstab} and  Corollary \ref{coruse} that 
   \begin{equation} \label{estzeros2}  \max\limits_{j=1, \dots, I_k}\frac {\im \zeta^{(k)}_{n, j} }{|\re \zeta_{n,j}^{(k)}|} \stackrel{k\to\infty}\longrightarrow 0 \, \cdotp\end{equation}

 \bigskip 
 We shall now eliminate one by one all the zeros $\zeta_{n, j}^{(k)}$ by applying  recursively the B\"acklund transformation
 that we have introduced  in Paragraph\refer{defBacklund transformation}.  Namely, consider
   the family of Schwartz class functions $(U^{(k)}_{n, j})_{j=0, \cdots,I_{k}}$   defined by 
 \begin{eqnarray*} U^{(k)}_{n, 0} &=&  U^{(k)}_{n}\, , \\  U^{(k)}_{n, j}&=& \cB_{\lam_{n, j}^{(k)}} (\eta_{n, j}^{(k)}) U_{n, j-1}^{(k)} \, , \, \, j=1,\cdots, I_{k} \, ,\end{eqnarray*} 
 or explicitly, 
 $$
  U^{(k)}_{n, j}=G^2_{\lam_{n, j}^{(k)}} (\eta_{n,j}^{(k)})U^{(k)}_{n, j-1}-G_{\lam_{n, j}^{(k)}} (\eta_{n,j}^{(k)})S_{\lam_{n, j}^{(k)}} (\eta_{n,j}^{(k)}),\quad j=1,\cdots, I_{k},$$
 where 
 $\lam_{n, j}^{(k)}=\sqrt{\zeta_{n, j}^{(k)}}\in \C_{++}$, and $\eta_{n, j}^{(k)}\in L^2(\R, \C^2)\setminus\{0\}$, 
 $L_{U_{n, j-1}^{(k)}}(\lam_{n, j}^{(k)})\eta_{n, j}^{(k)}=0$.  It then follows from\refeq{mass1} and  \eqref{backtransfrela}     that 
 \begin{equation}\label{mass2}
 \|U^{(k)}_{n, j}\|_{L^2(\R)}^2= \|U^{(k)}_{n}\|_{L^2(\R)}^2- 4 \sum^j_{i=1}  { \arg} (\zeta_{n, i}^{(k)}) \, 
  \end{equation}
 (see also Appendix \ref{proofderG}, Remark \ref{remap}).
 Since $|G_{\lam_{n, j}^{(k)}} (\eta_{n,j}^{(k)})|=1$,  the above relation ensures that
 there exists a positive constant $C$ such that for all~$n, k, j$,  there holds  
 \begin{equation}\label{estS}
 \|S_{\lam_{n, j}^{(k)}} (\eta_{n,j}^{(k)})\|_{L^2(\R)}\leq C.\end{equation}
 
 Combining this bound with the estimate\refeq{SLinfty},  we readily gather by an obvious induction that,  for all $2< p<  \infty$, $$ \|U^{(k)}_{n, j}\|_{L^p(\R))}\leq  \|U^{(k)}_{n}\|_{L^p(\R))}+C  \sum^j_{i=1} \big(\im\lambda_{n, i}^{(k)}) \big)^{1-\frac 2 p}\, .$$
 Let us now consider the functions $\cW^{(k)}_{n} \eqdefa U^{(k)}_{n, I_{k}}$. Clearly,
  \begin{equation} \begin{aligned}
\label{newdef} \cW^{(k)}_{n} & =  G^2_{\lam_{n, I_{k}}^{(k)}} (\eta_{n, I_{k}}^{(k)}) \cdots G^2_{\lam_{n, 1}^{(k)}} (\eta_{n,1}^{(k)})U^{(k)}_{n}
\\ &  \qquad \qquad  \qquad \qquad - \sum^{I_{k}} _{j=1} G^2_{\lam_{n, {I_k}}^{(k)}} (\eta_{n, I_k}^{(k)}) 
\cdots G^2_{\lam_{n, j+1}^{(k)}} (\eta_{n,j+1}^{(k)})G_{\lam_{n, j}^{(k)}}(\eta_{n,j}^{(k)})\cS_{\lam_{n, j}^{(k)}}  (\eta_{n,j}^{(k)}). \end{aligned}
\end{equation}
We claim that there exists a family   $(\beta^{(k)}_{n, \ell})_{1\leq  \ell \leq L_0}$ of complex numbers of modulus $1$ such that,  for all $2<p< \infty$,  there holds\begin{equation}
\label{newdec}  \cW^{(k)}_{n}= \sum_{\ell=1}^{L_0}   \beta^{(k)}_{n, \ell} V^{(\ell)}(\cdot-y^{(\ell)}_n) +\cR^{(k)}_{n}\, , \quad   \limsup_{k \to  \infty} \|\cR^{(k)}_{n}\|_{L^p(\R)}\stackrel{n \to  + \infty} \longrightarrow 0\, .\end{equation}
Indeed, setting  $$\beta^{(k)}_{n, \ell}= \underbrace{\big(G^2_{\lam_{n, I_{k}}^{(k)}} (\eta_{n, I_{k}}^{(k)}) \cdots G^2_{\lam_{n, 1}^{(k)}} (\eta_{n,1}^{(k)})\big)}_{\cG^{(k)}_{n}}(y^{(\ell)}_n)\, ,$$
we obviously obtain a family of complex numbers of modulus $1$. Then, taking advantage of\refeq{decbc}, we deduce that   
$$\cR^{(k)}_{n}=\cR^{(k)}_{n, 1}+\cR^{(k)}_{n, 2}+\cR^{(k)}_{n, 3} , $$
with
\begin{eqnarray*} \cR^{(k)}_{n, 1}(y) &=& \cG^{(k)}_{n}(y) \, {
r}^{(k)}_n(y)\, , \\ \cR^{(k)}_{n, 2}(y) &=& - \sum^{I_{k}} _{j=1} G^2_{\lam_{n, {I_k}}^{(k)}} (\eta_{n, I_k}^{(k)}) 
\cdots G^2_{\lam_{n, j+1}^{(k)}} (\eta_{n,j+1}^{(k)})G_{\lam_{n, j}^{(k)}}(\eta_{n,j}^{(k)})\cS_{\lam_{n, j}^{(k)}}  (\eta_{n,j}^{(k)})
\\ \cR^{(k)}_{n, 3}(y) &=& \sum^{L_0} _{\ell=1} (\cG^{(k)}_{n}(y)-\cG^{(k)}_{n}(y^{(\ell)}_n))V^{(\ell)}(y-y^{(\ell)}_n)\,   . \end{eqnarray*}
Since $|\cG^{(k)}_{n}|=1$,  \refeq{decbc1} ensures that,
for all $2<p< \infty$,  $$\ds  \lim_{k \to  \infty} \|\cR^{(k)}_{n, 1}\|_{L^{p}(\R)}\stackrel{n \to  + \infty} \longrightarrow  0\,  . $$
For $R^{(k)}_{n, 2}$, we have
$$\|\cR^{(k)}_{n, 2}\|_{L^p(\R)}\leq \sum^{I_{k}} _{j=1} \|\cS_{\lam_{n, j}^{(k)}}  (\eta_{n,j}^{(k)})\|_{L^p(\R)}\lesssim   I_{k} \max_{1 \leq j \leq I_{k}} \big(\im \lam^{(k)}_{n, j}\big)^{1-\frac 2 p}\,  . $$
Note that by \eqref{controbound},  $I_{k}$ is bounded independently of $k$.
Therefore, taking into account \eqref{propzerosfinalbis1}, we deduce from the above estimate
 that $$\ds  \limsup_{k \to  \infty} \|\cR^{(k)}_{n, 2}\|_{L^p(\R)}\stackrel{n \to  + \infty} \longrightarrow 0\,  . $$
Finally, in order to investigate  the term $\cR^{(k)}_{n, 3}$, we shall make use of the pointwise estimate\refeq{derG} which gives:  \begin{equation*} 
\big |\frac d {dy}\cG^{(k)}_{n} (y)\big| \leq 16\sum^{I_{k}} _{j=1}\Big( (\im \lam^{(k)}_{n, j})^2+ \im( \lam^{(k)}_{n, j})|U^{(k)}_{n, j-1} (y)|\Big)   \, .\end{equation*}
Clearly, for all $j$,  we have
$$|U^{(k)}_{n, j} (y)|\leq |U^{(k)}_{n} (y)|+ \sum^{I_{k}} _{j=1}\big|\cS_{\lam_{n, j}^{(k)}}  (\eta_{n,j}^{(k)}(y))\big|\leq  |U^{(k)}_{n} (y)|+ 4\sum^{I_{k}} _{j=1}\im (\lam_{n, j}^{(k)}).$$
Therefore, combining the two last inequalities, we obtain: 
\begin{equation} \label{pointwiseest}
\big |\frac d {dy}\cG^{(k)}_{n} (y)\big| \lesssim  \Big(\sum^{I_{k}} _{j=1} |\lam^{(k)}_{n, j}|\Big)^2+ |U^{(k)}_{n} (y)|\sum^{I_{k}} _{j=1} |\lam^{(k)}_{n, j}|  \, ,\end{equation}
which implies that, for all $1\leq  \ell \leq L_0$, 
$$ \big|\cG^{(k)}_{n}(y+y_n^{(\ell)})-\cG^{(k)}_{n}(y^{(\ell)}_n) \big| \lesssim  \Big(\sum^{I_{k}} _{j=1} |\lam^{(k)}_{n, j}|\Big)^2 \, |y| + \sum^{I_{k}} _{j=1} |\lam^{(k)}_{n, j}| \, |y|^{\frac 1 2} \|U^{(k)}_{n}\|_{L^2(\R)}\, .$$
In view of \eqref{propzerosfinalbis1}, this  ensures that, for all $1\leq  \ell \leq L_0$  and all $R>0$, $$\limsup\limits_{k\rightarrow +\infty} \sup\limits_{y\in [-R, R]}\big|\cG^{(k)}_{n}(y+y_n^{(\ell)})-\cG^{(k)}_{n}(y^{(\ell)}_n) \big| \stackrel{n \to  + \infty} \longrightarrow  0.$$
Since $\big |G^{(k)}_{n} (y)\big|= 1$, this allows us to conclude that
$$\limsup_{k \to  \infty} \|\cR^{(k)}_{n, 3}\|_{L^p(\R)}\stackrel{n \to  + \infty} \longrightarrow 0\, ,$$
for all $2\leq p<\infty$,  which ends the proof of\refeq{newdec}. 

\medskip Let us now consider  $\tilde a_{\cW^{(k)}_{n}} $. By \eqref{backtransfrela}, it has the following form 
\begin{equation}
\label{est010}   
\tilde a_{\cW^{(k)}_{n}}(\zeta)=\tilde a_{U^{(k)}_{n}}(\zeta)\prod\limits_{j=1}^{I_{k}}\frac{\zeta-\overline\zeta^{(k)}_{n, j}}
{\zeta-\zeta^{(k)}_{n, j}}\,. \end{equation}
Due to \eqref{estzeros2}, we have  
\begin{equation}
\label{est10}    
\sup\limits_{{\zeta\in \C_+\atop \arg \zeta=\theta_0}}
 \Big|1- \prod\limits_{j=1}^{I_{k}}\frac{\zeta-\overline\zeta^{(k)}_{n, j}}
{\zeta-\zeta^{(k)}_{n, j}}\Big|\stackrel{k \to  + \infty} \longrightarrow 0,
  \end{equation}
which in view of  \eqref{est010} and \eqref{estnew} implies that for all $k$ sufficiently large, all $n\in \N$ and all $\zeta\in \C_+$
 with $\arg \zeta=\theta_0$,
we have
\begin{equation}
\label{estnew1}   \frac1{2C} \leq \Big|\frac  1 {\tilde a_{\cW_n^{(k)}}(\zeta)}  \Big| \leq  2C. 
\end{equation}
This bound together with \eqref{newdec} and \eqref{decbc1} allows us to apply Proposition\refer{preturbationspectrumgen} and
Lemmas \refer{intrel}, \ref{intrelrootsnew} to the sequence $(\cW^{(k)}_{n})$,
repeating the argument we have used above to prove
 \eqref{number}. Taking into account that by construction, the function $\tilde a_{\cW_n^{(k)}}(\zeta) $ has no zero in the angle
$\{\zeta\in \C_+: \,\, \theta_0 \leq \arg \zeta < \pi  \big\}$, we obtain
 $$0\geq 1+ \frac{1}{2i\pi }\int\limits^{+ \infty \, e^{i\theta_0}}_0 \left(\frac {\tilde a'_{\cW_n^{(k)}}(s)}  {\tilde  a_{\cW_n^{(k)}}(s)} -
\frac {\tilde a'_{\cR_n^{(k)}}(s)}  {\tilde  a_{\cR_n^{(k)}}(s)}\right)
ds+\frac1{4\pi}\left({\|\cW_n^{(k)}\|^ 2_{L^2(\R)}}- \sum_{\ell=1}^{L_0} \|V^{(\ell)}\|_{L^2(\R)}^2-\|\cR_n^{(k)}\|^ 2_{L^2(\R)}\right),
$$
with
$$\limsup_{k\rightarrow+\infty}\left|\int^{+ \infty \, e^{i\theta_0}}_0 \left(\frac {\tilde a'_{\cW_n^{(k)}}(s)}  {\tilde  a_{\cW_n^{(k)}}(s)} -
\frac {\tilde a'_{\cR_n^{(k)}}(s)}  {\tilde  a_{\cR_n^{(k)}}(s)}\right)ds\right| \stackrel{n \to +\infty}\longrightarrow 0,$$
and
$$\limsup_{k\rightarrow+\infty}\left|{\|\cW_n^{(k)}\|^ 2_{L^2(\R)}}- \sum_{\ell=1}^{L_0} \|V^{(\ell)}\|_{L^2(\R)}^2-\|\cR_n^{(k)}\|^ 2_{L^2(\R)}\right|\stackrel{n \to +\infty}\longrightarrow 0,$$
which gives a contradiction and therefore, concludes the proof of lemma.
 Thus   the proof of Theorem\refer{Mainth} is  achieved.

\appendix 

\section{Regularized Determinants}\label{basicdeterminants}
In this appendix, we review the basic properties of  the regularized determinants  ${\rm det}_n({\rm I}-A)$ for~$A$ in~$\mathscr{C}_n$,   the set of bounded operators\footnote{For our purpose, we focus here on   $L^2$-framework, but all the results are available on separable Hilbert spaces.}~$A$ on~$L^2(\R)$ such that   $|A|^n$ is of trace-class, endowed with the norm $\|A\|_n \eqdefa \big[{ \rm Tr}  \big(|A|^n\big)\big] ^{\frac 1 n}$. We refer to the monograph  of Simon\ccite{Simon} and the references therein for further details and the proofs.

\smallskip To introduce the  regularized determinants, let us  start by defining,  for any bounded operator~$A$ on $L^2(\R)$,
$$R_n (A)={\rm I}- ({\rm I}-A) \exp \Big(\sum^{n-1}_{k=1} \frac {A^k} k\Big)  \cdotp$$ 
Clearly, 
\begin{equation}
\label{RA0}
R_n(A)=A^nh_n(A),
\end{equation}
where $h_n$ is an entire function\footnote{Explicitly, $h_n(z)=z^{-n}\left(1-(1-z)e^{\sum^{n-1}_{k=1} \frac {z^k} k}\right)$.}on $\C$.
This shows  that $R_n (A)$ belongs to~$\mathscr{C}_1$ if $A$ is in~$\mathscr{C}_n$,
which justifies the following definition:
\begin{definition}  
\label{defdety}
{\sl For any  operator  $A$ in~$\mathscr{C}_n$, $n \geq 2$, we define 
\begin{equation} 
\label{defdetn}{\rm det}_n ({\rm I}-A) \eqdefa {\rm det}({\rm I}-R_n (A)) = {\rm det}\Big(({\rm I}-A) \exp \Big(\sum^{n-1}_{k=1} \frac {A^k} k\Big)\Big)\cdotp\end{equation}
}
\end{definition}
\begin{remark} 
\label{defdetyrk}
{\sl
The above formula deserves some comments:
\begin{itemize} 
   \item For any $n\geq 2$, if $A$ is in $\mathscr{C}_{n-1}$,  then\footnote{with the convention $\det_1(I-A)=\det(I-A)$}
\begin{equation} 
\label{dety1}{\rm det}_n ({\rm I}-A)= {\rm det}_{n-1} ({\rm I}-A)  \exp \Big(  \frac {{\rm Tr}(A^{n-1})} {n-1}\Big) \cdotp\end{equation} 
 \item Note also that, for all $A$ in $\mathscr{C}_{n}$ such that $\|A\|<1$ (or more generally $\|A^p\|<1$, for some~$p$), one has: \begin{equation} 
\label{dety2}{\rm det}_n ({\rm I}-A)=\exp \Big( - {\rm Tr} \sum^{\infty}_{k=n} \frac {A^k} k\Big) \cdotp\end{equation}
 \end{itemize} }
\end{remark} 

\medbreak

In the following proposition, we  summarize some useful properties of the regularized determinants:
\begin{proposition}
\label{defdetyrk}
{\sl With the previous notations, for any $n\geq 1$ there exists a positive constant~$C_n$ such that the following holds.
\begin{enumerate} 
\item  For all~$A \in \mathscr{C}_n$,
\begin{equation} 
\label{boundedest}\big|{\rm det}_n ({\rm I}-A)\big| \leq \exp \big(C_n \|A\|_n^n\big), \end{equation}   
\begin{equation} 
\label{detyest3}|{\rm det}_n ({\rm I}-A)- 1| \leq C_n\|A^n\|_1 \exp \big(C_n \|A\|_n ^n\big). \end{equation}
 \item For all~$A, B$ in $\mathscr{C}_n$, 
 \begin{equation} 
\label{detyest4}|{\rm det}_n ({\rm I}-A)- {\rm det}_n ({\rm I}-B))| \leq \|A-B\|_n \exp \big(  C_n \big(\|A\|_n+ \|B\|_n+  1\big)^n\big) .
\end{equation}
\item Let ~$A \in \mathscr{C}_n$. Then~${\rm I}-A$ is invertible if and only if ~${\rm det}_n ({\rm I}-A)\neq 0$, and furthermore, one has 
 \begin{equation} 
\label{detyest5}\| ({\rm I}-A)^{-1}\| \leq \frac {C_n} {|{\rm det}_n ({\rm I}-A)|} \exp\big(  C_n \ \|A\|^n_n\big). \end{equation}
 .\end{enumerate} }
\end{proposition}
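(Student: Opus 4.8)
These are the standard quantitative properties of regularized determinants (cf. Simon \ccite{Simon}); let me indicate the route. The unifying idea is to pass to the Fredholm determinant on $\mathscr{C}_1$ by means of \eqref{defdetn} together with the factorization \eqref{RA0}, $R_n(A)=A^nh_n(A)\in\mathscr{C}_1$, and to exploit the logarithmic derivative of $t\mapsto{\rm det}_n({\rm I}-tA)$. I would first obtain \eqref{boundedest} from the eigenvalue representation ${\rm det}_n({\rm I}-A)=\prod_j(1-\lambda_j)\exp\big(\sum_{k=1}^{n-1}\lambda_j^k/k\big)$, valid for every $A\in\mathscr{C}_n$ with $(\lambda_j)$ the eigenvalues of $A$ repeated according to algebraic multiplicity: this reduces it to the elementary complex inequality $\big|(1-z)\exp(\sum_{k=1}^{n-1}z^k/k)\big|\le\exp(c_n|z|^n)$ for all $z\in\C$ (the terms of order $<n$ in the Taylor expansion of the logarithm of the left-hand side cancel) combined with Weyl's majorant inequality $\sum_j|\lambda_j|^n\le\sum_j s_j(A)^n=\|A\|_n^n$. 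Applying the same bound to $-|A|$ in place of $A$ also yields ${\rm det}_n({\rm I}+|A|)\le\exp(C_n\|A\|_n^n)$, which I will use below.

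Next, for \eqref{detyest5} I would write ${\rm I}-R_n(A)=({\rm I}-A)\exp\big(\sum_{k=1}^{n-1}A^k/k\big)$; since the exponential factor is bounded with bounded inverse, ${\rm I}-A$ is invertible precisely when ${\rm I}-R_n(A)$ is, i.e., by the classical $\mathscr{C}_1$ theory, precisely when ${\rm det}({\rm I}-R_n(A))={\rm det}_n({\rm I}-A)\neq0$. For the quantitative bound the resolvent identity $({\rm I}-A)^{-1}=\exp\big(\sum_{k=1}^{n-1}A^k/k\big)({\rm I}-R_n(A))^{-1}$ reduces matters to bounding the exponential factor by $\exp\big(\sum_{k=1}^{n-1}\|A\|_n^k/k\big)\le C_n\exp(C_n\|A\|_n^n)$ (using $\|A\|_n^k\le1+\|A\|_n^n$ for $1\le k\le n-1$) and $({\rm I}-R_n(A))^{-1}$ by $|{\rm det}({\rm I}-R_n(A))|^{-1}{\rm det}({\rm I}+|R_n(A)|)$; alternatively one may simply quote Simon \ccite{Simon} for the estimate \eqref{detyest5}.

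Finally, \eqref{detyest3} and \eqref{detyest4} I would deduce from \eqref{detyest5} by differentiating in a parameter. The variation of the series defining $\log{\rm det}_n$ telescopes, so that, off the discrete zero set of $t\mapsto{\rm det}_n({\rm I}-tA)$ (and then everywhere by continuity),
\[
\frac{d}{dt}{\rm det}_n({\rm I}-tA)=-t^{n-1}{\rm det}_n({\rm I}-tA)\,{\rm Tr}\big[({\rm I}-tA)^{-1}A^n\big];
\]
using $|{\rm Tr}(XY)|\le\|X\|\,\|Y\|_1$ and \eqref{detyest5} this gives $\big|\frac{d}{dt}{\rm det}_n({\rm I}-tA)\big|\le C_n\|A^n\|_1\exp(C_n\|A\|_n^n)$, and integration over $t\in[0,1]$ yields \eqref{detyest3}. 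Parametrizing instead by $A_t=B+t(A-B)$ and telescoping in the same way produces $\frac{d}{dt}{\rm det}_n({\rm I}-A_t)=-{\rm det}_n({\rm I}-A_t)\,{\rm Tr}\big[({\rm I}-A_t)^{-1}A_t^{n-1}(A-B)\big]$, and Hölder in the Schatten scale ($|{\rm Tr}(XYZ)|\le\|X\|\,\|Y\|_{n/(n-1)}\|Z\|_n$ with $\|A_t^{n-1}\|_{n/(n-1)}\le\|A_t\|_n^{n-1}\le(\|A\|_n+\|B\|_n)^{n-1}$) together with \eqref{detyest5} and integration in $t$ yields \eqref{detyest4} after absorbing the polynomial prefactor into the exponential. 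The one point that requires care is the shape of the exponent, and it is precisely the reason for routing \eqref{detyest3} and \eqref{detyest4} through the logarithmic derivative rather than through the $\mathscr{C}_1$ Lipschitz bound applied to $R_n(A),R_n(B)$: a direct estimate forces $\|R_n(A)\|_1\le\|A^n\|_1\,\|h_n(A)\|$ with $\|h_n(A)\|\le H_n(\|A\|_n)$ for a merely entire $H_n$, producing a super-polynomial growth rate in the exponential, whereas in the argument above all exponential growth enters only through \eqref{detyest5} and comes out as $C_n\|A\|_n^n$ (resp. $C_n(\|A\|_n+\|B\|_n+1)^n$). Everything else — Weyl's majorant inequality, the scalar estimate for the Weierstrass elementary factors, Hölder's inequality in the Schatten scale, and the elementary $\mathscr{C}_1$ determinant theory — is entirely standard.
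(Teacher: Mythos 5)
The paper does not actually prove this proposition: Appendix~\ref{basicdeterminants} states it and refers to Simon's monograph\ccite{Simon} for the proofs, so there is no in-paper argument to compare yours against. Your proposal reconstructs the standard proofs, and its architecture is sound: the eigenvalue product representation combined with the scalar bound on the Weierstrass primary factor and Weyl's majorant inequality is exactly how \eqref{boundedest} is obtained in\ccite{Simon}; and deriving \eqref{detyest3} and \eqref{detyest4} by integrating the logarithmic derivative along $t\mapsto tA$, respectively $t\mapsto B+t(A-B)$, with the trace controlled by Schatten--H\"older and the resolvent by \eqref{detyest5}, is correct (your two differentiation formulas check out, the continuity argument across the discrete zero set of $t\mapsto{\rm det}_n({\rm I}-tA)$ is legitimate, and the degenerate case $n=1$ causes no trouble). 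This is indeed the right way to make the exponent come out as $C_n\|A\|_n^n$ rather than something super-polynomial.

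The one step I would not accept as written is the quantitative part of \eqref{detyest5}. Bounding $\|({\rm I}-R_n(A))^{-1}\|$ by $|\det({\rm I}-R_n(A))|^{-1}\det({\rm I}+|R_n(A)|)$ costs a factor $\exp(\|R_n(A)\|_1)$, and $\|R_n(A)\|_1$ is \emph{not} $O(\|A\|_n^n)$: already for $A=tP$ with $P$ a rank-one projection one finds $\|R_2(A)\|_1=|1-(1-t)e^{t}|\sim t e^{t}$, and more generally $\|R_n(A)\|_1$ can be of size $\|A\|_n^n\exp(c_n\|A\|_n^{n-1})$, so this route produces a doubly exponential bound instead of $\exp(C_n\|A\|_n^n)$. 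Since your proofs of \eqref{detyest3} and \eqref{detyest4} deliberately funnel all exponential growth through \eqref{detyest5} --- the very point you emphasize at the end --- the claimed shape of those two estimates is only obtained once \eqref{detyest5} itself is established with exponent $C_n\|A\|_n^n$; in practice that means either quoting Simon for it (the fallback you offer, and what the paper does for the entire proposition) or reproving it via finite-rank approximation and the subdeterminant representation of $({\rm I}-A)^{-1}\,{\rm det}_n({\rm I}-A)$. Everything else in the proposal is correct.
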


\medbreak

We conclude  this appendix by  the following result that we have used in the proof of Theorem\refer{Mainth}.  
\begin{proposition} 
\label{propresproduct}
{\sl  For any $m\geq 0$,  there exists a  positive constant $C_m$ such that for all $A, B$ in~$\mathscr{C}_{2}$ satisfying  $\|A\|_2+ \|B\|_2 \leq m$ we have 
\begin{equation} \label{inter1}
|{\rm det}_4(I-A-B)-{\rm det}_4(I-A){\rm det}_4(I-B)|\leq C_m(\|AB\|+\|BA\|),
\end{equation}
and
\begin{equation} 
\label{detyest6}|{\rm det}_4 ({\rm I}-A-B)- {\rm det}_4 ({\rm I}-B)| \leq C_m \big(\|A^2\|+ \|AB\|\big).\end{equation}

}
\end{proposition}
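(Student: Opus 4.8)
We sketch how the proposition would be proved. The plan is to reduce both inequalities to the level of ordinary Fredholm determinants and then to a handful of elementary trace-norm manipulations. Since $A,B\in\mathscr{C}_2$, their squares and $(A+B)^2$ lie in $\mathscr{C}_1$, and by \eqref{RA0} and \eqref{defdetn} one has $\det_4({\rm I}-M)=\det({\rm I}-R_4(M))$ with $R_4(M)=M^4h_4(M)\in\mathscr{C}_1$ for every $M\in\mathscr{C}_2$, $h_4$ being entire. Writing the Taylor expansion $R_4(z)=\sum_{k\ge4}\gamma_kz^k$ (which has infinite radius of convergence, so $\sum_{k\ge4}|\gamma_k|r^k<\infty$ for every $r$), and using that $\mathscr{C}_1$ is a two-sided ideal, that $\|XY\|_1\le\|X\|\,\|Y\|_1$ (operator norm of $X$ times trace norm of $Y$), and that $\|X^j\|_2,\|X^j\|_1\le m^j$ for $X\in\{A,B,A+B\}$ and $j\ge2$, one gets the uniform bounds $\|R_4(M)\|_1,\ \|M^3h_4(M)\|_1,\ \|M^2h_4(M)\|_1\le C_m$ whenever $\|M\|_2\le m$. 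It then suffices, in each case, to estimate the trace norm of the relevant difference of $\mathscr{C}_1$ operators and to invoke the Lipschitz bound \eqref{detyest4} with $n=1$ — for which the exponential prefactor is $\le e^{C_m}$ by the above.

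For \eqref{inter1} I would write $\det_4({\rm I}-A)\det_4({\rm I}-B)=\det\bigl(({\rm I}-R_4(A))({\rm I}-R_4(B))\bigr)=\det\bigl({\rm I}-R_4(A)-R_4(B)+R_4(A)R_4(B)\bigr)$, so everything reduces to proving $\|R_4(A+B)-R_4(A)-R_4(B)+R_4(A)R_4(B)\|_1\le C_m(\|AB\|+\|BA\|)$. Now $R_4(A+B)-R_4(A)-R_4(B)=\sum_{k\ge4}\gamma_k\sum_w w$, the inner sum running over the $\le 2^k$ ordered products $w$ of $k$ factors in $\{A,B\}$ that involve \emph{both} letters; any such $w$ contains a consecutive pair $Z\in\{AB,BA\}$, and writing $w=PZQ$ with $|P|+|Q|=k-2\ge2$ one bounds $\|w\|_1\le m^{k-2}\|Z\|$ (using $\|PZQ\|_1\le\|P\|_2\|Z\|\,\|Q\|_2$ when both $P,Q$ are nonempty, and $\|P\|_1\|Z\|$ or $\|Z\|\,\|Q\|_1$ when one of them is empty). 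Summing over $w$ and $k$ gives $C_m(\|AB\|+\|BA\|)$. For the product term, $R_4(A)R_4(B)=\sum_{k,j\ge4}\gamma_k\gamma_jA^kB^j=\sum_{k,j\ge4}\gamma_k\gamma_j A^{k-1}(AB)B^{j-1}$, hence $\|R_4(A)R_4(B)\|_1\le\sum_{k,j\ge4}|\gamma_k\gamma_j|\,m^{k-1}\|AB\|\,m^{j-1}\le C_m\|AB\|$. Assembling these gives \eqref{inter1}.

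For \eqref{detyest6} this crude strategy \emph{fails}, and that is the one delicate point: $(A+B)^k-B^k$ contains the word $B^{k-1}A$, whose trace norm is controlled only by $\|BA\|$, which is not allowed on the right-hand side of \eqref{detyest6}. The remedy is to isolate exactly those terms. Set $\mathcal H(B):=\sum_{k\ge4}\gamma_kB^{k-1}=B^3h_4(B)\in\mathscr{C}_1$ and write $R_4(A+B)-R_4(B)=\mathcal H(B)\,A+G$, where $G=\sum_{k\ge4}\gamma_k\sum_w w$ with $w$ ranging over the words of length $k$ that contain at least one $A$ and are not $B^{k-1}A$. Every such $w$ contains a consecutive pair $Z\in\{A^2,AB\}$ — a word with $\ge2$ letters $A$ and no $A^2$ must have some $A$ immediately followed by $B$, while a word with exactly one $A$ which is not $B^{k-1}A$ has that $A$ followed by $B$ — so the splitting argument of the previous paragraph gives $\|G\|_1\le C_m(\|A^2\|+\|AB\|)$. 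The key observation is that $h_4(B)$ commutes with $B$, whence
$$R_4(B)+\mathcal H(B)A=B^4h_4(B)+B^3h_4(B)A=B^3h_4(B)\,(A+B),$$
so by cyclicity of the Fredholm determinant (legitimate since $B^3h_4(B)\in\mathscr{C}_1$),
$$\det\bigl({\rm I}-[R_4(B)+\mathcal H(B)A]\bigr)=\det\bigl({\rm I}-(A+B)B^3h_4(B)\bigr)=\det\bigl({\rm I}-R_4(B)-A B^3h_4(B)\bigr),$$
with $\|AB^3h_4(B)\|_1=\|(AB)\,B^2h_4(B)\|_1\le\|AB\|\,\|B^2h_4(B)\|_1\le C_m\|AB\|$. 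Applying \eqref{detyest4} with $n=1$ twice — first to pass from $R_4(A+B)$ to $R_4(B)+\mathcal H(B)A$ (error $\lesssim_m\|G\|_1$), then from $R_4(B)+\mathcal H(B)A$ to $R_4(B)$ along the displayed identity (error $\lesssim_m\|AB^3h_4(B)\|_1$) — and using the triangle inequality yields $|\det_4({\rm I}-A-B)-\det_4({\rm I}-B)|\le C_m(\|A^2\|+\|AB\|)$, i.e.\ \eqref{detyest6}.

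The only genuine obstacle is the one just described: the naive term-by-term estimate inserts a forbidden $\|BA\|$ into \eqref{detyest6}, and its removal rests on the commutation $Bh_4(B)=h_4(B)B$ together with cyclicity of $\det$, which relocate the stray factor $A$ to the front where it combines with $B^3$ into the admissible factor $AB$. Everything else is routine bookkeeping with the ideal property of $\mathscr{C}_1$ and the entirety of $h_4$.
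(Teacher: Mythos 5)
Your proof is correct and follows essentially the same route as the paper's: trace-norm estimates on $R_4$ combined with the Lipschitz bound \eqref{detyest4} (with $n=1$) and the multiplicativity $\det({\rm I}+C)\det({\rm I}+D)=\det({\rm I}+C+D+CD)$ for \eqref{inter1}, and for \eqref{detyest6} the isolation of the dangerous term $f(B)A=B^3h_4(B)A$ together with the cyclicity $\det({\rm I}-XY)=\det({\rm I}-YX)$ to trade it for $Af(B)=(AB)B^2h_4(B)$. The only cosmetic difference is in how that last trade is packaged: you cycle via the factorization $R_4(B)+B^3h_4(B)A=B^3h_4(B)(A+B)$, whereas the paper factors the determinant as $\det({\rm I}-R_4(B))\det({\rm I}-f(B)A)$ and shows the second factor equals $1+O(\|AB\|)$.
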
 
\begin{proof}  
In view of  the definition\refeq{defdety}, we have
$$\|R_4(A+B)\|_1+\|R_4(A)\|_1+\|R_4(B)\|_1\leq C_m,$$
and
$$ \|R_4(A+B)-R_4(A)-R_4(B)+R_4(A)R_4(B)\|_1\leq C_m\big(\|AB\|+\|BA\|\big).$$
Combining these inequalities with the bound \eqref{detyest4} and invoking the identity
$$
\det(I+C)\det(I+D)=\det(I+C+D+CD), \quad \forall \, C, D\in \mathscr{C}_{1},
$$
we get \eqref{inter1}.

\medskip To prove Estimate \eqref{detyest6}, we start by observing that $R_4(A+B)$ can be written in the form
$$R_4(A+B)=R_4(B)+f(B)A+\mathcal{G}(A,B), $$
where 
$f(z)= z^3 h_4(z)$ and the remainder $\mathcal{G}(A,B)$ admits the bound:
$$\|\mathcal{G}(A,B)\|_1\leq C_m \big(\|A^2\|+ \|AB\|\big).$$
Furthermore, we have
$$\|f(B)AR_4(B)\|_1\leq C_m  \|AB\|.$$
Therefore, 
$$\|R_4(A+B)-R_4(B)-f(B)A+f(B)AR_4(B)\|_1\leq C_m \big(\|A^2\|+ \|AB\|\big),$$
which implies
\begin{equation}
\label{pdet1}
|\det(I-R_4(A+B))-\det(I-R_4(B))\det(I-f(B)A)|\leq C_m \big(\|A^2\|+ \|AB\|\big).
\end{equation}
To conclude the proof of  \eqref{detyest6},   it remains to observe that
$$|\det(I-f(B)A)-1|=|\det(I-Af(B))-1|\leq C_m\|AB\|.$$

 \end{proof} 

\section{Proof of the estimate\refeq{derG}}\label{proofderG}
According to\refeq{defS}, we  have    \begin{equation} 
\label{calculder} \frac d {dx}G_{\lam} (\eta)  = \big(\lam^2-\overline \lam^2\big) \frac {\big[|\eta_1|^2\frac d {dx} (|\eta_2|^2)-|\eta_2|^2\frac d {dx} (|\eta_1|^2)\big]} {d_\lam^2(\eta)}\,  \cdotp    \end{equation} 
Since, in view of\refeq{sp}, 
\begin{eqnarray*}  \partial_x \eta_1&= & -i \lam^2 \eta_1+ \lam u \eta_2 \\ \partial_x \eta_2 &= &  \,   \,  \, i \lam^2 \eta_2 -  \lam \overline u \eta_1   ,\end{eqnarray*}
we infer that 
$$ |\eta_1|^2\frac d {dx} (|\eta_2|^2)= 2  |\eta_1|^2 {\rm Re}\big(   i \lam^2 |\eta_2|^2 -  \lam \overline u \eta_1  \overline \eta_2 \big),$$
and
$$ |\eta_2|^2\frac d {dx} (|\eta_1|^2)= 2  |\eta_2|^2 {\rm Re}\big(  - i \lam^2 |\eta_1|^2 +  \lam   u \eta_2  \overline \eta_1 \big) .$$
Therefore,
$$\begin{aligned}  |\eta_1|^2\frac d {dx} (|\eta_2|^2)-|\eta_2|^2\frac d {dx} (|\eta_1|^2)&= 2i(\lam^2-\bar \lam^2)|\eta_1|^2  |\eta_2|^2
\\  & - u\bar \eta_1\eta_2d_{\bar \lam}(\eta)-\bar u\eta_1\bar \eta_2d_\lam(\eta),
 \end{aligned}$$
which implies that 
\begin{equation}\label{AA}
  \frac d {dx}G_{\lam} (\eta) = -\frac{i}2G_\lam(\eta)\left(\left|\cS_\lam(\eta)\right|^2-uG_\lam(\eta)\overline{\cS_\lam(\eta)}
- \overline{uG_\lam(\eta)}\cS_\lam(\eta)\right).
\end{equation}
Taking into account that $|G_\lam(\eta)| = 1$ and
$ |\cS_{\lam} (\eta)|  \leq 4\im \lam$, we obtain
 $$\Big|\frac d {dx}G_{\lam} (\eta) (x)\Big| \leq 8 (\im\lam)^2 + 4\im(\lam) |u(x)|.$$
 
\begin{remark} \label{remap}
  {\sl  Assuming that $\eta\in L^2(\R,\C^2)\setminus\{0\}$, one easily deduces from \eqref{AA} that
  $$\|\cB_\lam(\eta)u\|_{L^2(\R)}^2=\|u\|_{L^2(\R)}^2-8\arg \lam.$$ }
  \end{remark}

\end{document}